\newtheorem*{rep@theorem}{\rep@title}
\newcommand{\newreptheorem}[2]{%
\newenvironment{rep#1}[1]{%
 \def\rep@title{#2 \ref{##1}}%
 \begin{rep@theorem}}%
 {\end{rep@theorem}}}
\newtheorem{lemma}{Lemma}[section]
\newtheorem{thm}[lemma]{Theorem}
\newtheorem{prop}[lemma]{Proposition}
\newtheorem{cor}[lemma]{Corollary}
\theoremstyle{definition}
\newtheorem{defi}[lemma]{Definition}
\newtheorem{quest}[lemma]{Question}
\newtheorem{example}[lemma]{Example}
\newtheorem{rem}[lemma]{Remark}
\newtheorem{setup}[lemma]{Setup}
\theoremstyle{definition}
\newcommand\norm{\bBigg@{0.8}}
 \newcommand{\indnorm}[2][flex]{\csname #1l\endcsname\|#2%
                                 \csname #1r\endcsname\|\mathclose{}}
                                  \newcommand{\indnorml}[4][flex]{\csname #1l\endcsname\|#2%
                                 \csname #1r\endcsname\|_{#3}^{#4}\mathclose{}}
\newcommand{\genrel}[3][flex]{\csname #1l\endcsname\langle #2 \mathbin{\csname #1m\endcsname|} #3\csname #1r\endcsname\rangle}
\DeclareMathOperator{\Lex}{\sf{Lex}}
\DeclareMathOperator{\QM}{QM}
\DeclareMathOperator{\Map}{Map}
\DeclareMathOperator{\comp}{comp}
\DeclareMathOperator{\id}{id}
\DeclareMathOperator{\cat0}{CAT(0)}
\newcommand{\HH}{\operatorname{H}}
\newcommand{\CC}{\operatorname{C}}
\def\linf{\ell^{\infty}}
\newcommand{\es}{\varepsilon_s}
\newcommand{\er}{\varepsilon_r}
\newcommand{\h}{\mathcal{H}}
\newcommand{\g}{g}
\newcommand{\N}{\ensuremath {\mathbb{N}}}
\newcommand{\R} {\ensuremath {\mathbb{R}}}
\newcommand{\Z} {\ensuremath {\mathbb{Z}}}
\renewcommand{\rho}{\varrho}
\def\phi{\varphi}
\def\lone{\ell^1}
\def\args{\;\cdot\;}
\def\actson{\curvearrowright}
\newcommand{\ls}{\left\{}
\newcommand{\rs}{\right\}}
\newcommand{\median}{{m}}
\newcommand{\signedlabels}{\lambda^\pm}
\newcommand{\len}{\operatorname{len}}
\def\vertx#1{%
  \fill #1 circle (0.07);}
\def\hollowvertx#1{%
  \filldraw[fill=white] #1 circle (0.07);}
\def\hedgecolor{%
  purple}
\def\hedge#1#2{%
  \begin{scope}[\hedgecolor]
    \draw[line width={0.07cm}] #1 -- #2;
    \vertx{#1}
    \vertx{#2}
  \end{scope}}
\def\quartertree{%
  \begin{scope}[shift={(120:1)}]
    \vertx{(0,0)}
    \draw (0,0) -- +(75:0.5);
    \draw (0,0) -- +(165:0.5);
  \end{scope}
}
\def\halftree{%
  \begin{scope}[shift={(-0.5,0)}]
    \draw (0,0) -- +(0.5,0);
    \draw (0,0) -- +(120:1);
    \draw (0,0) -- +(240:1);
    \vertx{(0,0)}
    \quartertree
    \begin{scope}[rotate={120}]
      \quartertree
    \end{scope}
  \end{scope}
}
\def\tree{%
  \halftree
  \begin{scope}[rotate={180}]
    \halftree
  \end{scope}
}
\def\squaregrid{%
  \foreach \j in {-1.5,-0.5,0.5,1.5} {%
    \draw (\j,-1.5) -- (\j,1.5);
    \foreach \k in {-1,0,1} {%
      \vertx{(\j,\k)}
    }
  }
  \foreach \k in {-1,0,1} {%
    \draw (-2,\k) -- (2,\k);
  }
}
\def\halfspacecolor{black!30!blue}
\def\shalfspacecolor{yellow!20!red}
\def\halfspaceshade{%
  \begin{scope}[color=\halfspacecolor, opacity=0.1, line width=1mm]
    \clip (-2,0) rectangle +(4,1);
    \foreach \j in {-2.6,-2.35,...,2.6} {%
      \draw (\j,0.2) -- +(60:1);
    }
  \end{scope}}
\def\halfspaces#1#2#3{%
  \begin{scope}[\halfspacecolor]
  \begin{scope}[shift={#1}]
    \begin{scope}[rotate={#2}]
      \draw (-2,0.2) -- (2,0.2);
      \halfspaceshade
      \draw (-2,-0.2) -- (2,-0.2);
      \begin{scope}[shift={(0,-1.2)}]
        \halfspaceshade
      \end{scope}
      \draw (2.1,0.4) node {\smash{$#3$}};
      \draw (2.1,-0.4) node {\smash{$\overline{#3}$}};
    \end{scope}
  \end{scope}
  \end{scope}
}
\def\halfspace#1#2#3{%
  \begin{scope}[\halfspacecolor]
  \begin{scope}[shift={#1}]
    \begin{scope}[rotate={#2}]
      \draw (-2,0.2) -- (2,0.2);
      \halfspaceshade
      \draw (2.1,0.4) node {\smash{$#3$}};
    \end{scope}
  \end{scope}
  \end{scope}
}
\def\geodesiccolor{blue!50!green}
\long\def\forget#1{}
\def\longrightarrow{\rightarrow}
\def\longmapsto{\mapsto}
\def\widetilde{\tilde}
\begin{document}

\title[Median quasimorphisms and their cup products]%
      {\makebox[0pt]{Median quasimorphisms on $\boldsymbol{\cat0}$ cube complexes}\\
        and their cup products}

\author[]{Benjamin Br\"uck}
\address{Department for Mathematical Logic and Foundational Research, University of M\"unster, Germany}
\email{benjamin.brueck@uni-muenster.de}

\author[]{Francesco Fournier-Facio}
\address{Department of Pure Mathematics and Mathematical Statistics, University of Cambridge, UK}
\email{ff373@cam.ac.uk}

\author[]{Clara L\"oh}
\address{Fakult\"{a}t f\"{u}r Mathematik, Universit\"{a}t Regensburg, Regensburg, Germany}
\email{clara.loeh@ur.de}

\thanks{}

\keywords{bounded cohomology, cup product, $\cat0$ cube complexes, quasimorphisms, RAAGs}
\subjclass[2020]{Primary: 18G90, 20E08, 20F67, 20F65}

\date{\today.\ 
  Benjamin Br\"uck was supported by the Deutsche Forschungsgemeinschaft (DFG, German Research Foundation) under Germany's Excellence Strategy EXC 2044 - 390685587, Mathematics M\"unster: Dynamics-Geometry-Structure.  
  Francesco Fournier-Facio has been supported by the Herchel Smith Postdoctoral Fellowship Fund.
  Clara L\"oh has been supported by the CRC~1085 \emph{Higher Invariants}
  (Universit\"at Regensburg, funded by the~DFG)}

\phantom{.}
\vspace{-\baselineskip}

\maketitle

\vspace{-1.2\baselineskip}

\begin{abstract}
  Cup products provide a natural approach to access higher bounded
  cohomology groups.  We extend vanishing results on cup products of
  Brooks quasimorphisms of free groups to cup products of median
  quasimorphisms, i.e., Brooks-type quasimorphisms of group actions on
  $\cat0$ cube complexes. In particular, we obtain such vanishing
  results for groups acting on trees and for right-angled Artin
  groups.  Moreover, we outline potential applications of vanishing
  results for cup products in bounded cohomology.
\end{abstract}

\section{Introduction}

The aim of this article is to extend vanishing results for cup
products in the bounded cohomology of free groups to group actions on
$\cat0$ cube complexes. We first explain the context of these results.

Bounded cohomology of groups is the cohomology of the complex of
bounded equivariant functions on the simplicial resolution
(Section~\ref{s:bc}). The boundedness condition enables exciting
applications to the geometry of manifolds~\cite{vbc}, stable
commutator length~\cite{Calegari}, group actions on the
circle~\cite{Ghys}, and rigidity theory~\cite{rigidity}. However,
boundedness causes a lack of excision; therefore, despite a
functional-analytic version of the resolution calculus~\cite{Ivanov},
bounded cohomology remains difficult to compute in general.

A key example is given by non-abelian free groups~$F$. The bounded
cohomology of (the one-dimensional groups!)~$F$ is known to be
non-trivial in degrees~$2$ (via quasimorphisms, Section~\ref{ss:qm})
and~$3$ (via hyperbolic geometry~\cite{Soma}). However, it is unknown
whether the higher-degree bounded cohomology~$\HH^n_b(F;\R)$
for~$n\geq 4$ is non-trivial~\cite[Question~16.3]{BIMW}. A related open problem is to decide
whether epimorphisms of groups induce injective maps on the level of
bounded cohomology in all degrees.  An affirmative answer to the
latter question would lead to non-triviality of the bounded cohomology
of non-abelian free groups in degrees~$\geq 4$, which in turn would
have consequences for large classes of geometrically defined 
groups~\cite{Hullosin,FPS}.

The cup product on bounded cohomology sheds some light on both
problems by producing classes in high degrees from lower degrees (for
the second problem, this is explained in Appendix~\ref{appx:lex}). A
natural starting point are non-trivial classes in degree~$2$: Bounded
cohomology in degree~$2$ is closely related to quasimorphisms, which
are relevant in geometric group theory~\cite{Fujiwara:mcg}, knot
theory~\cite{Malyutin}, and symplectic geometry~\cite{symplectic}.  More
precisely, if $f \colon \Gamma \to \R$ is a quasimorphism of a
group~$\Gamma$, then the simplicial cochain~$\widehat f$ associated
with~$f$ defines a class~$[\delta^1 \widehat f]$
in~$\HH^2_b(\Gamma;\R)$ (Section~\ref{ss:qm}).

\subsection{Brooks quasimorphisms}

For free groups, all bounded cohomology classes in degree~$2$ come
from quasimorphisms, and a key example of quasimorphisms on free groups
are Brooks quasimorphisms~\cite{Brooks} (Example~\ref{exa:brooks}).
The cup product on the bounded cohomology of free groups was studied
independently by Bucher--Monod~\cite{BM:cup} and
Heuer~\cite{Heuer:cup}, who proved that many cup products of
quasimorphism classes are trivial. Recently, Amontova--Bucher combined
these techniques:

\begin{thm}[Amontova--Bucher~\cite{AB:cup}]
  Let $F$ be a free group, let $w \in F$ and let $H_w$ be the
  corresponding (big) Brooks quasimorphism. Then, for every~$n \geq 1$ and every~$\zeta \in
  \HH^n_b(F; \R)$, the cup
  product~$[\delta^1 \widehat{H_w}] \cup \zeta$ is trivial
  in~$\HH_b^{2+n}(F;\R)$.
\end{thm}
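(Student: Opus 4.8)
The plan is to work in the resolution of $\HH^*_b(F;\R)$ coming from the tree. Let $T$ be the Cayley graph of $F$ with respect to a free generating set; $F$ acts freely on the tree $T$, and the augmented cochain complex $\R\to\linf(T)\to\linf(T^2)\to\cdots$ carries the bounded contracting homotopy obtained by coning at a fixed basepoint, so $\linf(T^{*+1})$ is a strong relatively injective $F$-resolution of~$\R$; hence $\HH^*_b(F;\R)$ is the cohomology of $\linf(T^{*+1})^F$, with cup product $(a\cup b)(x_0,\dots,x_{p+q})=a(x_0,\dots,x_p)\,b(x_p,\dots,x_{p+q})$. In this model $[\delta^1\widehat{H_w}]$ is represented by the bounded $F$-invariant cocycle $\omega_w:=\delta^1 c_w$, where $c_w(x,y)$ is the signed number of occurrences of $w$ along the reduced edge-path from $x$ to $y$; the $1$-cochain $c_w$ is $F$-invariant but unbounded, whereas $\omega_w$ is bounded by the familiar tripod estimate. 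Fixing a bounded $F$-invariant cocycle $z$ representing $\zeta$, the class $[\delta^1\widehat{H_w}]\cup\zeta$ is represented by $\omega_w\cup z=\delta(c_w\cup z)$, and everything comes down to replacing the unbounded primitive $c_w\cup z$ by a bounded, $F$-invariant one.

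The next step is to break $c_w$ into geometric pieces. For a reduced edge-path $P$ of length $|w|$ in $T$ spelling~$w$ -- a $w$-\emph{placement} -- let $\phi_P\colon T^2\to\{0,\pm1\}$ record whether the geodesic from $x$ to $y$ traverses $P$ forwards ($+1$) or backwards ($-1$); since $w$ is reduced, $\phi_P(x,y)=\mathbf 1_{h^-}(x)\mathbf 1_{h^+}(y)-\mathbf 1_{h^-}(y)\mathbf 1_{h^+}(x)$, where $h^-$ and $h^+$ are the two disjoint half-trees containing the endpoints of $P$, obtained by cutting at the extreme edges of $P$. The $\phi_P$ are not $F$-invariant individually, but $c_w=\sum_P\phi_P$ is, with locally finite sum. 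A tripod analysis shows that $\delta^1\phi_P(x_0,x_1,x_2)\neq 0$ only when $P$ straddles the median $m(x_0,x_1,x_2)$, so that $m(x_0,x_1,x_2)$ is then an interior vertex of $P$, and that on this locus $\delta^1\phi_P\in\{\pm1\}$. As each vertex of $T$ is interior to at most $|w|-1$ distinct $w$-placements, $\omega_w=\sum_P\delta^1\phi_P$ is a \emph{boundedly} locally finite sum of $\{0,\pm1\}$-valued cocycles with uniformly small support.

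The heart of the proof is to carry this through after cupping with $z$: one has $\omega_w\cup z=\sum_P(\delta^1\phi_P)\cup z$, a boundedly locally finite sum of bounded cocycles, each supported on tuples with $m(x_0,x_1,x_2)$ on~$P$, and it suffices to find for each $P$ a \emph{bounded} primitive $b_P$ of $(\delta^1\phi_P)\cup z$ that is itself supported on tuples controlled by~$P$; for then $b:=\sum_P b_P$ is boundedly locally finite, hence bounded and $F$-invariant, with $\delta b=\omega_w\cup z$, whence $[\delta^1\widehat{H_w}]\cup\zeta=[\delta b]=0$. The obvious primitive $\phi_P\cup z$ is bounded but not localized (it survives whenever $P$ is any subpath of the geodesic $[x_0,x_1]$). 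Using the gate projection $\rho_P\colon T\to B_P$ onto a bounded neighbourhood $B_P$ of $P$ together with $\delta z=0$, one can produce the cochain $\phi_P\cup(z\circ\rho_P)$, whose coboundary collapses -- via the alternating $z$-identity -- to $(\delta^1\phi_P)\cup(z\circ\rho_P)$, supported where $m(x_0,x_1,x_2)$ lies on~$P$; but the support of $\phi_P\cup(z\circ\rho_P)$ is still not under control, and $z\circ\rho_P$ is not~$z$.

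\textbf{The main obstacle} is to turn this into a genuinely localized bounded primitive, i.e.\ to handle two residual discrepancies at once: truncating $\phi_P\cup(z\circ\rho_P)$ to tuples near~$P$ (which produces boundary error terms) and correcting $z-z\circ\rho_P$. Both corrections are again bounded cocycles, presented as boundedly locally finite sums of pieces supported near the various placements, so the construction recurses. I would try to close it either by an induction on the degree~$n$ -- feeding these residual cocycles back into the same argument -- or, probably more cleanly, by passing to the Poisson-boundary resolution $\linf((\partial F)^{*+1})^F$: there $[\delta^1\widehat{H_w}]$ is represented by the explicit Brooks boundary cocycle counting $w$-placements that straddle the tripod centre $m(\xi_0,\xi_1,\xi_2)$, and one can hope to write the bounded primitive directly as an average $\int\!\!\int c_w(\xi_0,\xi_1\mid\eta_0,\eta_1)\,z(\xi_1,\dots,\xi_{n+1})\,d\nu(\eta_0)\,d\nu(\eta_1)$ over two auxiliary boundary points of the counting function truncated to the geodesic window they cut out, integrability coming from an exponential-decay (Gromov-product) estimate for the canonical boundary measure~$\nu$. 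In either approach it is this residual term -- the one place where the overlap combinatorics of $w$ genuinely enters -- that I expect to be the decisive difficulty.
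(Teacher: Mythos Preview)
Your setup is right: working in the tree resolution~$\ell^\infty(T^{*+1})^F$, recognising that $[\delta^1\widehat{H_w}]\cup\zeta$ is represented by~$\delta(c_w\cup z)$, and that the task is to replace the unbounded primitive~$c_w\cup z$ by a bounded $F$-invariant one. But from that point on you diverge from the paper's argument and, as you yourself note, you do not close the proof.

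The paper does \emph{not} decompose~$c_w$ into individual placement pieces~$\phi_P$ and then try to localise each primitive separately. That decomposition is what creates your recursion problem: once you localise~$\phi_P\cup z$ you generate residual bounded cocycles that again need bounded primitives, and you have no induction to close. Instead, the paper adds a single global correction of the form~$\delta^n\eta$ to~$c_w\cup z$, where
\[
  \eta(x_0,\dots,x_n)
  \;=\!\!\sum_{t\in[x_0,x_1]_{\h}^{(|w|)}}\tilde z(t,x_1,\dots,x_n),
  \qquad
  \tilde z(t,x_1,\dots,x_n)
  =\tfrac{1}{2}\,\varepsilon_s(t)\bigl(z(\alpha(t),x_1,\dots,x_n)+z(\omega(t),x_1,\dots,x_n)\bigr),
\]
with~$\alpha(t),\omega(t)$ the two endpoints of the segment~$t$. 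The point is that the cocycle identity~$\delta^n z=0$, applied once with first argument~$\alpha(t)$ and once with~$\omega(t)$, lets you rewrite each summand~$\varepsilon_s(t)\,z(x_1,\dots,x_{n+1})$ of~$c_w\cup z$ in terms of~$\tilde z$; after this substitution, the expression~$c_w\cup z+\delta^n\eta$ collapses to three sums of~$\tilde z$ over the intervals~$[x_0,x_1]_\h$, $[x_1,x_2]_\h$, $[x_0,x_2]_\h$, and splitting each at the median~$m(x_0,x_1,x_2)$ gives pairwise cancellation up to the boundedly many (at most~$|w|-1$ per interval) segments containing~$m$ in their interior. No recursion, no boundary measures, no exponential estimates.

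Your gate-projection idea~$z\circ\rho_P$ is a shadow of this --- it is trying to evaluate~$z$ at points attached to~$P$ rather than at~$x_1$ --- but the correct choice is simply the two endpoints of each occurrence, averaged, and the correction should be packaged as a single~$\delta^n\eta$ rather than piece by piece. Once you see that, the whole argument is a half-page computation with no residual terms.
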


These results partially generalise to other types of quasimorphisms on
free groups~\cite{Heuer:cup,Fournier:cup,AB:cup}, but it is not clear
whether they hold for \emph{all} quasimorphisms on free groups.  In
the present paper, we will focus on Brooks quasimorphisms and their
generalisations.  Brooks quasimorphisms and their bounded cohomology
classes have been generalised in many
directions~\cite{Fujiwara:hyperbolic, Fujiwara:hyperbolic2,
  Fujiwara:amalgamated, Grigorchuk:amalgamated, Fujiwara:mcg, FFT}.
Thus, the following (vague) question arises:

\begin{quest}\label{q}
  Which Brooks-type quasimorphisms produce trivial cup products in
  bounded cohomology?
\end{quest}

For example, this was recently answered for de~Rahm
quasimorphisms on surface groups, a continuous analogue of
Brooks quasimorphisms~\cite{surface:cup}.

In the present paper, we answer Question~\ref{q} for \emph{median
  quasimorphisms}, which are Brooks-type quasimorphisms, defined for
groups acting on $\cat0$ cube complexes.  We use the framework of
equivariant bounded cohomology~\cite{Loehsauer, Li} to state our
results in a more general, vastly applicable way.

\begin{rem}
  Before moving forward with the statements of results, we point
  out that cup products in bounded cohomology can very well be
  non-trivial.  E.g., this happens for cup powers of the
  Euler class of Thompson's group~$T$ \cite{rigidity, cohoT,binate,
    Monodnariman} and for certain direct
  products~\cite{clara,bcfp}. In particular, this can occur for
  groups acting on $\cat0$ cube complexes (Example~\ref{ex:FxF}).
\end{rem}

\subsection{Groups acting on trees}

We start by stating our results for groups acting on trees, where the
statements are stronger and parallel the case of the free group more
closely.  Given an action of a group~$\Gamma$ on a simplicial
tree~$T$, an oriented geodesic segment~$s$ in~$T$, and a base
vertex~$x$, one associates a counting quasimorphism, the \emph{median
  quasimorphism}~$f_{s, x} \colon \Gamma \to \R$ (see
Section~\ref{s:trees} for the relevant definitions). These
quasimorphisms were defined by Monod and Shalom~\cite{Monodshalom} and
are a dynamical analogue of Brooks quasimorphisms: If $\Gamma$ is a
free group, $T$ is a Cayley tree of~$\Gamma$, and $s$ is the geodesic
segment~$[e, w]$ in~$T$ for~$w \in \Gamma$, then the median
quasimorphism~$f_{s,e}$ coincides with the Brooks quasimorphism~$H_w$.
These quasimorphisms have proven very useful in the study of rigidity
properties of groups acting on trees~\cite{Monodshalom, Monodshalom2}
and they admit an easy-to-check dynamical criterion for
triviality~\cite{IPS}.  Since the equivariant point of view is more
natural for our purposes, we consider \emph{median
  quasimorphisms}~$f_s$ of the action~$\Gamma \actson T$, which
dispense of the choice of base vertex
(Definition~\ref{defi:qmaction}).

Our first main result is the answer to Question~\ref{q} for median
quasimorphisms on groups acting on trees:

\begin{thm}[Theorem~\ref{thm:maintree}]\label{intro:thm:maintree}
  Let $\Gamma \actson T$ be a group action on a tree~$T$.  Let $s$ be
  an oriented geodesic segment in~$T$, and let $f_s$ be the
  corresponding median quasimorphism of~$\Gamma \actson T$.  Then, for
  every~$n \geq 1$ and every~$\zeta \in \HH^n_{\Gamma, b}(T; \R)$, the
  cup product~$[\delta^1 f_s] \cup \zeta \in
  \HH_{\Gamma,b}^{2+n}(T;\R)$ is trivial.
\end{thm}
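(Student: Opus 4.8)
The plan is to realize the median quasimorphism's bounded cohomology class at the cochain level in a way that makes the cup product computation manageable, following the Bucher--Monod/Heuer strategy adapted to the equivariant setting. First I would recall the defining cocycle for $f_s$: for a triple of vertices (or, after passing to the equivariant resolution, an equivariant function of triples of vertices of $T$) one writes $\delta^1 f_s$ in terms of signed counts of how the oriented segment $s$ appears along the tripod spanned by the three points. The key structural input is that on a tree the median of three points is a single vertex, and the tripod decomposes into three geodesic rays meeting at the median; the crucial combinatorial fact is that any occurrence of $s$ (or $\overline s$) inside the tripod is "seen'' by exactly the paths from the median outward, so $\delta^1 f_s$ can be rewritten as a telescoping expression supported on the three legs. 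This is precisely the phenomenon that makes Brooks quasimorphisms' coboundaries "decomposable''.

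Next I would set up the cup product. Given $\zeta \in \HH^n_{\Gamma,b}(T;\R)$, represent it by a bounded equivariant cocycle $c$ on $(n+1)$-tuples of vertices. The cup product $[\delta^1 f_s] \cup \zeta$ is represented by $(x_0,\dots,x_{n+2}) \mapsto (\delta^1 f_s)(x_0,x_1,x_2)\cdot c(x_2,\dots,x_{n+2})$. The goal is to produce a bounded equivariant $(n+1)$-cochain $b$ with $\delta b$ equal to this product. The idea is to use the telescoping/decomposition of $\delta^1 f_s$ on the tripod $(x_0,x_1,x_2)$ to split the product into pieces, each of which looks like a difference of simpler terms; summing these against $c$ and using that $c$ is itself a cocycle should collapse most of the sum, leaving a coboundary. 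Concretely I expect $b$ to be built from expressions of the form $f_s(x_0,x_1)\cdot c(x_1,x_2,\dots,x_{n+1})$ together with correction terms that account for the median, and one then checks $\delta b$ term by term using the cocycle identity for $c$ and the additivity of $f_s$ along geodesics in $T$ (a genuine equality on a tree, not just a coarse one). The boundedness of $b$ follows from boundedness of $c$ together with the uniform bound $|\delta^1 f_s| \le 3\,D(f_s)$ on the defect.

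The main obstacle, as in the non-equivariant case, is handling the median vertex cleanly: $\delta^1 f_s(x_0,x_1,x_2)$ is not simply $f_s(x_0,x_1) + f_s(x_1,x_2) - f_s(x_0,x_2)$ made to vanish — there is a genuine contribution coming from occurrences of $s$ that straddle the branch point, and these are exactly what obstruct naive telescoping. The trick (this is where the "big'' version of the quasimorphism matters) is to enlarge the alphabet so that any such straddling occurrence is counted with a sign that cancels when one sums over the three legs of the tripod; equivalently, one works with a cochain that records $s$-occurrences in a "boundary-sensitive'' way. I would spend most of the effort making this cancellation precise and equivariant, i.e. checking that all the auxiliary cochains can be chosen $\Gamma$-equivariantly (which is automatic since everything is defined from $s$ and the tree metric, both $\Gamma$-equivariant up to the orbit of $s$) and bounded. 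Once the decomposition of $\delta^1 f_s$ on tripods is in hand, the cup product vanishing is a formal consequence of combining it with the cocycle relation for $\zeta$ and summing, so the heart of the argument is the tree combinatorics around the median.
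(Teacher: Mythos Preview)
Your framework is right---write the primitive as $\beta = f_s \cup c + (\text{correction})$ and verify $\delta\beta = \delta^1 f_s \cup c$---but the proposal has a real gap at the point you flag as routine. You write that ``the boundedness of $b$ follows from boundedness of $c$ together with the uniform bound $|\delta^1 f_s| \le 3\,D(f_s)$.'' This is false as stated: $f_s$ itself is unbounded, so $f_s \cup c$ is unbounded, and the defect bound on $\delta^1 f_s$ says nothing about the size of the correction you need. Boundedness of $\beta$ is not a consequence of the setup; it is the entire content of the argument, and it requires a specific choice of correction term that you have not identified.

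The paper's choice is this: for each translate $t$ of $s$ (or $\bar s$) appearing in $[x_0,x_1]$, evaluate $c$ not at $x_0$ or $x_1$ but at the \emph{endpoints} $\alpha(t),\omega(t)$ of the segment $t$ itself, and set
\[
\eta(x_0,\dots,x_n)=\sum_{t\in[x_0,x_1]^{(l)}_{\mathcal H}}\tfrac12\,\varepsilon_s(t)\bigl(c(\alpha(t),x_1,\dots,x_n)+c(\omega(t),x_1,\dots,x_n)\bigr).
\]
One then expands $\beta=f_s\cup c+\delta\eta$ and uses the cocycle identity for $c$ twice, at $y=\alpha(t)$ and $y=\omega(t)$, to rewrite each summand $\varepsilon_s(t)\,c(x_1,\dots,x_{n+1})$ in terms of these endpoint evaluations. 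After this substitution, the sums over $[x_0,x_1]$, $[x_1,x_2]$, $[x_0,x_2]$ cancel against each other via the median decomposition $[x_i,x_j]=[x_i,m]\sqcup[m,x_j]$, leaving only the segments that straddle $m$---of which there are at most $l-1$. That bound, times $\|c\|_\infty$, controls $\|\beta\|_\infty$. Your discussion of ``enlarging the alphabet'' and sign cancellation over the three legs does not capture this mechanism; the cancellation comes from the antisymmetry $\tilde c(t,\cdot)=-\tilde c(\bar t,\cdot)$ of the endpoint-evaluation cochain, not from any refinement of how occurrences are counted.
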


Here $\HH^n_{\Gamma, b}(T; \R)$ denotes the $\Gamma$-equivariant
bounded cohomology of~$T$, which we tacitly identify with its vertex
set.  Upon choosing a base vertex $x \in T$ we obtain the median
quasimorphism $f_{s, x}$, and then Theorem~\ref{intro:thm:maintree}
states that the corresponding class~$[\delta^1 \widehat{f_{s, x}}] \in
\HH^2_b(\Gamma; \R)$ has trivial cup product with every class that can
also be described via the action of~$\Gamma$ on~$T$.  In the case of
amenable stabilisers, all classes in~$\HH^*_b(\Gamma;\R)$ are of this
form:

\begin{cor}[Corollary~\ref{cor:maintreegroup}]
\label{intro:cor:maintree}
  With the notation of Theorem~\ref{intro:thm:maintree}, suppose that
  vertex stabilizers are amenable and let $x$ be a base vertex.  Then,
  for every~$n \geq 1$ and every~$\zeta \in \HH^n_b(\Gamma; \R)$, the
  cup product~$[\delta^1 \widehat{f_{s, x}}] \cup \zeta \in
  \HH_b^{2+n}(\Gamma;\R)$ is trivial.
\end{cor}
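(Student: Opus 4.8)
The plan is to deduce the corollary from Theorem~\ref{intro:thm:maintree} by upgrading the identification $\HH^*_{\Gamma,b}(T;\R)\cong\HH^*_b(\Gamma;\R)$ to an isomorphism of graded rings and then transporting the vanishing statement along it.

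First I would make the identification explicit. Since the vertex stabilizers of $\Gamma\actson T$ are amenable, and the stabilizer of an $(n+1)$-tuple of vertices (for the diagonal action) is the intersection of the corresponding vertex stabilizers, hence a subgroup of an amenable group and thus itself amenable, the $\Gamma$-set $T^{n+1}$ has amenable stabilizers for every~$n\geq 0$. Therefore $\ell^\infty(T^{\bullet+1};\R)$ is a strong resolution of the trivial module~$\R$ by relatively injective Banach $\Gamma$-modules, so it computes $\HH^*_b(\Gamma;\R)$; it also computes the equivariant bounded cohomology $\HH^*_{\Gamma,b}(T;\R)$ via the subcomplex $\ell^\infty(T^{\bullet+1};\R)^\Gamma$ of $\Gamma$-invariants. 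Restriction of cochains along the orbit map $\Gamma^{\bullet+1}\to T^{\bullet+1}$, $(\gamma_0,\dots,\gamma_\bullet)\mapsto(\gamma_0x,\dots,\gamma_\bullet x)$, is a $\Gamma$-cochain map over $\id_\R$ between these two strong relatively injective resolutions, hence by the fundamental lemma of bounded cohomology induces an isomorphism $\Phi\colon\HH^*_{\Gamma,b}(T;\R)\to\HH^*_b(\Gamma;\R)$. Because the cup product on $\ell^\infty(S^{\bullet+1};\R)$ is given by a front-face/back-face formula that is natural in the $\Gamma$-set~$S$, this restriction map is multiplicative, so $\Phi$ is an isomorphism of graded rings.

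Next I would identify the degree-two class: unwinding the definitions of $f_s$ and of $f_{s,x}$ (Section~\ref{s:trees}), the restriction along the orbit map of a cocycle representing $[\delta^1 f_s]$ is precisely $\delta^1\widehat{f_{s,x}}$ --- this is the formal counterpart of the statement in the introduction that $f_{s,x}$ is obtained from $f_s$ by specialising to the $\Gamma$-orbit of~$x$ --- so that $\Phi\bigl([\delta^1 f_s]\bigr)=[\delta^1\widehat{f_{s,x}}]$. Now fix $n\geq 1$ and $\zeta\in\HH^n_b(\Gamma;\R)$, and set $\zeta':=\Phi^{-1}(\zeta)\in\HH^n_{\Gamma,b}(T;\R)$. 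Then
\[
  [\delta^1\widehat{f_{s,x}}]\cup\zeta
  =\Phi\bigl([\delta^1 f_s]\bigr)\cup\Phi(\zeta')
  =\Phi\bigl([\delta^1 f_s]\cup\zeta'\bigr),
\]
and $[\delta^1 f_s]\cup\zeta'\in\HH^{2+n}_{\Gamma,b}(T;\R)$ vanishes by Theorem~\ref{intro:thm:maintree}. Hence $[\delta^1\widehat{f_{s,x}}]\cup\zeta=0$, as claimed.

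I expect the only genuine work to lie in the first two steps, i.e.\ in the homological algebra of equivariant bounded cohomology: verifying that amenability of the stabilizers makes $\ell^\infty(T^{\bullet+1};\R)$ a strong relatively injective resolution computing both theories, that the comparison isomorphism $\Phi$ respects cup products, and that it carries $[\delta^1 f_s]$ to $[\delta^1\widehat{f_{s,x}}]$. Granting these, the corollary follows at once from Theorem~\ref{intro:thm:maintree}.
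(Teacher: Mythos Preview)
Your proposal is correct and follows essentially the same route as the paper: the paper packages the three ingredients you spell out (the orbit map~$o_x^*$ is an isomorphism when stabilizers are amenable, it is compatible with cup products, and it sends $[\delta^1 f_s]$ to $[\delta^1\widehat{f_{s,x}}]$) as references to Theorem~\ref{thm:amenable:stab}, Section~\ref{ss:equivariant}, and Remark~\ref{rem:qmtogroups}, and then applies Theorem~\ref{thm:maintree} exactly as you do. Your exposition of the underlying homological algebra is more detailed than the paper's one-line citation, but the argument is the same.
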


Via Bass--Serre Theory, the class of groups to which
Corollary~\ref{intro:cor:maintree} applies coincides with fundamental
groups of graphs of groups with amenable vertex groups (see Remark
\ref{rem:gog}).  This includes the following examples, which
show that Theorem~\ref{intro:thm:maintree} applies much further than
to free groups:
  virtually free groups,
  Baumslag--Solitar groups,
  torus knot groups,
  generalized Baumslag--Solitar groups, 
  amalgamated products and HNN-extensions of amenable groups.

While our proof is based on the same blueprint as the proofs by
Bucher, Monod, Heuer, and Amontova, it has the advantage of avoiding
heavyweight combinatorics and the complex of aligned chains. In
particular, it admits a straightforward generalisation to actions on
$\cat0$ cube complexes (Section~\ref{ss:intro:ccc}). The aligned
cochain complex was introduced by Bucher and Monod~\cite{aligned}, and
it significantly simplifies computations in bounded cohomology of
groups acting on trees~\cite{BM:cup,AB:cup}. However, it is currently
unknown whether this complex can be generalised suitably to actions on
other objects~\cite{aligned}. Heuer's approach~\cite{Heuer:cup} is
based on involved combinatorics and might be cumbersome to adapt to
actions on higher-dimensional objects.

\subsection{Groups acting on $\boldsymbol{\cat0}$ cube complexes}\label{ss:intro:ccc}

Now we let $\Gamma$ act on a finite-dimensional $\cat0$ cube
complex~$X$.  One can generalize oriented geodesic segments in trees
to \emph{$\h$-segments}~\cite{CFI, FFT}, i.e., to sequences of tightly
nested halfspaces in~$X$.  We use such sequences to define
\emph{median quasimorphisms} on~$\Gamma$ analogously to the case of
trees (see Section \ref{s:ccc} for the relevant definitions).  These
are the overlapping version of the effective quasimorphisms on RAAGs
defined by Fern{\'o}s--Forrester--Tao~\cite{FFT}.  The generalization
is more subtle than it may look at first sight: Unlike the
non-overlapping case~\cite{FFT}, the natural definition does not lead
to a quasimorphism in general (Examples~\ref{exa:dunbounded_dim} and~\ref{exa:dunbounded_staircase}).
We show that we do obtain quasimorphisms under our running
assumption that the $\cat0$ cube complex~$X$ is finite-dimensional
and has finite staircase length (Proposition~\ref{prop:fs:qm}).

The situation for cup products is more delicate than in the case of
trees: We exhibit median quasimorphisms whose cup product is
\emph{not} trivial (Example~\ref{ex:FxF}). However, we can 
give sufficient conditions that ensure that the cup product vanishes.
Once again, we state the result for the corresponding median classes
in equivariant bounded cohomology; the definitions of the
quasimorphisms~$f_s$ and of non-transversality are given in
Section~\ref{ss:cupccc}.

\begin{thm}[Theorems~\ref{thm:mainccc} and~\ref{thm:cuptwomedian}]\label{intro:thm:mainccc}
  Let $\Gamma$ be a group acting on a finite-dimensional $\cat0$ cube
  complex~$X$ with finite staircase length.
  Let $s$ be an $\h$-segment in~$X$, and let $f_s$ be
  the corresponding median quasimorphism of~$\Gamma \actson X$.
  Then, for every class $\zeta \in
  \HH^n_{\Gamma, b}(X; \R)$ that is non-transverse to the orbit~$\Gamma s$, 
  the cup product~$[\delta^1 f_s] \cup \zeta \in \HH_{\Gamma,b}^{2+n}(X;\R)$
  is trivial.

  In particular, this holds if $\zeta = [\delta^1 f_r]$ for another
  $\h$-segment~$r$ and the first and last halfspaces of $s$ and~$r$
  have non-transverse orbits.
\end{thm}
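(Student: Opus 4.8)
\emph{Proof plan.} Following the blueprint of the tree case (Theorem~\ref{thm:maintree}), the plan is to exhibit an explicit bounded $\Gamma$-equivariant cochain whose coboundary represents $[\delta^1 f_s]\cup\zeta$, so that this class vanishes. Fix a bounded $\Gamma$-equivariant cocycle $c\colon X^{n+1}\to\R$ representing $\zeta$; by non-transversality of $\zeta$ to $\Gamma s$ we may arrange that $c$ is insensitive to the hyperplanes bounding the first and last halfspaces of the $\h$-segments in $\Gamma s$ — this is the form in which the hypothesis will be used, the precise condition being set up in Section~\ref{ss:cupccc}. The cup product is then represented by the bounded cocycle
\[
 \Omega(x_0,\dots,x_{n+2}) \;=\; \delta^1 f_s(x_0,x_1,x_2)\cdot c(x_2,\dots,x_{n+2}),
\]
and it suffices to produce a bounded $\Gamma$-equivariant $(n+1)$-cochain $\beta$ with $\delta\beta=\Omega$.

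The first step is to decompose $\delta^1 f_s$ over the orbit. Writing $f_s(x,y)=\sum_{\mathfrak s\in\Gamma s}\varepsilon_{\mathfrak s}(x,y)$ as a sum of signed appearance counts of the $\h$-segments $\mathfrak s=(\mathfrak h_1,\dots,\mathfrak h_\ell)$ in $\Gamma s$, one checks (exactly as for trees) that each elementary cocycle $\delta^1\varepsilon_{\mathfrak s}(x_0,x_1,x_2)$ depends only on the positions of $x_0,x_1,x_2$ relative to the first and last halfspaces $\mathfrak h_1,\mathfrak h_\ell$, is supported on triples whose median $m(x_0,x_1,x_2)$ lies strictly between $\mathfrak h_1$ and $\mathfrak h_\ell$, and is the coboundary of a bounded (but non-equivariant) $1$-cochain $\gamma_{\mathfrak s}$ assembled from the characteristic functions of $\mathfrak h_1$ and $\mathfrak h_\ell$. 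The hypotheses that $X$ is finite-dimensional and has finite staircase length — equivalently, that $f_s$ is a quasimorphism (Proposition~\ref{prop:fs:qm}) — guarantee that at every triple only boundedly many $\mathfrak s$ contribute, so that $\delta^1 f_s=\sum_{\mathfrak s}\delta^1\varepsilon_{\mathfrak s}$ really is a bounded cocycle.

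Since $\delta c=0$, the unbounded $(n+1)$-cochain $(f_s\cup c)(x_0,\dots,x_{n+1})=f_s(x_0,x_1)\,c(x_1,\dots,x_{n+1})$ already satisfies $\delta(f_s\cup c)=\Omega$; the task is to correct it by a $\Gamma$-equivariant coboundary $\delta\psi$ so that $\beta:=f_s\cup c-\delta\psi$ becomes bounded. The correction is built summand by summand: in the term of $f_s\cup c$ corresponding to $\mathfrak s$ one slides the argument $x_1$ of $c$ to a vertex $p(\mathfrak s)$ lying past the last halfspace of $\mathfrak s$, the resulting error being absorbed using $\delta c=0$; non-transversality of $\zeta$ to $\Gamma s$ is exactly what makes this slide cost nothing, so the infinite orbit sum collapses and $\delta\beta-\Omega$ is left supported, at each simplex, only on the boundedly many $\mathfrak s$ whose last halfspace separates $x_0$ from $x_1$ in the way forced by the median condition — at which point a direct check shows the residue vanishes. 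For the final assertion, with $\zeta=[\delta^1 f_r]$ the first step shows that $\delta^1 f_r$ is a cocycle whose values depend only on the positions relative to the first and last halfspaces of translates of $r$; as these have orbits non-transverse to the first and last halfspaces of $s$, this representative exhibits $[\delta^1 f_r]$ as non-transverse to $\Gamma s$, and the general case applies (Theorem~\ref{thm:cuptwomedian}).

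The main obstacle I expect is the construction of the correcting cochain — that is, making $\beta$ at once bounded and $\Gamma$-equivariant. In the tree case an $\h$-segment \emph{is} an oriented geodesic and one simply substitutes its terminal vertex; an $\h$-segment in a $\cat0$ cube complex has no canonical endpoint, so the substitution must be organised around the orbits of the first and last halfspaces, and the combinatorics showing that the orbit sum collapses to a bounded residue must now cope with the transverse hyperplanes absent from trees. This is also where the hypotheses are indispensable: dropping non-transversality destroys the collapse (Example~\ref{ex:FxF}), and taming the surviving staircase configurations is precisely what finite staircase length is for.
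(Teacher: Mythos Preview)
Your overall strategy matches the paper's: represent~$\zeta$ by a non-transverse cocycle~$\kappa$, observe that the unbounded cochain~$f_s\cup\kappa$ already has coboundary~$\delta^1 f_s\cup\kappa$, and correct it by a coboundary~$\delta\eta$ so that~$\beta=f_s\cup\kappa+\delta\eta$ is bounded. The use of the median decomposition and of Lemma~\ref{lem:fd:segments:fix} to bound the residual terms is also exactly what the paper does.

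However, there is a real gap in your proposed construction of the correcting cochain. You propose to ``slide~$x_1$ to a vertex~$p(\mathfrak s)$ lying past the last halfspace of~$\mathfrak s$,'' i.e., to replace each term~$\es(t)\kappa(x_1,\dots)$ by~$\es(t)\kappa(\omega,\dots)$ for a tail~$\omega$ of~$t$. This asymmetric substitution does not work: after the median decomposition the proof needs the antisymmetry~$\widetilde\kappa(t,\dots)=-\widetilde\kappa(\bar t,\dots)$ to cancel the two sums over~$[m,x_1]_{\h}^{(l)}$ and~$[x_1,m]_{\h}^{(l)}$. With only a tail substitution one gets~$\widetilde\kappa(\bar t,\dots)=-\es(t)\kappa(\alpha,\dots)$ (since~$\omega(\bar t)\in\alpha(t)$), and there is no reason for~$\kappa(\alpha,\dots)$ to equal~$\kappa(\omega,\dots)$. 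The paper's fix is to use the \emph{symmetrised} substitution
\[
\widetilde\kappa(t,x_1,\dots,x_n)\;=\;\tfrac12\,\es(t)\bigl(\kappa(\alpha,x_1,\dots,x_n)+\kappa(\omega,x_1,\dots,x_n)\bigr),
\]
which is well-defined precisely because non-transversality makes~$\kappa(\alpha,\cdot)$ and~$\kappa(\omega,\cdot)$ independent of the choice of head and tail, and which visibly satisfies the required antisymmetry. This is the missing idea; your remark that ``non-transversality is exactly what makes this slide cost nothing'' misidentifies its role --- non-transversality does not make the slide from~$x_1$ to a tail free, it makes the averaged substitution \emph{well-defined}.

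For the ``in particular'' clause, your claim that~$\delta^1 f_r$ ``depends only on the positions relative to the first and last halfspaces of translates of~$r$'' is not the form of non-transversality required by Definition~\ref{defi:nontransverse}: one must show that~$f_r(\alpha,x)$ is constant over~$\alpha\in\alpha(t)$ for each~$t\in\Gamma s$. The paper does this via the carrier lemma (Lemma~\ref{lem:carrier}): any two heads~$\alpha,\alpha'$ lie in the carrier of the first hyperplane of~$t$, so every hyperplane separating them is transverse to it; by the orbit non-transversality hypothesis none of these can be a translate of the first or last halfspace of~$r$, hence no translate of~$r$ or~$\bar r$ lies in~$[\alpha,m]_{\h}$, and the count~$f_r(\alpha,x)$ reduces to~$f_r(m,x)$. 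You should make this step explicit.
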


As previously mentioned, we use the hypotheses that~$X$ is
finite-di\-men\-sion\-al and has finite staircase length to ensure
that~$f_s$ be a quasimorphism. We use them also crucially in the proof,
in order to guarantee that the explicit primitive we construct is
bounded.

As a concrete example, we examine the case of right-angled Artin groups:

\begin{cor}[Corollary~\ref{cor:RAAG:twomedian}]\label{intro:cor:RAAGs}
 Let $\Gamma = A(G)$ be a RAAG acting on the universal covering~$X$ of
 its Salvetti complex with base vertex~$x$.  Let $s$,~$r$ be
 $\h$-segments in~$X$ such that the labels of the first and last
 halfspaces in~$s$
 and the labels of the first and last halfspaces in~$r$
 are not connected by an edge in~$G$. Then
 $[\delta^1 \widehat{f_{s, x}}] \cup [\delta^1 \widehat{f_{r, x}}]
 = 0 \in \HH^4_b(\Gamma; \R)$.
\end{cor}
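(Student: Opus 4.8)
The plan is to deduce the statement from the second part of Theorem~\ref{intro:thm:mainccc} (that is, from Theorem~\ref{thm:cuptwomedian}), after translating the hypothesis on the labels into a non-transversality statement about halfspace orbits and passing from equivariant to ordinary bounded cohomology. First I would record the relevant features of the action of $\Gamma = A(G)$ on the universal covering~$X$ of its Salvetti complex, where $G$ is a finite graph: the complex~$X$ is a finite-dimensional $\cat0$ cube complex whose dimension equals the clique number of~$G$, and it has finite staircase length (Section~\ref{s:ccc}); hence the standing hypotheses of Theorem~\ref{intro:thm:mainccc} hold, and every $\h$-segment in~$X$ determines a genuine median quasimorphism by Proposition~\ref{prop:fs:qm}. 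Moreover $\Gamma$ acts freely on~$X$ and transitively on its vertex set, so $\HH^*_{\Gamma, b}(X; \R)$ is canonically isomorphic to $\HH^*_b(\Gamma; \R)$ (Section~\ref{s:bc}), and under this isomorphism the equivariant median class $[\delta^1 f_s]$ corresponds to $[\delta^1 \widehat{f_{s, x}}]$ for the base vertex~$x$, as explained after Theorem~\ref{intro:thm:maintree}. Hence it suffices to show $[\delta^1 f_s] \cup [\delta^1 f_r] = 0$ in $\HH^4_{\Gamma, b}(X; \R)$.

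The crucial step is a dictionary between the labels and transversality. Every hyperplane of~$X$, and therefore every halfspace, carries the label in the vertex set of~$G$ shared by its dual edges. Every square of~$X$ projects to a square of the Salvetti complex, whose two pairs of parallel edges are labelled by the two distinct endpoints of an edge of~$G$; consequently two hyperplanes that cross in~$X$ have distinct labels joined by an edge of~$G$, and conversely every edge of~$G$ is realised by a square of~$X$. Since $\Gamma$ acts transitively on the lifts of any fixed edge of the Salvetti complex, the $\Gamma$-orbit of a hyperplane of~$X$ is exactly the set of all hyperplanes sharing its label, and transversality of two halfspace orbits depends only on the underlying hyperplane orbits. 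Combining these observations, two halfspaces of~$X$ have transverse $\Gamma$-orbits if and only if their labels are distinct and joined by an edge of~$G$; in particular their $\Gamma$-orbits are non-transverse whenever the labels are not joined by an edge, including the case of equal labels since~$G$ has no loops.

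Granting this dictionary, the hypothesis that no label of a first or last halfspace of~$s$ is joined by an edge of~$G$ to any label of a first or last halfspace of~$r$ means precisely that the first and last halfspaces of~$s$ and those of~$r$ have pairwise non-transverse $\Gamma$-orbits; this is exactly the hypothesis of the ``in particular'' clause of Theorem~\ref{intro:thm:mainccc} applied with $\zeta = [\delta^1 f_r]$, which therefore yields $[\delta^1 f_s] \cup [\delta^1 f_r] = 0$ in $\HH^4_{\Gamma, b}(X; \R)$. Transporting this along the isomorphism with $\HH^4_b(\Gamma; \R)$ gives the claim. The only genuinely non-formal input is the label/transversality dictionary above (together with the prior fact, needed to invoke the standing hypotheses, that the Salvetti cube complex of a RAAG is finite-dimensional with finite staircase length); the remaining steps are unwinding of the definitions of~$f_s$, of non-transversality (Section~\ref{ss:cupccc}), and of the median class~$[\delta^1 \widehat{f_{s,x}}]$, and the standard comparison between equivariant and ordinary bounded cohomology for the free, vertex-transitive RAAG action on the universal covering of its Salvetti complex.
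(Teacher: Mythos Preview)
Your proposal is correct and follows essentially the same route as the paper: verify the finiteness conditions on~$X$ (Remark~\ref{rem:salvettistaircase}), translate the label hypothesis into non-transversality of halfspace orbits via the dictionary you describe (which is exactly Lemma~\ref{lem:basic_properties_RAAGs}\ref{lem:transverse_Salvetti}), apply Theorem~\ref{thm:cuptwomedian}, and then pass from equivariant to ordinary bounded cohomology via the orbit-map isomorphism for the free action. The only cosmetic difference is that you spell out the label/transversality dictionary in the proof itself, whereas the paper isolates it as a separate lemma.
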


There is a wealth of such genuinely median classes: We show that many
of these classes~$[\delta^1 \widehat{f_{s,x}}]$ are non-trivial in
bounded cohomology (Section~\ref{ss:nontrivialmedian}) and that the
median quasimorphisms~$f_{s,x}$ are not at bounded distance to
canonical pullbacks of Brooks quasimorphisms
(Section~\ref{ss:pullbackbrooks}).  We hope that these classes will
shed new light on the bounded cohomology of RAAGs and their subgroups,
potentially leading to an understanding analogous to that of
quasimorphisms of free groups via Brooks
quasimorphisms~\cite{Grigorchuk, counting, Fournier:cup}.

Recently, Marasco proved that for many quasimorphism classes of free
groups also higher order vanishing occurs: The corresponding Massey
triple products in bounded cohomology are zero~\cite{marasco_massey}.
In the meantime, the techniques from the present article have been
used to show triviality of Massey triple products in the context
of median quasimorphisms~\cite{fhofmann_msc}.

Finally, in addition to the application of cup product vanishing in
bounded cohomology outlined in Appendix~\ref{appx:lex}, let us mention
the following open problem: It is unknown whether -- in analogy with
the cup length bound for the Lusternik--Schnirelmann category -- the
cup length for the bounded cohomology is a lower bound for the
amenable category~\cite[Remark~3.17]{CLM}. Again, an interesting test
case is given by non-abelian free groups, relating this question to
the above problems on the bounded cohomology of free groups.

\subsection*{Organization of the paper}

Basics on bounded cohomology, equivariant bounded cohomology, and
quasimorphisms are collected in Section~\ref{s:bc}. Median
quasimorphisms for actions on $\cat0$ cube complexes and their cup
products in bounded cohomology are studied in Section~\ref{s:ccc}.  We
specialise to the tree case in Section~\ref{s:trees} and to the
right-angled Artin case in Section~\ref{s:raag}.
Appendix~\ref{appx:lex} outlines the connection to the $\Lex$-problem.

\subsection*{Acknowledgements}

We wish to thank Matthew Cordes, Talia Fern{\'o}s, Elia Fioravanti and Nicolas Mo\-nod for helpful
discussions, as well as Sofia Amontova,
Michelle Bucher, and Marco Moraschini for discussions on
cup products and~$\Lex$.
Moreover, we would like to thank Franziska Hofmann for spotting a gap
in an earlier version of this paper, and the anonymous referee for
several helpful comments, including a fix of the gap.

\section{Bounded cohomology}\label{s:bc}

We recall basic notions concerning bounded cohomology and
quasimorphisms and refer the reader to the literature for more
details and proofs~\cite{vbc,Ivanov,Monod,Frigerio}.
We will only be interested in the case of discrete groups
and trivial real coefficients.

\subsection{Bounded cohomology of groups}

Let $\Gamma$ be a discrete group.  We consider the
complex~$\linf(\Gamma^{*+1},\R)$ of normed $\R[\Gamma]$-modules,
equipped with the simplicial coboundary operators.  The subcomplex of
$\Gamma$-invariants is denoted by~$\CC^*_b(\Gamma; \R) \coloneqq
\linf(\Gamma^{*+1},\R)^\Gamma$.  The \emph{bounded cohomology}
of~$\Gamma$ (with trivial real coefficients) is defined as
\[ \HH_b^*(\Gamma;\R) \coloneqq \HH^* \bigl( \CC^*_b(\Gamma;\R)\bigr).  
\]

The canonical inclusion~$\ell^\infty(\Gamma^{*+1};\R) \hookrightarrow \Map(\Gamma^{*+1},\R)$
from bounded to ordinary cochains
induces a natural transformation between bounded cohomology and
ordinary cohomology, the \emph{comparison map}
\[ \comp_\Gamma^* \colon \HH_b^*(\Gamma;\R) \to \HH^*(\Gamma;\R).
\]
The kernel of~$\comp_\Gamma^2$ is related to quasimorphisms (Section~\ref{ss:qm}).

The usual formula gives a cup product on bounded cohomology:
\begin{align*}
  \cup \colon \CC_b^p (\Gamma;\R) \otimes_\R \CC_b^q(\Gamma;\R) 
  & \to \CC_b^{p+q} (\Gamma;\R)
  \\
  f \otimes g
  & \mapsto
  \bigl( (\gamma_0,\dots, \gamma_{p+q}) \mapsto
  f(\gamma_0,\dots,\gamma_p) \cdot g(\gamma_p,\dots,\gamma_{p+q})\bigr)
  \\
  \cup \colon \HH_b^p (\Gamma;\R) \otimes_\R \HH_b^q(\Gamma;\R)
  & \to \HH_b^{p+q}(\Gamma;\R)
  \\
  [f] \otimes [g]
  & \mapsto [f \cup g]  
\end{align*}
These operations are well-defined and satisfy the usual equations with respect
to the simplicial coboundary operator.
In particular, we have 
\[\delta^{p+q}(f \cup g)
= \delta^p f \cup g + (-1)^p \cdot f \cup \delta^q g.
\]

\subsection{Equivariant bounded cohomology}
\label{ss:equivariant}

Equivariant bounded cohomology is a bounded version
of equivariant cohomology~\cite{Li,Loehsauer}.

\begin{defi}
  Let $\Gamma \actson S$ be an action of a group~$\Gamma$ on a set~$S$.
  We write
  \[ \CC_{\Gamma,b}^*(S;\R) \coloneqq \ell^\infty(S^{*+1},\R)^\Gamma,
  \]
  where $\ell^\infty(S^{*+1},\R)$ carries the simplicial coboundary
  operator and the $\Gamma$-action given by
  \[ (s_0,\dots,s_n) \mapsto f(\gamma^{-1}\cdot s_0, \dots, \gamma^{-1}\cdot s_n)
  \]
  for all~$\gamma \in \Gamma$, $f \in \ell^\infty(S^{n+1},\R)$.
  The \emph{$\Gamma$-equivariant bounded cohomology} of~$S$
  (with trivial real coefficients) is defined as
  \[ \HH_{\Gamma,b}^* (S;\R)
     \coloneqq \HH^*\bigl( \CC_{\Gamma,b}^*(S;\R)\bigr).
  \]
  For an action~$\Gamma \actson X$ on a $\cat0$ cube complex with vertex set~$V$
  (Section~\ref{ss:cccprelim}),
  we will also write
  \[ \HH_{\Gamma,b}^*(X;\R) \coloneqq \HH_{\Gamma,b}^*(V;\R).
  \]
\end{defi}

The usual formula gives a cup product on equivariant bounded cohomology:
Let $\Gamma \actson S$ be a group action and let $p,q\in \N$. Then
\begin{align*}
  \cup \colon \CC_{\Gamma,b}^p (S;\R) \otimes_\R \CC_{\Gamma,b}^q(S;\R) 
  & \to \CC_{\Gamma,b}^{p+q} (S;\R)
  \\
  f \otimes g
  & \mapsto
  \bigl( (s_0,\dots, s_{p+q}) \mapsto
  f(s_0,\dots,s_p) \cdot g(s_p,\dots,s_{p+q})\bigr)
  \\
  \cup \colon \HH_{\Gamma,b}^p (S;\R) \otimes_\R \HH_{\Gamma,b}^q(S;\R)
  & \to \HH_{\Gamma,b}^{p+q}(S;\R)
  \\
  [f] \otimes [g]
  & \mapsto [f \cup g]  
\end{align*}
are well-defined and satisfy the usual equations with respect
to the simplicial coboundary operator.

Equivariant and ordinary bounded cohomology are related via pullbacks
of orbit maps.  Namely for every~$x \in S$, the \emph{orbit map}~$o_x
\colon \Gamma \to S, \gamma \mapsto \gamma \cdot x$ induces a 
cochain map~$\CC^*_{\Gamma, b}(S; \R) \to \CC_b^*(\Gamma; \R)$.
We denote the induced map in bounded cohomology by
\[o_x^* \coloneqq \HH^*_{\Gamma,b}(o_x;\R) \colon
\HH^*_{\Gamma, b}(S; \R) \to \HH^*_b(\Gamma; \R). \]
The map~$o_x^*$ is compatible with the cup products.

In general, this map~$o_x^*$ is neither injective nor surjective. It is
an isomorphism under some additional hypotheses on the stabilizers:

\begin{thm}[{\cite[Theorem 4.23]{Frigerio}}]
  \label{thm:amenable:stab}
  Let $\Gamma \actson S$ be a group action with amenable stabilizers.
  Then for every~$x \in S$, the orbit map
  \[ o_x^* \colon \HH^*_{\Gamma, b}(S; \R) \to \HH^*_b(\Gamma; \R)\]
  is an isomorphism.
\end{thm}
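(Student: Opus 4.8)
The plan is to invoke the homological-algebra framework for bounded cohomology of Ivanov and Monod: the bounded cohomology $\HH^*_b(\Gamma;\R)$ is computed by \emph{any} strong, relatively injective resolution of the trivial normed $\R[\Gamma]$-module~$\R$, and any $\Gamma$-equivariant cochain map between two such resolutions extending $\id_\R$ induces the canonical isomorphism on the cohomology of the $\Gamma$-invariants. So the strategy is to exhibit $\CC_{\Gamma,b}^*(S;\R) = \linf(S^{*+1};\R)^\Gamma$ as the $\Gamma$-invariants of such a resolution, to note that for $S = \Gamma$ this is literally $\CC^*_b(\Gamma;\R)$, and to check that the orbit map realises the comparison map.

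First, the augmented complex $0 \to \R \to \linf(S;\R) \to \linf(S^2;\R) \to \cdots$ with the simplicial coboundary is a \emph{strong} resolution of~$\R$: fixing any point $s_0 \in S$, the maps $\linf(S^{n+1};\R) \to \linf(S^n;\R)$, $f \mapsto \bigl((s_1,\dots,s_n)\mapsto f(s_0,s_1,\dots,s_n)\bigr)$, together with $\linf(S;\R)\to\R$, $f \mapsto f(s_0)$, form a contracting homotopy by norm-nonincreasing linear maps once the $\Gamma$-action is forgotten. This is the standard computation and needs nothing beyond $S \neq \emptyset$.

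Second, and this is the heart of the matter, each $\linf(S^{n+1};\R)$ is relatively injective as a normed $\R[\Gamma]$-module; this is precisely where amenability of the stabilisers is used. Decomposing $S$ into $\Gamma$-orbits $\Gamma/H_i$ with each $H_i$ amenable, one first shows that $\linf(\Gamma/H;\R)$ is relatively injective for amenable~$H$: given an admissible monomorphism and a $\Gamma$-morphism to be extended, one averages over the $H$-cosets using an invariant mean on~$H$, which yields a $\Gamma$-equivariant, norm-controlled extension. One then promotes this to $\linf(S^{n+1};\R) = \linf(S^n;\linf(S;\R))$ with the diagonal action, using the general fact that $\linf(Y;E)$ is relatively injective whenever $E$ is and $Y$ is any $\Gamma$-set. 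Combined with the first step, $\linf(S^{*+1};\R)$ is a strong, relatively injective resolution of~$\R$.

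Third, the orbit map $o_x\colon\Gamma\to S$ is $\Gamma$-equivariant, so precomposition with its Cartesian powers is a $\Gamma$-equivariant cochain map $\linf(S^{*+1};\R)\to\linf(\Gamma^{*+1};\R)$ commuting with the augmentations to~$\R$. By the fundamental lemma, such a map between a strong resolution and a relatively injective resolution is unique up to $\Gamma$-homotopy and induces an isomorphism on the cohomology of the $\Gamma$-invariants; this isomorphism is exactly $o_x^*$. Compatibility with cup products is then immediate from naturality of the equivariant cross product under $o_x$. I expect the main obstacle to be the relative injectivity in the second step, specifically setting up the extension problem for admissible monomorphisms and carrying out the coset averaging over the amenable stabilisers so that both equivariance and the norm bound are preserved; the reduction from the diagonal modules $\linf(S^{n+1};\R)$ to $\linf(\Gamma/H;\R)$ is then routine bookkeeping.
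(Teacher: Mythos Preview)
The paper does not supply its own proof of this theorem; it simply cites Frigerio's book, and your proposal is exactly the standard Ivanov--Monod argument that underlies that reference: show that $\linf(S^{*+1};\R)$ is a strong relatively injective resolution of~$\R$, and compare it with the bar resolution via the orbit map.

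One point to tighten in your second step. The inductive route through $\linf(S^{n+1};\R)\cong\linf(S^n;\linf(S;\R))$ and the ``general fact that $\linf(Y;E)$ is relatively injective whenever $E$ is and $Y$ is any $\Gamma$-set'' is unnecessary, and the general fact as stated is not obvious: the invariant mean on an amenable stabiliser acts on scalar-valued bounded functions, and extending the averaging to $E$-valued functions needs additional work (or a weak$^*$ argument) that you have not supplied. The cleaner and entirely elementary fix is to apply your orbit-decomposition and coset-averaging argument \emph{directly} to the $\Gamma$-set $T=S^{n+1}$: the stabiliser of a tuple $(s_0,\dots,s_n)$ is $\bigcap_i\operatorname{Stab}(s_i)$, which is a subgroup of the amenable group $\operatorname{Stab}(s_0)$ and hence amenable, so each $\linf(S^{n+1};\R)$ is relatively injective by the same lemma you prove for $\linf(S;\R)$. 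With that adjustment, your three steps are correct and constitute the standard proof.
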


Similarly, one can also consider the case of uniformly boundedly acyclic
actions~\cite[Section~5.5]{Liloehmoraschini}.

\subsection{Quasimorphisms}
\label{ss:qm}

The exact part of bounded cohomology in degree~$2$ can be described
in terms of quasimorphisms.

\begin{defi}[quasimorphism]\label{def:qmgroup}
  Let $\Gamma$ be a group. A \emph{quasimorphism of~$\Gamma$} is
  a function~$f \colon \Gamma \to \R$ whose \emph{defect}
  \[ D(f)
  \coloneqq \sup\limits_{\gamma,\lambda \in \Gamma}
  \bigl|f(\gamma) + f(\lambda) - f(\gamma\cdot \lambda)\bigr|
  \]
  is finite.  A quasimorphism is called \emph{trivial} if it is a sum
  of a homomorphism and a bounded function.
\end{defi}

We recall an important example of quasimorphisms of the free group:
the Brooks quasimorphisms~\cite[2.3.2]{Calegari}. These
counting quasimorphisms are at the basis of the generalisations
explored in this paper.

\begin{example}[Brooks quasimorphisms]\label{exa:brooks}
  Let $F$ be a free group with a free generating set~$S$ 
  and let $w \in F$ be a reduced word over~$S \cup S^{-1}$.
  For an element~$\gamma \in F$, we denote by~$C_w(\gamma)$ the number
  of copies of~$w$ that appear in a reduced expression for~$\gamma$.
  The \emph{(big) Brooks quasimorphism for~$w$} is
  \begin{align*}
    H_w \colon F & \to \R\\
    \gamma & \mapsto C_w(\gamma) - C_{w^{-1}}(\gamma).
  \end{align*}
  Then $D(H_w) \leq 3 \cdot (|w| - 1)$, where $|w|$ denotes the length
  of~$w$.

  Another option is to define~$c_w(\gamma)$ as the maximal number of
  \emph{non-over\-lap\-ping} copies of~$\gamma$ that appear in a reduced
  expression for~$\gamma$.  This leads to the \emph{(small) Brooks
    quasimorphism for~$w$}, denoted by~$h_w$.  The advantage of the
  small Brooks quasimorphism is that $D(h_w) \leq 2$, which gives a
  bound independent of~$w$. This makes~$h_w$ more suitable for
  quantitative applications such as computations of stable commutator
  length.  If $w$ is \emph{non-self-overlapping} (i.e., there is no reduced
  expression~$w = uvu$ with a non-empty word~$u$), then $H_w = h_w$.
\end{example}

For later reference, we note the following property of Brooks quasimorphisms:

\begin{lemma}
\label{lem:bounded_distance_brooks}
  Let~$F$ be a non-abelian free group and~$w, w'\in F$. Then~$H_w$ is
  at bounded distance from~$H_{w'}$ if and only if~$w = w'$.
\end{lemma}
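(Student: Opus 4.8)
The plan is to prove the non-trivial implication: if $H_w$ is at bounded distance from~$H_{w'}$, then $w=w'$.

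\emph{Reduction to homogeneous quasimorphisms.} I would use the standard facts that every quasimorphism~$f$ lies at distance at most its defect~$D(f)$ from its homogenisation $\overline f(\gamma)=\lim_{n}f(\gamma^{n})/n$, and that two homogeneous quasimorphisms at bounded distance are equal (evaluate on powers and let $n\to\infty$). Hence $H_w$ and~$H_{w'}$ are at bounded distance if and only if $\overline{H_w}=\overline{H_{w'}}$, and it suffices to show that $w\mapsto\overline{H_w}$ is injective.

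\emph{A formula for $\overline{H_w}$, and a combinatorial fact.} Being homogeneous, $\overline{H_w}$ is a class function; and for a cyclically reduced~$v\in F$ a direct count of occurrences in the reduced words~$v^{n}$ shows that $\overline{H_w}(v)$ equals the number of occurrences of~$w$ minus the number of occurrences of~$w^{-1}$ in one period of the bi-infinite word obtained by repeating~$v$. I would also record the elementary fact that for $v$ cyclically reduced and non-trivial the word $v^{-1}$ never occurs as a subword of that bi-infinite word: tracking such a hypothetical occurrence position by position forces either a non-reduced pair of adjacent letters or a letter equal to its own inverse.

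\emph{Recovering~$w$.} Assume $w\neq w'$. If $w$ or~$w'$ has length~$\le 1$, then the corresponding Brooks quasimorphism is $\pm$ a homomorphism, so $H_w-H_{w'}$ is a non-trivial homomorphism (or the case reduces to showing $H_{w'}$ is unbounded for $w'\neq e$), hence unbounded; this case also shows that non-abelianity is indispensable, since over $F\cong\Z$ the quasimorphisms $H_a$ and~$H_{a^2}$ are at bounded distance and the lemma fails. Otherwise, let $u$ be the shorter of $w,w'$ (either one if the lengths agree) and look for a cyclically reduced element of the form $v=u\pi$, with $\pi$ a short reduced word chosen so that each of the four occurrence counts of $w,w',w^{-1},w'^{-1}$ in one period of~$v$ becomes explicitly computable; using the freedom in a non-abelian free group one then arranges $\overline{H_w}(v)\neq\overline{H_{w'}}(v)$. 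The typical choice is a ``generic'' $\pi$ that confines all four words to the ``$u$-block'' of a period, so that the two net counts are visibly different (one of them is~$1$, the other~$0$); when the alphabet is too small for such a~$\pi$ — for instance $w=ab$, $w'=ba$ in rank~$2$ — one uses a tailored~$v$ whose occurrence statistics one can still read off, e.g.\ $v=aba^{-1}b^{-1}$, for which $\overline{H_{ab}}(v)=1$ while $\overline{H_{ba}}(v)=-1$. Finally the possibility $w'=w^{-1}$ is dealt with separately via $H_{w^{-1}}=-H_w$: then $H_w-H_{w'}=2H_w$, unbounded as soon as $w\neq e$. The main obstacle I anticipate is exactly the construction and verification of this auxiliary element~$v$: producing a spacer~$\pi$ that suppresses all spurious overlaps at the two junctions of a period — or, in the tight small-rank cases, a replacement word with controllable occurrence counts — is elementary combinatorics on words, but it needs careful bookkeeping, since the admissible boundary letters of~$\pi$ can be severely constrained and the overlap conditions at the junctions are conditional on alignment patterns that have to be disentangled, and it is precisely here that the non-abelian hypothesis is used.
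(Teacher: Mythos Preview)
Your reduction to homogenisations and the combinatorial fact that $v^{-1}$ never occurs in powers of a cyclically reduced~$v$ are both correct and are exactly the ingredients the paper uses. The gap is that you never actually construct the test element: you describe a template $v = u\pi$ with a ``generic spacer''~$\pi$, acknowledge that finding~$\pi$ is the main obstacle, and already anticipate special cases (equal lengths, small rank, $w' = w^{-1}$) where the template breaks down and ad hoc choices are needed. As written this is a strategy rather than a proof, and the part you defer is the only part with content.

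The paper's construction avoids all of this case analysis with a single uniform choice. Since the symmetrised basis~$S$ has at least four elements, pigeonhole gives $a \in S$ such that $w$ does not start with~$a^{-1}$ and $w'$ neither starts nor ends with~$a$, and then $b \in S \setminus \{a^{-1}\}$ such that $w$ does not end with~$b^{-1}$ and $w'$ does not end with~$b$. For $k$ larger than both lengths, the element $v = a^k w b^k$ is cyclically reduced; $w^{-1}$ cannot occur in any~$v^n$ (it cannot overlap the copy of~$w$, cannot end with~$a$, and cannot start with~$b$), so $\overline{H_w}(v) > 0$. Then $\overline{H_{w'}}(v) > 0$ forces $w'$ to occur in some~$v^n$, and the constraints on $a,b$ together with $k > |w'|$ pin that occurrence inside the copy of~$w$; hence $w'$ is a subword of~$w$. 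Swapping the roles of $w$ and~$w'$ finishes. The point is that sandwiching~$w$ between \emph{long} blocks of single letters, rather than appending a short spacer to the shorter word, gives enough room to kill all boundary overlaps at once and turns the problem into ``$w'$ is a subword of~$w$'' rather than a direct comparison of occurrence counts; this is precisely what your template lacks and why you were forced into special cases.
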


Note that~$F$ must be non-abelian for this to hold. If~$F
= \langle a \rangle \cong \mathbb{Z}$, then~$H_{a^n}(a^k) =
\mathrm{sign}(k n) \cdot (|k| - |n| + 1)$ whenever $|k| \geq |n|$, so
every Brooks quasimorphism is close to the identity or to minus the
identity.

\begin{proof}
	Assume that $H_w$ is  at bounded distance from~$H_{w'}$. We need to show that this implies $w = w'$.

  Let~$S$ be a symmetrized basis of~$F$, so by assumption~$|S|
  \geq 4$.  Let~$a \in S$ be such that~$w$ does not start
  with~$a^{-1}$, and~$w'$ does neither start nor end with~$a$.
  Let~$a^{-1} \neq b \in S$ be such that~$w$ does not end
  with~$b^{-1}$, and~$w'$ does not end with~$b$.  Let~$k \geq 1$ be
  larger than the length of both~$w$ and~$w'$.  Then~$a^k w b^k$ is a
  cyclically reduced word, and we claim that~$\overline{H_w}(a^k w
  b^k) > 0$.  For this, it suffices to notice that~$w^{-1}$ cannot be
  a subword of~$(a^k w b^k)^n$ for any~$n \geq 1$.  Indeed, no
  occurrence of~$w^{-1}$ can overlap with~$w$~\cite[Lemma
    3.14]{Fournier:cup}, and also by assumption~$w^{-1}$ cannot end
  with~$a$ (because~$w$ does not start with~$a^{-1}$) and~$w^{-1}$
  cannot start with~$b$ (because~$w$ does not end with~$b^{-1}$).

  It follows that~$\overline{H_{w'}}(a^k w b^k) > 0$ as well, in
  particular~$w'$ must occur as a subword of~$(a^k w b^k)^n$ for
  some~$n>0$.  However, $w'$ does neither start nor end with~$a$, and
  it does not end with~$b$.  Since moreover~$k$ is larger than the
  length of~$w'$, we conclude that~$w'$ must occur as a subword
  of~$w$.

  Swapping the roles of~$w$ and~$w'$, we see that $w$ must occur as a
  subword of~$w'$, which completes the proof.
\end{proof}

Every quasimorphism~$f \colon \Gamma \to \R$ defines a
$\Gamma$-invariant $1$-cochain via 
\[ \widehat f \colon (\gamma_0,\gamma_1)
\mapsto f(\gamma_0^{-1}\cdot \gamma_1).
\]
By construction, $\delta^1 \widehat f$ is bounded -- that is, $\widehat f$
is a \emph{quasicocycle} -- and thus $\delta^1 \widehat f$ defines a
class in~$\HH_b^2(\Gamma;\R)$. In fact, such classes form the
kernel of the comparison map in degree~$2$~\cite[Proposition 2.8]{Frigerio}:

\begin{prop}\label{prop:kercomp2}
  Let $\Gamma$ be a group. Then the sequence
  \begin{align*}
    0 \longrightarrow
    \QM(\Gamma) / \QM_0(\Gamma)
    & \longrightarrow
    \HH_b^2(\Gamma;\R)
    \overset{\comp_\Gamma^2}{\longrightarrow}
    \HH^2(\Gamma;\R)
    \\
    [f]
    & \longmapsto
    [\delta^1 \widehat f] 
  \end{align*}
  is exact, where $\QM(\Gamma)$ is the space of quasimorphisms on~$\Gamma$
  and $\QM_0(\Gamma)$ the subspace of trivial quasimorphisms.

  Moreover, every element in~$\QM(\Gamma)$ admits up to bounded functions
  a unique homogeneous representative (i.e., a quasimorphism that is a
  homomorphism on all cyclic subgroups).
\end{prop}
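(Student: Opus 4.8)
The plan is to verify exactness at each spot of the short sequence and then to construct the homogeneous representative. First I would check well-definedness of the map~$[f] \mapsto [\delta^1 \widehat f]$: if $f$ is a trivial quasimorphism, say $f = \varphi + \beta$ with $\varphi$ a homomorphism and $\beta$ bounded, then $\widehat\varphi$ is a genuine cocycle (indeed $\delta^1\widehat\varphi = 0$ since $\varphi$ is multiplicative), so $\delta^1 \widehat f = \delta^1 \widehat\beta$ is the coboundary of a \emph{bounded} $1$-cochain, hence trivial in~$\HH^2_b(\Gamma;\R)$. This shows the map descends to $\QM(\Gamma)/\QM_0(\Gamma)$. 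Injectivity: if $\delta^1 \widehat f = \delta^1 \eta$ for some bounded $1$-cochain~$\eta$, then $\widehat f - \eta$ is a $\Gamma$-invariant cocycle in~$\linf(\Gamma^{*+1},\R)$ composed with an unbounded piece; writing $\psi(\gamma) \coloneqq (\widehat f - \eta)(e,\gamma)$ and using $\Gamma$-invariance plus the cocycle condition one gets that $\psi$ is a homomorphism and $f - \psi = (\widehat f - \widehat\psi)(e,\args)$ differs from the bounded function $\eta(e,\args)$ by a bounded amount, so $f$ is trivial. Exactness at $\HH^2_b$: a class $[\omega] \in \HH^2_b(\Gamma;\R)$ maps to $0$ under $\comp^2_\Gamma$ iff $\omega = \delta^1 c$ for some (unbounded, a priori) $1$-cochain~$c$; setting $f(\gamma) \coloneqq c(e,\gamma) - c(e,e)$ and exploiting invariance, $\delta^1 c$ bounded forces $f$ to have finite defect, i.e.\ $f \in \QM(\Gamma)$, and $[\delta^1\widehat f] = [\omega]$.

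For the last sentence, I would first recall the standard homogenisation: given $f \in \QM(\Gamma)$, define
\[
  \overline f(\gamma) \coloneqq \lim_{n\to\infty} \frac{f(\gamma^n)}{n}.
\]
The limit exists because $n \mapsto f(\gamma^n)$ is quasi-additive: $|f(\gamma^{m+n}) - f(\gamma^m) - f(\gamma^n)| \leq D(f)$, so the Fekete-type argument gives convergence, and moreover $|\overline f(\gamma) - f(\gamma)| \leq D(f)$, so $\overline f - f$ is bounded. One then checks $\overline f(\gamma^k) = k\,\overline f(\gamma)$ directly from the definition, which is exactly the statement that $\overline f$ restricts to a homomorphism on each cyclic subgroup, and that $\overline f$ is again a quasimorphism with $D(\overline f) \leq 2 D(f)$ (or even $\leq D(f)$ after a short estimate). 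Uniqueness up to bounded functions: if $g_1, g_2$ are two homogeneous quasimorphisms at bounded distance, then $g_1 - g_2$ is a bounded homogeneous quasimorphism; but a homogeneous quasimorphism that is bounded must be identically zero, since $|g(\gamma)| = |g(\gamma^n)|/n \leq \|g\|_\infty/n \to 0$.

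The main obstacle is the injectivity/exactness bookkeeping rather than any deep idea: one must be careful about which cochains are assumed bounded, and track the passage from an invariant $1$-cochain on $\Gamma^2$ to a function on $\Gamma$ via $\gamma \mapsto c(e,\gamma)$, making sure the homomorphism property and the boundedness transfer correctly through the $\Gamma$-invariance and the (un)bounded coboundary conditions. Since this is classical~\cite[Proposition~2.8]{Frigerio}, I would present the argument compactly, citing the reference for the routine verifications and only spelling out the homogenisation limit and the vanishing of bounded homogeneous quasimorphisms in detail.
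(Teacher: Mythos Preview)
Your sketch is correct and follows the standard argument; the paper itself does not give a proof but simply cites~\cite[Proposition~2.8]{Frigerio}, which is exactly what you propose doing. The only minor sloppiness is in the injectivity step, where $\psi(e)$ need not vanish a priori, but this is absorbed by a constant and does not affect the conclusion.
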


In particular, if the comparison map in degree~$2$ is trivial, then
every bounded cohomology class is represented by a quasimorphism.
This is of course the case for groups~$\Gamma$ with 
$\HH^2(\Gamma; \R) \cong 0$, e.g., for free groups. Therefore, the
study of quasimorphisms is central to the study of the bounded
cohomology of free groups. More generally, we have:

\begin{prop}[{\cite[Corollary 5.4]{CLM}, \cite[Example 4.7]{Li}}]
  Let $\Gamma$ be a group acting on a tree with amenable vertex stabilizers.
  Then the comparison map~$\comp^n_\Gamma$ is trivial for all~$n \geq 1$.
\end{prop}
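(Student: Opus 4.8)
The plan is to reduce the claim to the bounded acyclicity of the $\linf$-spaces attached to the vertex and edge sets of the tree, and then to transport that vanishing across the comparison map by a dimension shift. For $n=1$ there is nothing to show, since $\HH^1_b(\Gamma;\R)=0$ for every group; so assume $n\geq 2$. Write $V$ for the vertex set and $\vec E$ for the set of oriented edges of $T$, and let $\linf(\vec E;\R)^{\alt}\subseteq\linf(\vec E;\R)$ denote the antisymmetric bounded functions, which form a $\Gamma$-module direct summand. Since $T$ is contractible and one-dimensional, the discrete differential $(df)(e)=f(e_+)-f(e_-)$ produces a commutative ladder of short exact sequences of $\R[\Gamma]$-modules, namely $0\to\R\to\linf(V;\R)\to C\to 0$ mapping to $0\to\R\to\Map(V;\R)\to D\to 0$ via the identity on $\R$, the inclusion $\linf(V;\R)\hookrightarrow\Map(V;\R)$, and an inclusion $C:=\linf(V;\R)/\R\hookrightarrow D:=\Map(V;\R)/\R$. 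The tree structure enters twice: $d$ embeds $C$ as a $\Gamma$-submodule of $\linf(\vec E;\R)^{\alt}$, whereas on unbounded cochains $d$ is onto --- a bounded antisymmetric function on $\vec E$ need not admit a bounded primitive, but an arbitrary one does, by integrating along geodesics in $T$ --- so $d$ induces an isomorphism $D\cong\Map(\vec E;\R)^{\alt}$. Hence the inclusion $C\hookrightarrow D$ factors as a morphism of Banach $\R[\Gamma]$-modules $C\to\linf(\vec E;\R)^{\alt}$ followed by $\linf(\vec E;\R)^{\alt}\hookrightarrow\Map(\vec E;\R)^{\alt}\cong D$.

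The crucial input is that $\linf(V;\R)$ and $\linf(\vec E;\R)^{\alt}$ are boundedly acyclic coefficient modules, i.e.\ $\HH^k_b(\Gamma;\linf(V;\R))=0=\HH^k_b(\Gamma;\linf(\vec E;\R)^{\alt})$ for $k\geq 1$. This rests on the standard fact that $\linf(S;\R)$ is relatively injective --- hence boundedly acyclic in positive degrees --- whenever $S$ is a $\Gamma$-set with amenable stabilisers; here the vertex stabilisers are amenable by hypothesis, the stabiliser of an oriented edge lies in the stabilisers of its endpoints, and $\linf(\vec E;\R)^{\alt}$ is a direct summand of $\linf(\vec E;\R)$. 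Feeding this into the long exact sequence of $0\to\R\to\linf(V;\R)\to C\to 0$ yields an isomorphism $\partial_b\colon\HH^{n-1}_b(\Gamma;C)\xrightarrow{\cong}\HH^n_b(\Gamma;\R)$ for $n\geq 2$. On the other hand, because $C\hookrightarrow D$ factors through the Banach module $\linf(\vec E;\R)^{\alt}$ and forgetting boundedness commutes with pushforward along bounded module maps, the comparison map $\HH^{n-1}_b(\Gamma;C)\to\HH^{n-1}(\Gamma;D)$ factors through $\HH^{n-1}_b(\Gamma;\linf(\vec E;\R)^{\alt})=0$ and is therefore the zero map.

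It remains to combine the two observations. Comparison maps are natural and compatible with the connecting homomorphisms of the ladder above, so $\comp^n_\Gamma\circ\partial_b$ agrees with the composite $\HH^{n-1}_b(\Gamma;C)\to\HH^{n-1}(\Gamma;D)\xrightarrow{\partial}\HH^n(\Gamma;\R)$, where $\partial$ is the unbounded connecting homomorphism; since the first factor vanishes and $\partial_b$ is an isomorphism, $\comp^n_\Gamma=0$. (One may equivalently package the computation of $\HH^*_b(\Gamma;\R)$ through the equivariant bounded cohomology $\HH^*_{\Gamma,b}(T;\R)$ and Theorem~\ref{thm:amenable:stab}, but the tree-specific content is really the surjectivity of $d$ on unbounded cochains.)

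The step I expect to be the main obstacle --- at least for a fully self-contained proof --- is the bounded acyclicity of $\linf(S;\R)$ for $\Gamma$-sets $S$ with amenable stabilisers when $S$ has infinitely many orbits: one needs that bounded cohomology is compatible with the $\linf$-product $\linf(S;\R)=\prod^{\linf}_i\linf(\Gamma/H_i;\R)$ over the stabilisers $H_i$ of orbit representatives, which in turn requires the contracting homotopies of the co-induced modules $\linf(\Gamma/H_i;\R)$ to have operator norms bounded uniformly in $i$; this does hold, since these homotopies are built from invariant means on the amenable groups $H_i$. The remaining bookkeeping --- the direct-summand splitting of $\linf(\vec E;\R)^{\alt}$ and the naturality of the comparison map with respect to connecting homomorphisms --- is routine.
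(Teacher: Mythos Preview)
The paper does not give its own proof of this proposition; it is stated with references to \cite{CLM} and \cite{Li} and used as input. So there is no in-paper argument to compare against.

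Your argument is correct. The dimension-shift via the short exact sequence $0\to\R\to\linf(V;\R)\to C\to 0$, together with the factorisation of $C\hookrightarrow D$ through the relatively injective module~$\linf(\vec E;\R)^{\alt}$, cleanly isolates the r\^ole of the tree (surjectivity of~$d$ on unbounded cochains) and of amenability (bounded acyclicity of the $\linf$-modules). Two minor remarks. First, your concern about the $\linf$-product over infinitely many orbits can be sidestepped: relative injectivity of~$\linf(S;\R)$ for a $\Gamma$-set~$S$ with amenable point stabilisers is proved directly (without orbit decomposition) in the standard references, e.g.\ \cite[Lemma~4.21]{Frigerio}, so no uniformity argument is needed. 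Second, the splitting $\linf(\vec E;\R)=\linf(\vec E;\R)^{\alt}\oplus\linf(\vec E;\R)^{\sym}$ is indeed $\Gamma$-equivariant and bounded regardless of whether $\Gamma$ acts with inversions, since the edge-reversal involution commutes with any automorphism of~$T$; so the summand inherits relative injectivity as you claim.

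The approach in the cited references is somewhat different in flavour: \cite{CLM} passes through the amenable category of the classifying space, and \cite{Li} through equivariant bounded cohomology (which you allude to at the end). Your argument is closer in spirit to the classical Bieri--Eckmann dimension shift for groups acting on trees, adapted to the bounded setting; it is more elementary in that it avoids the machinery of amenable covers, at the cost of being specific to one-dimensional complexes.
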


\subsection{Quasimorphisms of group actions}

In view of the cocycle description of quasimorphisms,
we can speak of quasimorphisms of group actions: 

\begin{defi}[quasimorphism of a group action]\label{defi:qmaction}
  Let $\Gamma \actson S$ be a group action on a set~$S$. A
  \emph{quasimorphism of~$\Gamma\actson S$} is a function~$f \colon S \times S \to \R$
  that is $\Gamma$-invariant (with respect to the diagonal action on~$S \times S$)
  and has finite \emph{defect}
  \[ D(f) \coloneqq \| \delta^1 f\|_\infty
  = \sup_{x,y,z \in S} \bigl| f(y,z) - f(x,z) + f(x,y) \bigr|.
  \]
\end{defi}

\begin{rem}\label{rem:qmbc}
  If $f$ is a quasimorphism of a group action~$\Gamma \actson S$, then
  $\delta^1 f$ is a bounded cocycle in~$\CC^2_{\Gamma,b}(S;\R)$ and
  thus defines a class~$[\delta^1 f] \in \HH_{\Gamma,b}^2(S;\R)$.
  By construction, the set of all such classes coincides with the
  kernel of the comparison map~$\HH_{\Gamma,b}^2(S;\R) \to \HH_\Gamma^2(S;\R)$. 
\end{rem}

\begin{rem}[from group actions to quasimorphisms on groups]
\label{rem:qmtogroups}
  Let $f \colon S \times S \to \R$ be a quasimorphism of a group
  action~$\Gamma \actson S$. If $x \in S$, then
  \begin{align*}
    f_x \colon \Gamma & \to \R
    \\
    \gamma & \mapsto f(x,\gamma \cdot x)
  \end{align*}
  is a quasimorphism of~$\Gamma$ in the sense of
  Definition~\ref{def:qmgroup}.
  By construction,
  \[ [\delta^1 \widehat{f_x}]
  = o_x^*\bigl([\delta^1 f]\bigr)
  \in \HH_b^2(\Gamma;\R),
  \]
  where $o_x\colon \Gamma \to S$ is the orbit map of~$x$ and~$\widehat{f_x}$
  is the $\Gamma$-invariant $1$-cochain defined by~$f_x$.

  If $x$ and $x'$ lie in the same orbit, then the classes
  in~$\HH_b^2(\Gamma;\R)$ differ only by the conjugation action.
  The conjugation action is trivial on~$\HH_b^2(\Gamma;\R)$:
  in degree~$2$ this can be dervied from the exact sequence
  in Proposition~\ref{prop:kercomp2}, the triviality of
  the conjugation action on~$\HH^2(\Gamma;\R)$, and the
  fact that homogeneous quasimorphisms are conjugation
  invariant; more generally, one can prove the triviality
  of the conjugation action on bounded cohomology by
  a quantitative version of the proof for the classical
  case~\cite[Lemma~A.2]{clara}. Therefore, we
  obtain~$[\delta^1f_x] = [\delta^1 f_{x'}]$ in~$\HH_b^2(\Gamma;\R)$.
\end{rem}

Examples of quasimorphisms of group actions will be given in
Section~\ref{s:ccc} and Section~\ref{s:raag}.

\section{Median quasimorphisms}
\label{s:ccc}

We define median quasimorphisms on groups
acting on $\cat0$ cube complexes and their associated 
equivariant bounded cohomology classes. These classes do not have
vanishing cup products in general (Example~\ref{ex:FxF}), but we show
that they do under additional compatibility hypotheses.

\subsection{Preliminaries on $\boldsymbol{\cat0}$ cube complexes}\label{ss:cccprelim}

We recall the necessary basics on $\cat0$ cube complexes
and median graphs~\cite{Sageev2014, Roller, BH:Metricspacesnon}:

A cube complex~$X$ is \emph{non-positively curved} if the links of its
vertices are flag complexes. Informally, this means that as soon as
there is a corner of a possible cube of dimension at least~$3$ in~$X$,
there is an actual cube with this corner. We refer to this as the
\emph{link condition}. A \emph{$\cat0$ cube complex} is a simply
connected non-positively curved cube complex.

Throughout this section, we will consider the following situation:

\begin{setup}\label{setup:cat0}
Fix a $\cat0$ cube complex~$X$ with vertex set~$V$
and an action of a group~$\Gamma$ on~$X$ by combinatorial automorphisms.
We denote the $k$-skeleton of~$X$ by~$X(k)$; in particular, $X(0) = V$.
The $1$-skeleton~$X(1)$ is an undirected graph equipped with
the $\lone$-metric~$D$. Given~$x, y \in V$, we denote by~$[x, y]$ the
union of all combinatorial geodesics (represented by sequences of
vertices) from~$x$ to~$y$; such sets are called \emph{intervals}.
\end{setup}

The most important property of $X(1)$ is that it is a \emph{median
  graph}: for every triple of points~$x, y, z \in V$, the
intersection~$[x, y] \cap [y, z] \cap [x, z]$ consists of a unique
vertex, which we denote by~$m(x, y, z)$.

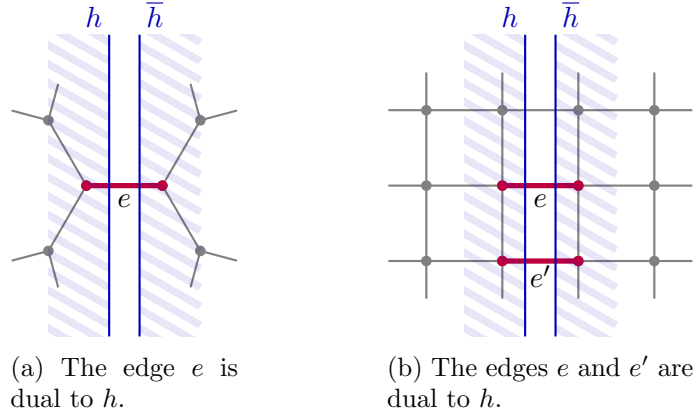
\begin{figure}
  \begin{center}
  \subcaptionbox{The edge~$e$ is dual to~$h$.}{%
    \begin{tikzpicture}[x=1cm,y=1cm,thick]
      \begin{scope}[black!50]
      \tree
      \end{scope}
      \hedge{(-0.5,0)}{(0.5,0)}
      \draw (0,-0.2) node{$e$};
      \halfspaces{(0,0)}{90}{h}
    \end{tikzpicture}}
  \hfil
  \subcaptionbox{The edges $e$ and~$e'$ are dual to~$h$.%
      \label{subfig:dualedges}}{%
      \begin{tikzpicture}[x=1cm,y=1cm,thick]
        \begin{scope}[black!50]
          \squaregrid
        \end{scope}
        \hedge{(-0.5,0)}{(0.5,0)}
        \draw (0,-0.2) node{$e$};
        \hedge{(-0.5,-1)}{(0.5,-1)}
        \draw (0,-1.2) node{$e'$};
        \halfspaces{(0,0)}{90}{h}
  \end{tikzpicture}}
  \end{center}
  
  \caption{Halfspaces in a tree and a square grid.}
  \label{fig:halfspace}
\end{figure}

For every edge~$e$ in~$X$, there exists a closest-point projection,
called the \emph{gate map}~$\g_e \colon V \to e(0)$. We also write $\{
\alpha(e), \omega(e) \}$ for~$e(0)$ and always make sure that the
choice of labels for these two vertices does not matter.  The vertex
set then splits as~$V = \g_e^{-1}(\alpha(e)) \sqcup
\g_e^{-1}(\omega(e))$.  Each of these sets is called a
\emph{(combinatorial) halfspace} (Figure~\ref{fig:halfspace}).  We
write~$\h$ for the set of halfspaces in $X$.  By definition, $\h$ is
equipped with a fixpoint-free involution~$\h \to \h \colon h \mapsto
\overline{h} \coloneqq V \setminus h$.  Another important property of
halfspaces is that they are \emph{convex}: Whenever $x, y \in h$, it
follows that $[x, y] \subset h$. This will be clear from
Lemma~\ref{lem:dist:halfspaces} below.

A \emph{hyperplane} is a set~$\{ h, \overline{h} \}$, where $h \in
\h$. If $h \in \h$ arises from~$\g_e$, we say that $e$ is \emph{dual}
to the halfspace~$h$ and to the hyperplane~$\{ h, \overline{h} \}$.
Every edge is dual to exactly one hyperplane, but different edges can
be dual to the same hyperplane if they induce the same partition
(Figure~\ref{subfig:dualedges}).

Let $x,y \in V$.  We say that a halfspace~$h$ \emph{separates~$y$
  from~$x$} if $y \in h$ and~$x \in \overline{h}$
(Figure~\ref{subfig:separates}); this notion is \emph{not} symmetric
in~$y$ and~$x$. Similarly, we may also speak of hyperplanes
separating~$x$ and~$y$.  A combinatorial geodesic~$\gamma = x_0 x_1
\cdots x_k$ is said to \emph{cross into} a halfspace~$h$ at time~$i$
if $h$ separates~$x_{i+1}$ from~$x_i$
(Figure~\ref{subfig:crosses}). In this case, we also say that $\gamma$
\emph{crosses} the hyperplane~$\{h,\overline h\}$. The number of times
$h$ is crossed into by~$\gamma$ refers to the number of times~$i$ such
that this holds. In fact, this number is always either~$0$ or~$1$.

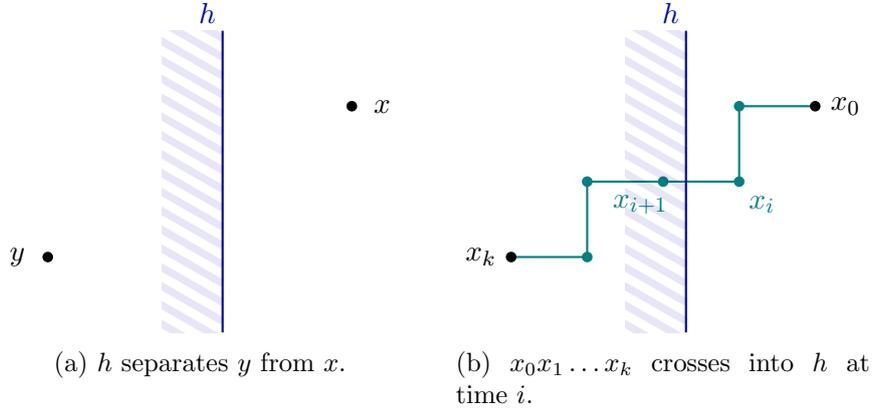
\begin{figure}
  \begin{center}
    \subcaptionbox{$h$ separates~$y$ from~$x$.%
      \label{subfig:separates}}{%
      \begin{tikzpicture}[x=1cm,y=1cm,thick]
        \vertx{(0,0)}
        \draw (-0.4,0) node {$y$};
        \vertx{(4,2)}
        \draw (4.4,2) node {$x$};
        \halfspace{(2.5,1)}{90}{h}
      \end{tikzpicture}}
  \hfil
  \subcaptionbox{$x_0x_1 \dots x_k$ crosses into~$h$ at time~$i$.%
    \label{subfig:crosses}}{%
    \begin{tikzpicture}[x=1cm,y=1cm,thick]
        \begin{scope}[\geodesiccolor]
          \draw (0,0) -- (1,0) -- (1,1) -- (3,1) -- (3,2) -- (4,2);
          \vertx{(1,0)}
          \vertx{(1,1)}
          \vertx{(2,1)}
          \vertx{(3,1)}
          \vertx{(3,2)}
          \draw (3.3,0.7) node {$x_i$};
          \draw (1.7,0.7) node {$x_{i+1}$};
        \end{scope}
        \vertx{(0,0)}
        \draw (-0.4,0) node {$x_k$};
        \vertx{(4,2)}
        \draw (4.4,2) node {$x_0$};
        \halfspace{(2.5,1)}{90}{h}
    \end{tikzpicture}}  
  \end{center}

  \caption{Separation by halfspaces.}
\end{figure}

\begin{lemma}[\cite{Sageev1995}{\cite[Remark~1.7]{Hagen}}]
  \label{lem:dist:halfspaces}
  In the situation of Setup~\ref{setup:cat0}, let $x,y \in V$. Then 
  the $\lone$-distance~$D(x, y)$ is equal to the number of
  hyperplanes that separate~$x$ and~$y$.  More precisely, for every
  hyperplane~$H$ separating~$x$ and~$y$, every geodesic from~$x$
  to~$y$ crosses~$H$ exactly once.
\end{lemma}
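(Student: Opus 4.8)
The plan is to prove the two assertions together by establishing, for vertices $x, y \in V$, a bijection between the hyperplanes separating $x$ and $y$ and the "steps" of any fixed combinatorial geodesic from $x$ to $y$. First I would fix a combinatorial geodesic $\gamma = x_0 x_1 \cdots x_k$ from $x = x_0$ to $y = x_k$, so that $D(x,y) = k$. For each $i \in \{0, \dots, k-1\}$, the edge $e_i = \{x_i, x_{i+1}\}$ is dual to a unique hyperplane $H_i = \{h_i, \overline{h_i}\}$, where we may label $h_i$ so that $x_{i+1} \in h_i$ and $x_i \in \overline{h_i}$ (using that the gate map $\g_{e_i}$ sends $x_i, x_{i+1}$ to themselves, since they are the endpoints of $e_i$). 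I want to show: (a) each $H_i$ separates $x$ from $y$; (b) the $H_i$ are pairwise distinct; and (c) every hyperplane separating $x$ and $y$ occurs as some $H_i$. Together these give a bijection $i \mapsto H_i$ between $\{0,\dots,k-1\}$ and the set of separating hyperplanes, proving $D(x,y)$ equals that cardinality, and the "more precisely" clause follows since distinct $i$ give distinct $H_i$, so each separating hyperplane is crossed exactly once along $\gamma$.

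For (a) and the key combinatorial input, I would use the gate map characterisation of halfspaces: for the edge $e_i$, the partition $V = \g_{e_i}^{-1}(x_i) \sqcup \g_{e_i}^{-1}(x_{i+1})$ is exactly $\overline{h_i} \sqcup h_i$. The crucial claim is a \emph{monotonicity} statement: along the geodesic $\gamma$, once one crosses into $h_i$ at time $i$ one never leaves, i.e.\ $x_j \in \overline{h_i}$ for $j \le i$ and $x_j \in h_i$ for $j \ge i+1$. This is where the geodesic hypothesis is essential: if $\gamma$ crossed the hyperplane $H_i$ twice, say leaving $h_i$ at some later time, then by the description of $D$ via the gate map (distance to $e_i$ differs by $1$ at each step, and crossing $H_i$ is the only way to change which side of $e_i$ you are on) the path $\gamma$ would backtrack with respect to the projection to $e_i$, contradicting that $\gamma$ realises the distance $k = D(x,y)$. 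Granting monotonicity, $x_0 = x \in \overline{h_i}$ and $x_k = y \in h_i$, so $h_i$ separates $y$ from $x$ — giving (a). Monotonicity also gives (b): if $H_i = H_j$ with $i < j$, then at step $i$ we enter a fixed side and stay there, so we cannot also enter it at step $j$, a contradiction.

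For (c), suppose a hyperplane $H = \{h, \overline h\}$ separates $x$ and $y$, say $x \in \overline h$, $y \in h$. Since $\gamma$ is a path from $\overline h$ to $h$, there is a smallest index $i$ with $x_{i+1} \in h$; then $x_i \in \overline h$, so the edge $e_i$ crosses $H$, whence $H = H_i$. (Here convexity of halfspaces is not even needed for this direction, though it will follow: if $x, y \in h$ then no hyperplane separates them on the nose that would force a geodesic vertex outside $h$, since every such vertex lies between two vertices of $h$ and the above argument shows any geodesic stays on the side it starts and ends on — this yields the convexity remark promised in the text.)

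The main obstacle is the monotonicity claim — everything else is bookkeeping. I expect to prove it either by the standard fact that gate maps to edges are $1$-Lipschitz and that $D(x_j, e(0)) = \min_{v \in e(0)} D(x_j, v)$ changes by exactly $\pm 1$ at each step of a geodesic, with a sign change only at the crossing time; or, more elementarily, by invoking the cited references (Sageev, Hagen) for the basic hyperplane combinatorics of $\cat0$ cube complexes and citing that a geodesic crosses each hyperplane at most once. Given the phrasing of the lemma statement ("In fact, this number is always either $0$ or $1$" already asserted just above), the intended proof likely just assembles these known facts; I would present it in the clean bijective form above, flagging monotonicity as the one place where "geodesic" is genuinely used.
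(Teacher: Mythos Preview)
The paper does not supply a proof of this lemma at all; it is stated with citations to Sageev and to Hagen and used as a black box. So there is nothing in the paper to compare your argument against beyond those references.

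Your plan is the standard argument and is correct in outline. The bijection you set up between edges of a fixed geodesic and separating hyperplanes, together with the three claims (a)--(c), is exactly how this is usually proved. You correctly isolate monotonicity (a geodesic crosses each hyperplane at most once) as the only nontrivial step. Your sketch of that step via the gate map is a little loose as written: the sentence about ``distance to $e_i$ differs by $1$ at each step'' conflates the gate map with a distance function, and the contradiction is not made fully explicit. The clean version is that the gate map $\g_e \colon V \to e(0)$ is $1$-Lipschitz and, crucially, does not increase distances, so if a geodesic $\gamma$ of length $k$ crossed $H$ at two distinct times one could use the carrier structure $\mathcal{N}(H) \cong H \times [0,1]$ (or a disk-diagram/bigon argument as in Sageev's original paper) to produce a strictly shorter path between the endpoints, contradicting minimality. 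If you are going to present a self-contained proof rather than cite, I would tighten exactly that paragraph; everything else in your plan is routine bookkeeping, as you say.
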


Because of Lemma~\ref{lem:dist:halfspaces}, the set of halfspaces
separating~$y$ from~$x$ will play an important role: We denote it
by~$[x, y]_{\h}$. Such sets are called \emph{$\h$-intervals}.  Notice
that it is possible that $[x, y]_{\h} = [x', y']_{\h}$ for distinct
pairs~$(x, y)$,~$(x', y')$.  The second part of
Lemma~\ref{lem:dist:halfspaces} implies that~$[x, y]_{\h} = [x,
  m]_{\h} \sqcup [m, y]_{\h}$ holds for every vertex~$m \in [x, y]$
belonging to an oriented geodesic segment between $x$ and~$y$. This
holds in particular when $m = m(x, y, z)$ for some third vertex~$z$.

\pagebreak

\subsubsection{Transversality}

\begin{defi}[transverse halfspaces]
\label{def:transversality_halfspaces}
  In the situation of Setup~\ref{setup:cat0}, 
  two halfspaces $h_1$ and~$h_2$ are \emph{transverse} if each of the
  four intersections $h_1 \cap h_2, \overline{h_1} \cap h_2, h_1 \cap
  \overline{h_2}$, and $\overline{h_1} \cap \overline{h_2}$ is
  non-empty (\cref{fig:transverse}).
   We then write~$h_1 \pitchfork h_2$ and also say that the pair $h_1, h_2$ is transverse.
\end{defi}

\begin{figure}
  \begin{center}
    \subcaptionbox{Transverse halfspaces.}{%
      \begin{tikzpicture}[x=1cm,y=1cm,thick]
        \begin{scope}[black!50]
          \squaregrid
        \end{scope}
        \halfspaces{(0,-0.5)}{180}{h_1}
        \def\halfspacecolor{\shalfspacecolor}
        \halfspaces{(0,0)}{90}{h_2}
    \end{tikzpicture}}
    \hfil
    \subcaptionbox{\raggedright Non-transverse half\-spaces.}{%
      \begin{tikzpicture}[x=1cm,y=1cm,thick]
        \begin{scope}[black!50]
          \squaregrid
        \end{scope}
        \halfspaces{(-1,0)}{90}{h_1}
        \def\halfspacecolor{\shalfspacecolor}
        \halfspaces{(1,0)}{90}{h_2}        
    \end{tikzpicture}}
  \end{center}

  \caption{(Non-)transverse halfspaces in the square grid.}
  \label{fig:transverse}
\end{figure}
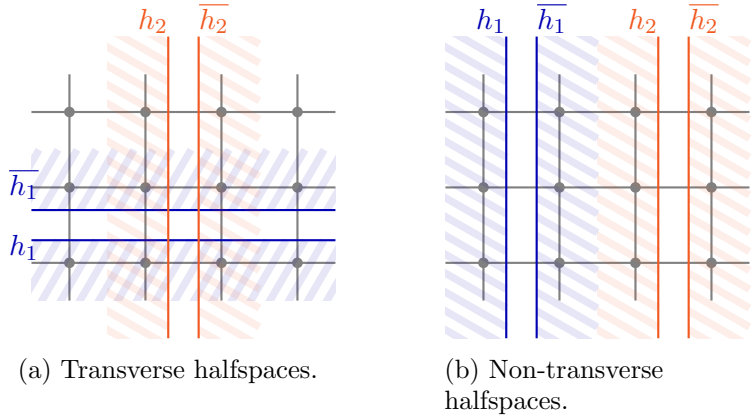

Whenever two halfspaces are transverse, this is witnessed in a quadrangle:

\begin{lemma}
\label{lem:quadrangle_from_transversality}
In the situation of Setup~\ref{setup:cat0}, let $h,k$ be halfspaces of~$X$ with~$h \pitchfork k$.
There is a quadrangle consisting of four edges~$e_{H}$, $f_{H}$,
    $e_{K}$, $f_{K}$ such that $e_{H}$, $f_{H}$ are dual to the
    hyperplane~$H\coloneqq \ls h, \overline{h} \rs$ and such that
    $e_{K}$, $f_{K}$ are dual to the hyperplane~$K\coloneqq \ls k,
    \overline{k} \rs$ (see \cref{fig:claim}).
\end{lemma}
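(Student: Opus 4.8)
The plan is to produce the quadrangle by hand, starting from a vertex in each of the four quarter-spaces (these are non-empty \emph{precisely because} $h\pitchfork k$) and ``straightening'' the quadrilateral they span; up to this straightening the statement is the classical fact that two transverse hyperplanes of a $\cat0$ cube complex are dual to two opposite edges of a common $2$-cube~\cite{Sageev1995}, which one could also simply quote. Concretely, pick vertices $a\in h\cap k$, $b\in\bar h\cap k$, $c\in\bar h\cap\bar k$ and $d\in h\cap\bar k$, and among all such quadruples choose one minimising $D(a,b)+D(b,c)+D(c,d)+D(d,a)$. Consecutive vertices lie in complementary halfspaces of $H$ (for $\{a,b\}$ and $\{c,d\}$) or of $K$ (for $\{b,c\}$ and $\{d,a\}$), so by Lemma~\ref{lem:dist:halfspaces} each summand is at least $1$; in particular the functional takes positive integer values and a minimiser exists.

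At a minimiser, replacing $a$ by the median $m(d,a,b)$ does not increase the functional: this median lies on all three intervals $[d,a]$, $[a,b]$, $[d,b]$, so neither of the two sides meeting at $a$ lengthens, and it remains in $h\cap k$ because $a,b\in k$ together with convexity of the halfspace $k$ forces $[a,b]\subseteq k$, while $a,d\in h$ and convexity of $h$ force $[a,d]\subseteq h$ (convexity of halfspaces is a consequence of Lemma~\ref{lem:dist:halfspaces}). Applying the analogous move cyclically to $b$, $c$, $d$, we may assume every vertex of the minimiser lies on the interval between its two neighbours. A direct analysis of this ``straightened'' configuration --- comparing, via Lemma~\ref{lem:dist:halfspaces}, the set of halfspaces separating $a$ from $c$ with the set separating $b$ from $d$ and using that both sets contain $H$ and $K$ --- then forces each of $D(a,b)$, $D(b,c)$, $D(c,d)$, $D(d,a)$ to equal $1$ (alternatively one invokes here the square characterisation of crossing hyperplanes~\cite{Sageev1995}). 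Hence $a\,b\,c\,d$ is a $4$-cycle in $X(1)$; by Lemma~\ref{lem:dist:halfspaces} each of its edges is dual to a single hyperplane, and since the edges $ab$ and $cd$ join $h$ to $\bar h$ they are dual to $H$, while $bc$ and $da$ are dual to $K$. Such a $4$-cycle is isometric to the $1$-skeleton of a square and so bounds a $2$-cube in $X$, giving the quadrangle of \cref{fig:claim} with $e_H,f_H$ the edges dual to $H$ and $e_K,f_K$ the edges dual to $K$.

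The main obstacle is the ``straightening'' step: the naive triangle-inequality estimates for the median move are exactly tight, so collapsing the straightened quadruple to an honest $4$-cycle really does require careful bookkeeping of which halfspaces separate which pairs among $a,b,c,d$ --- which is why in practice the cleanest route may be to cite the classical ``crossing hyperplanes span a square'' lemma outright and read off the four edges.
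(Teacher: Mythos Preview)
The paper does not prove this lemma; it is stated without proof as a standard fact about $\cat0$ cube complexes (the ``crossing hyperplanes span a square'' statement going back to Sageev), so your suggestion simply to cite the classical result is exactly what the paper does. Your sketch already gives more than the paper.

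Two remarks on the sketch itself. First, the cyclic median replacement works more smoothly than you worry: after replacing~$a$ by~$m(d,a,b)$ and then~$b$ by~$m(a,b,c)$, the new~$b$ lies in the old interval~$[a,b]$, and convexity of halfspaces shows that every halfspace separating the new~$a$ from~$d$ still contains the new~$b$; so the condition~$a\in[d,b]$ is preserved. Iterating once around the cycle gives all four median conditions simultaneously. Second, the real gap is the next step: a straightened quadruple need \emph{not} have perimeter~$4$ --- in~$\Z^2$ with~$h=\{x\ge 1\}$, $k=\{y\ge 1\}$, the quadruple~$(2,1),(0,1),(0,0),(2,0)$ is straightened but has perimeter~$6$. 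You must invoke minimality again. One clean finish: at a straightened minimiser, every~$\ell\in[a,b]_{\h}$ separates~$\{a,d\}$ from~$\{b,c\}$; if~$\ell\neq\bar h$, then gating the pair~$\{b,c\}$ (or~$\{a,d\}$, depending on which of~$\bar h\cap\bar\ell$, $h\cap\ell$ is nonempty) to the other side of~$\ell$ keeps each vertex in its quarter-space and strictly decreases the perimeter, contradicting minimality. Hence~$[a,b]_\h=\{\bar h\}$ and symmetrically~$[b,c]_\h=\{\bar k\}$, giving the square.
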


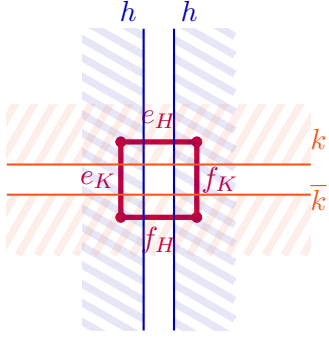
\begin{figure}
  \begin{center}
    \begin{tikzpicture}[x=1cm,y=1cm,thick]
      \begin{scope}[\hedgecolor]
        \hedge{(0,0)}{(1,0)}
        \hedge{(0,0)}{(0,1)}
        \hedge{(1,0)}{(1,1)}
        \hedge{(0,1)}{(1,1)}
        \draw {(0.5,1.3)} node {$e_H$};
        \draw {(0.5,-0.3)} node {$f_H$};
        \draw {(-0.3,0.5)} node {$e_K$};
        \draw {(1.3,0.5)} node {$f_K$};
      \end{scope}
      \halfspaces{(0.5,0.5)}{90}{h}
      \def\halfspacecolor{\shalfspacecolor}
      \halfspaces{(0.5,0.5)}{0}{k}
    \end{tikzpicture}
  \end{center}

  \caption{The quadrangle in Lemma~\ref{lem:quadrangle_from_transversality}}
  \label{fig:claim}
\end{figure}

Given a hyperplane $H = \{h, \overline{h}\}$, we denote by $\mathcal{N}(H)$ the \emph{carrier} of $H$,
namely the convex subcomplex of $X$ spanned by all edges that cross $H$.
We will use the following property of carriers:

\begin{lemma}
\label{lem:carrier}

Let $H$ be a hyperplane and let $\mathcal{N}(H)$ be its carrier.
Let $\gamma$ be a geodesic contained in $\mathcal{N}(H)$.
Then every hyperplane crossed by $\gamma$ is either equal to $H$ or transverse to $H$.

In particular, if both endpoints of $\gamma$ are contained in one side of $H$,
then every hyperplane crossed by $\gamma$ is transverse to $H$.
\end{lemma}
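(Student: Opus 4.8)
The plan is to argue directly from the definition of the carrier together with Lemma~\ref{lem:quadrangle_from_transversality}. Let $K$ be a hyperplane crossed by $\gamma$, and assume $K \neq H$; I must show $K \pitchfork H$, i.e.\ that all four intersections of the two sides of $H$ with the two sides of $K$ are non-empty. First I would pick an edge $f$ of $\gamma$ dual to $K$. Since $\gamma$ is contained in $\mathcal{N}(H)$, both endpoints $\alpha(f),\omega(f)$ of $f$ lie in $\mathcal{N}(H)$, and $\mathcal{N}(H)$ is by definition the subcomplex spanned by edges dual to $H$; the key geometric fact I will invoke (a standard property of carriers, which follows from the description $\mathcal{N}(H)$ as spanned by edges crossing $H$) is that every vertex of $\mathcal{N}(H)$ is an endpoint of an edge dual to $H$. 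So there is an edge $e$ dual to $H$ with, say, $\alpha(f)$ as one of its endpoints.

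Now consider the four vertices: $\alpha(f)$, $\omega(f)$ (differing by crossing $K$), and the two vertices $e(0)=\{\alpha(f), v\}$ with $v$ on the opposite side of $H$ from $\alpha(f)$. This already gives three of the four quadrants of the pair $(H,K)$ occupied by actual vertices: $\alpha(f)$ lies in (side of $H$ containing it) $\cap$ (side of $K$ containing it); $\omega(f)$ lies in the same side of $H$ but the opposite side of $K$; and $v$ lies in the opposite side of $H$ and the same side of $K$ as $\alpha(f)$ (since the edge $e$ is dual to $H$ and not to $K$, as $K\neq H$, crossing it changes the $H$-side but not the $K$-side). For the fourth quadrant I would use the median: the vertex $m(v, \omega(f), \text{—})$, or more simply, take a geodesic from $v$ to $\omega(f)$; it must cross $H$ (since $v,\omega(f)$ are on opposite sides) and $K$ (since they are on opposite sides), and along it there is a vertex on the far side of $H$ and far side of $K$ from $\alpha(f)$. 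Making this precise — concluding that some single vertex lies in the last quadrant — is the one spot that needs a small argument; I expect to use that the hyperplanes $H$ and $K$ are crossed in some order along that geodesic and that the vertex right after the \emph{second} of the two crossings lies in both far sides. This shows $H \pitchfork K$.

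For the \textbf{In particular} clause: if both endpoints of $\gamma$ lie on one side of $H$, then $\gamma$ crosses $H$ an even number of times, but by Lemma~\ref{lem:dist:halfspaces} a geodesic crosses any hyperplane at most once, hence $\gamma$ does not cross $H$ at all. So every hyperplane $K$ crossed by $\gamma$ satisfies $K \neq H$, and the first part gives $K \pitchfork H$.

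The main obstacle is purely the first part, and within it the step supplying the \emph{fourth} quadrant; the rest is bookkeeping with sides of hyperplanes. An alternative, possibly cleaner, route to the whole lemma: work with a single edge $f$ of $\gamma$ dual to $K$, lift $f$ into a square using the link/carrier structure — concretely, since $f \subset \mathcal{N}(H)$ and $\mathcal{N}(H)$ is convex with the property that the gate map to any edge dual to $H$ is defined, the edge $f$ together with an edge dual to $H$ at one endpoint spans a square (the two hyperplanes through a corner of a square are transverse), which immediately exhibits all four quadrants and hence $H\pitchfork K$. I would likely present this square-based version, citing Lemma~\ref{lem:quadrangle_from_transversality} in reverse (a square witnesses transversality) and the standard fact that the carrier of $H$ is tiled by squares, one factor of each being dual to $H$.
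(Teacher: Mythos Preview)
The paper states Lemma~\ref{lem:carrier} without proof, treating it as a standard fact about carriers in $\cat0$ cube complexes; so there is no argument to compare against, and your task is only to give a correct proof.

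Your alternative, square-based route is the right one and is essentially the standard argument: the carrier~$\mathcal{N}(H)$ splits as a product with one factor an interval dual to~$H$, so every edge~$f\subset\mathcal{N}(H)$ not dual to~$H$ spans a square whose other pair of edges is dual to~$H$; the four vertices of that square witness~$H\pitchfork K$. Your ``in particular'' clause is also fine.

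Your first approach, however, has a genuine slip at exactly the point you flagged. With the notation you set up ($\alpha(f)\in h\cap k'$, $\omega(f)\in h\cap\overline{k'}$, $v\in\overline{h}\cap k'$), a geodesic from~$v$ to~$\omega(f)$ crosses~$H$ and~$K$ once each, but the vertex \emph{after the second} crossing always lies in~$h\cap\overline{k'}$, the quadrant of~$\omega(f)$, not in the missing quadrant~$\overline{h}\cap\overline{k'}$. What is true is that the vertex \emph{between} the two crossings lies in~$\overline{h}\cap\overline{k'}$ if~$K$ is crossed first, but in~$h\cap k'$ if~$H$ is crossed first; you have no control over the order. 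The cheapest fix is to do at~$\omega(f)$ exactly what you did at~$\alpha(f)$: since~$\omega(f)\in\mathcal{N}(H)$, there is an edge~$e'$ dual to~$H$ with endpoint~$\omega(f)$; its other endpoint~$v'$ lies in~$\overline{h}$ (opposite side of~$H$) and in~$\overline{k'}$ (same side of~$K$ as~$\omega(f)$, since~$e'$ is dual to~$H\neq K$), giving the fourth quadrant directly. This is of course just the square argument in disguise, so I would present the square version and drop the geodesic detour.
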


\subsubsection{Finiteness conditions}

We will be working with \emph{finite-dimensional} $\cat0$-cube
complexes, where the dimension of~$X$ is the highest dimension of a
cube in~$X$.

Moreover, we will impose the condition that the complexes have
finite staircase length:

\begin{defi}[{\cite[Definition 4.14]{elia}}]
  Let $X$ be a $\cat0$ cube complex, let $\sigma \in \N$.  A
  \emph{length-$\sigma$ staircase} in $X$ is a pair~$(h_1 \supset
  \cdots \supset h_\sigma, k_1 \supset \cdots \supset k_\sigma)$
  of proper chains of halfspaces with the following properties
  (Figure~\ref{fig:staircase}):
  \begin{itemize}
  \item For all~$i \in \{1,\dots, \sigma\}$ and all~$j \in \{1,\dots,i-1\}$,
     we have~$h_i \pitchfork k_j$.
  \item For all~$i \in \{1,\dots, \sigma\}$,  we have~$h_i \supsetneq k_i$. 
  \end{itemize}
  The \emph{staircase length} of~$X$ is the maximal length of a
  staircase in~$X$ (or~$\infty$).
\end{defi}

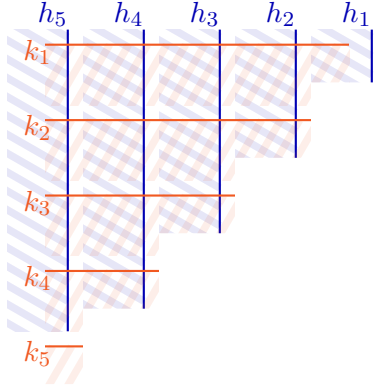
\begin{figure}
  \begin{center}
    \begin{tikzpicture}[x=1cm,y=1cm,thick]
      \clip (5,2.5) -- (5,1.3)
      -- (4,1.3) -- (4,0.3)
      -- (3,0.3) -- (3,-0.7)
      -- (2,-0.7) -- (2,-1.7)
      -- (1,-1.7) -- (1, -2.7)
      -- (1,-2.7) -- (0,-2.7)
      -- (0,2.5) -- cycle;
      \clip (0,-3) rectangle +(5,6);
      \foreach \i in {1,...,5} {%
        \halfspace{(6-\i,0)}{90}{h_{\i}}
      }
      \begin{scope}[shift={(2.5,3)}]
        \def\halfspacecolor{\shalfspacecolor}
        \foreach \j in {1,...,5} {%
          \halfspace{(0,-\j)}{180}{k_{\j}}
        }
      \end{scope}
    \end{tikzpicture}
  \end{center}
  
  \caption{A staircase, schematically}
  \label{fig:staircase}
\end{figure}

\begin{rem}
  The numbering differs slightly from Fioravanti's convention~\cite{elia},
  since this will make our arguments more transparent.
\end{rem}

\begin{example}
  \hfil
  \begin{itemize}
  \item Every tree has staircase length at most~$1$.
  \item Salvetti complexes have finite staircase length (Section~\ref{s:raag}).
  \item There exist finite-dimensional $\cat0$ cube complexes
    with infinite staircase length, e.g., the infinite staircase
    in Figure~\ref{fig:infstaircase}.
  \end{itemize}
\end{example}

\begin{figure}
  \begin{center}
    \begin{tikzpicture}[x=0.5cm,y=0.5cm,thick]
      \begin{scope}
      \fill[opacity=0.1] (0,0) -- (0,5)
      -- (5,5) -- (5,4) -- (4,4) -- (4,3) -- (3,3) -- (3,2) -- (2,2) -- (2,1) -- (1,1) -- (1,0)
      -- cycle;
      \draw[opacity=0.5] (5.5,5) -- (5,5) -- (5,4) -- (4,4) -- (4,3) -- (3,3) -- (3,2) -- (2,2) -- (2,1) -- (1,1) -- (1,0)
      -- (0,0) -- (0,-0.5) ;
      \foreach \j in {0,...,5} {
        \draw[opacity=0.5] (\j,5.5) -- (\j,\j);
        \draw[opacity=0.5] (-0.5,\j) -- (\j,\j);
      }
      \def\svertx#1{%
        \fill #1 circle (0.02cm);}
      \def\gddots#1{\begin{scope}[shift={#1}]
          \svertx{(-0.25,0.25)}
          \svertx{(0,0)}
          \svertx{(0.25,-0.25)}
      \end{scope}}
      \def\giddots#1{\begin{scope}[shift={#1}]
          \svertx{(-0.25,-0.25)}
          \svertx{(0,0)}
          \svertx{(0.25,0.25)}
      \end{scope}}
      \gddots{(-1,6)}
      \giddots{(6,6)}
      \giddots{(-1,-1)}
      \end{scope}
    \end{tikzpicture}
  \end{center}
  
  \caption{Beware of the infinite staircase!}
  \label{fig:infstaircase}
\end{figure}

\subsection{Median quasimorphisms and median classes}

Median quasimorphisms are a generalisation of counting quasimorphisms
(Example~\ref{exa:brooks}).
Instead of subwords in free groups, we count segments of crossed
halfspaces for groups acting on $\cat0$ cube complexes. We start with
the definition of segments in terms of halfspaces~\cite{FFT}.

\begin{defi}[tightly nested, segment]
  In the situation of Setup~\ref{setup:cat0}, 
  two halfspaces $h_1$ and~$h_2$ are \emph{tightly nested} if they are
  distinct, if $h_1 \supset h_2$, and if there is no other halfspace~$h$ such
  that~$h_1 \supset h \supset h_2$ (Figure~\ref{subfig:tight}).

  An \emph{${\h}$-segment} of length~$l \in \N$ is a sequence~$(h_1
  \supset \cdots \supset h_l)$ of tightly nested halfspaces
  (Figure~\ref{subfig:segment}).  The \emph{reverse} of~$s = ( h_1
  \supset \cdots \supset h_l )$ is~$\overline{s} \coloneqq (
  \overline{h_l} \supset \cdots \supset \overline{h_1} )$.  We denote
  by~$X_{\h}^{(l)}$ the set of all $\h$-segments of length~$l$ in~$X$.
  Given~$x, y \in V$, we write~$[x, y]_{\h}^{(l)}$ for the set of all
  segments of length~$l$ that are contained in~$[x, y]_{\h}$.

  A vertex~$x$ is said to be in the \emph{interior} of the segment~$s
  = ( h_1 \supset \cdots \supset h_l )$ if~$x \in h_1 \cap
  \overline{h_l}$ (Figure~\ref{subfig:segment}).

  The action of~$\Gamma$ on~$X$ induces an action of~$\Gamma$
  on~$X_{\h}^{(l)}$; the orbit of~$s \in X_{\h}^{(l)}$ is denoted
  by~$\Gamma s$, we call its elements \emph{translates} of~$s$.
\end{defi}

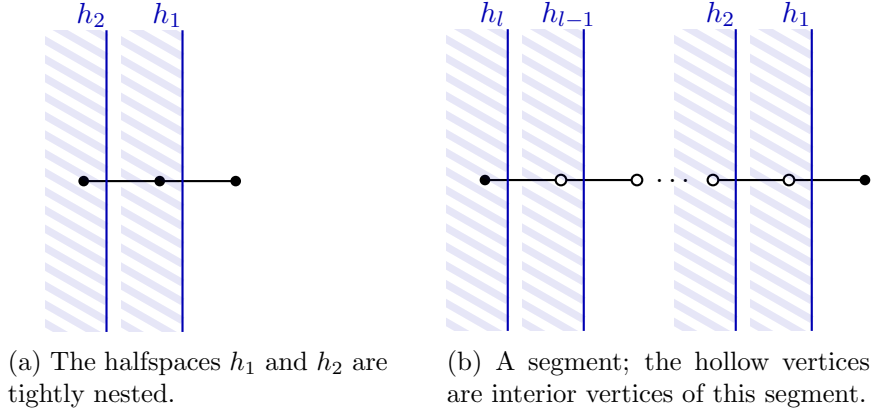
\begin{figure}
  \begin{center}
    \subcaptionbox{The half\-spaces~$h_1$ and~$h_2$ are tightly nested.%
      \label{subfig:tight}}{%
      \begin{tikzpicture}[x=1cm,y=1cm,thick]
        \clip (-1,-2) rectangle +(5,4.5);
        \draw (0,0) -- (2,0);
        \vertx{(0,0)}
        \vertx{(1,0)}
        \vertx{(2,0)}
        \halfspace{(0.5,0)}{90}{h_2}
        \halfspace{(1.5,0)}{90}{h_1}
    \end{tikzpicture}}
    \hfil
    \subcaptionbox{A segment; the hollow vertices are interior
      vertices of this segment.\label{subfig:segment}}{%
      \begin{tikzpicture}[x=1cm,y=1cm,thick]
        \draw (0,0) -- (2,0);
        \draw (2.5,0) node {$\dots$};
        \draw (3,0) -- (5,0);
        \vertx {(0,0)}
        \vertx {(5,0)}
        \hollowvertx{(1,0)}
        \hollowvertx{(2,0)}
        \hollowvertx{(3,0)}
        \hollowvertx{(4,0)}
        \halfspace{(0.5,0)}{90}{h_l}
        \halfspace{(1.5,0)}{90}{h_{l-1}}
        \halfspace{(3.5,0)}{90}{h_2}
        \halfspace{(4.5,0)}{90}{h_1}
    \end{tikzpicture}}
  \end{center}

  \caption{Tightly nested halfspaces and a segment.}
\end{figure}

In analogy with counting quasimorphisms, we introduce the
associated counting functions, which will then lead to median
quasimorphisms and classes.

\begin{defi}[median quasimorphism]\label{def:medianqc}
  In the situation of Setup~\ref{setup:cat0}, let $l \in \N$, and 
  let $s \in X_{\h}^{(l)}$.
  We define the \emph{median quasimorphism~$f_s \colon V \times V \to \R$ for~$s$}
  for all~$(x,y) \in V \times V$ to be the number of translates of~$s$
  in~$[x,y]_{\h}$ minus the number of translates of~$s$ in~$[y,x]_{\h}$.
\end{defi}

By construction, $f_s = f_{\gamma s}$ holds for all~$\gamma \in \Gamma$.
Moreover, the cochain~$f_s$ is $\Gamma$-invariant. 

\begin{rem}
\label{rem:inversion}
  Note that whenever $s$ and $\bar{s}$ are in the same $\Gamma$-orbit,
  it follows from the definition that~$f_s = 0$.  If this is not the
  case, then the following function~$\es\colon X_{\h}^{(l)} \to \ls -1,0,1 \rs$
  is well-defined:
  \begin{equation*}
	\es(t) = \begin{cases}
	1, & \text{if } \Gamma t = \Gamma s,  \\
	-1, & \text{if } \Gamma t = \Gamma \bar{s}, \\
	0, & \text{otherwise}
	\end{cases}
  \end{equation*}
  Then, $f_s$ is given by
  \begin{align}\label{formula:fs}
  \begin{split}
    f_s \colon V^2 & \to \R \\
    (x,y) & \longmapsto 
    \sum_{t\in [x,y]_{\h}^{(l)}} \es(t).
  \end{split}
  \end{align}
  We will use this formula for all of our computations.
\end{rem}

It is not immediately clear that $f_s$ indeed is a quasimorphism
of~$\Gamma \actson V$, i.e., that $\delta^1 f_s$ is bounded. Indeed,
this is not true in full generality:

\begin{example}\label{exa:dunbounded_dim}
  For~$n \geq 1$, let $X_n^- \coloneqq [-1, 0]^n$, $X_n^+ \coloneqq
  [0, 1]^n$, and let $X_n$ be the wedge~$X_n^- \vee X_n^+$, glued
  along the point~$0 \in X_n^- \cap X_n^+$.  Then $X_n$ can be viewed
  as a $\cat0$ cube complex, and together with the canonical
  inclusions~$X_n \hookrightarrow X_{n+1}$, we obtain a directed
  system whose direct limit~$X = X^+ \vee X^-$ is an
  infinite-dimensional $\cat0$ cube complex
  (Figure~\ref{fig:dunbounded}).  Moreover, $X$ admits an action of
  the group~$\Gamma \coloneqq S_\infty \times S_\infty$, where
  $S_\infty$ denotes the group of permutations of a countably infinite
  set with finite support.  Here the factors act on $X^-$ and $X^+$
  respectively by permuting the coordinates; in particular, $0$ is
  fixed by both factors.  Let $h^-$ be the halfspace $\{ (0, *) \in
  X^- \} \cup X^+$, and $h^+$ the halfspace $\{ (1, *) \in X^+
  \}$. Then $h^- \supset h^+$ are tightly nested, so they define a
  segment~$s$.  Let further $x_n^- \coloneqq (-1, \ldots, -1) \in
  X_n^- \subset X^-$ and $x_n^+ \coloneqq (1, \ldots, 1) \in X_n^+
  \subset X^+$. Then $s \in [x_n^-, x_n^+]_{\h}^{(2)}$.

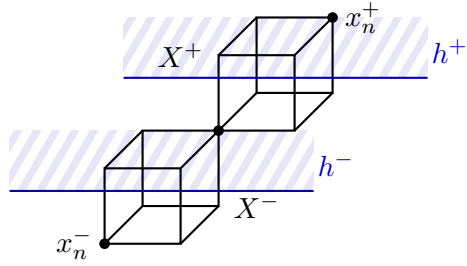
\begin{figure}
  \begin{center}
    \def\cube{%
      \draw (0,0) rectangle +(1,1);
      \draw (0.5,0.5) rectangle +(1,1);
      \foreach \x in {(0,0),(1,0),(0,1),(1,1)} {%
        \draw \x -- +(0.5,0.5);
      }
    }
    \begin{tikzpicture}[x=1cm,y=1cm,thick]
      \cube
      \vertx{(1.5,1.5)}
      \draw (1.9,1.5) node {$x_n^+$};
      \draw (-0.5,1) node {$X^+$};
      \begin{scope}[shift={(-1.5,-1.5)}]
        \cube
        \vertx{(0,0)}
        \draw (-0.4,0) node {$x_n^-$};
        \draw (2,0.5) node {$X^-$};
      \end{scope}
      \vertx{(0,0)}
      \halfspace{(-0.75,-1)}{0}{\quad h^-}
      \halfspace{(0.75,0.5)}{0}{\quad h^+}
    \end{tikzpicture}
  \end{center}

  \caption{The construction of Example~\ref{exa:dunbounded_dim}, schematically.}
  \label{fig:dunbounded}
\end{figure}

  Since the action of~$\Gamma$ fixes~$0$ and preserves $X^-$
  and~$X^+$, the orbit~$\Gamma s$ does not contain~$\bar{s}$. Moreover, no
  translate of~$\bar{s}$ belongs to~$[x_n^-, x_n^+]_{\h}^{(2)}$.  On
  the other hand, $[x_n^-, x_n^+]_{\h}^{(2)}$ contains many translates
  of~$s$: For instance, every element of~$S_n \times S_n \leq \Gamma$
  fixes~$x_n^-$, $0$ and~$x_n^+$, so it sends~$s$ to a segment
  in~$[x_n^-, x_n^+]_{\h}^{(2)}$.  But $h^-$ and $h^+$ are defined by
  conditions on the first coordinate only, so choosing elements
  in~$S_n$ that do not fix this coordinate, we obtain $n^2$~distinct
  translates of~$s$ that still lie in~$[x_n^-, x_n^+]_{\h}^{(2)}$.
  Finally, since~$\Gamma$ preserves $X^-$ and $X^+$, there can be
  no translate of~$s$ or~$\bar{s}$ inside~$[x_n^-, 0]^{(2)}_{\h}$ or~$[0, x_n^+]_{\h}^{(2)}$.

  The above discussion shows that
  \begin{align*}
  \delta^1 f_s(x_n^-, 0, x_n^+)
  &= f_s(0, x_n^+) - f_s(x_n^-, x_n^+) + f_s(x_n^-, 0) \\
  &= 0 - \bigl| \Gamma s \cap [x_n^-, x_n^+]_{\h}^{(2)} \bigr| + 0 \leq -n^2.
  \end{align*}
  Varying~$n$, we deduce that $\delta^1 f_s$ is unbounded.
\end{example}

\begin{rem}
  The issue in \cref{exa:dunbounded_dim} is not that $\Gamma$ is
  infinitely generated. The same argument can be run with the direct
  square of any group of permutations of a countably infinite set that
  contains~$S_\infty$.  For instance, we could choose the Houghton
  group~$H_k$~\cite{Houghton}, which is of type~$F_{k-1}$.
\end{rem}

The following examples shows that also for finite dimensional complexes, $f_s$ need not be a quasimorphism. The obstruction here is given by infinite staircase length. (Note that the example \cref{exa:dunbounded_dim} has finite staircase length, simply because there are no proper chains of halfspaces of length greater than 2.)

\begin{example}\label{exa:dunbounded_staircase}
	Let $X$ be the infinite staircase depicted in Figure~\ref{fig:infstaircase}.
	To describe points in $X$, we interpret it as being embedded in $\R^2$, with vertices lying on $\Z^2$ and the ``right-most points'' given by the diagonal of $\Z^2$, that is $V = \{(x,y)\in \Z^2 \mid x\leq y\}$.
	Let $\Gamma \coloneqq \Z$ act on $X$ by shifting along the diagonal of $\Z^2$, i.e., $\gamma\cdot (x,y) = (x+\gamma, y+\gamma)$ for $\gamma\in \Z$, $(x,y)\in V$.
	
	We define $h^x = \ls (x,y)\in V \mid y > 0 \rs$ and $h^y = \ls (x,y)\in V \mid x > 0 \rs$ to be the halfspaces that are dual to the edges $((0,0), (0,1))$ and $((0,1),(1,1))$, respectively, and point to the ``upper right''. These halfspaces are tightly nested $h^x \supset h^y$, so define a segment $s$.
	As the action of $\Gamma$ preserves the orientation of $X\subset \Z^2$, the orbit~$\Gamma s$ does not contain~$\bar{s}$. We define a sequence of triples in $V$ that witnesses that $\delta^1 f_s$ is unbounded: For $n \in \N$, let $x_n = (0,0)$, $y_n = (0,n+1)$, $z_n = (n+1,n+1)$.
	Clearly, $\gamma h^x = \ls (x,y)\in V \mid y > \gamma \rs$ contains $y_n$ if and only if it contains $z_n$ and $\gamma h^y = \ls (x,y)\in V \mid x > \gamma \rs$ contains $x_n$ if and only if it contains $y_n$. The same is true for $\bar{h^x}$ and $\bar{h^y}$. This implies that $f_s(y_n, z_n) = 0 = f_s(x_n, y_n)$.
	Similarly, it is not hard to see that $[x_n, z_n]_{\h}^{(2)}$ contains $n$ translates of $s$ and no translate of $\bar{s}$. Hence, we get
  \begin{align*}
  \delta^1 f_s(x_n, y_n, z_n)
  &= f_s(y_n, z_n) - f_s(x_n, z_n) + f_s(x_n, y_n) \\
  &= 0 - n + 0.
  \end{align*}
  Varying~$n$, we deduce that $\delta^1 f_s$ is unbounded.
\end{example}

A way to avoid both pathological examples above is to add the hypotheses
of finite-dimensionality and finite staircase length. It will turn out that imposing these hypotheses is actually sufficient for showing that $f_s$ is a quasimorphism.
As a
preparation, we show the following estimate, which will also be crucial in the proof of Theorem~\ref{thm:mainccc}:

\begin{lemma}
  \label{lem:fd:segments:fix}
  Let $X$ be a $\cat0$ cube complex of finite dimension~$d$ and with finite
  staircase length~$\sigma$. Then for all~$x, y \in V$ and all~$m \in
  [x, y]$, the number of segments in~$[x,y]_{\h}^{(l)}$ that
  contain~$m$ in their interior is at most~$(l-1) \sigma d^l$.
\end{lemma}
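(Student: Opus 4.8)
The plan is to bound, for a fixed vertex $m \in [x,y]$, the number of length-$l$ segments $t = (t_1 \supset \cdots \supset t_l)$ in $[x,y]_{\h}$ that have $m$ in their interior, i.e.\ $m \in t_1 \cap \overline{t_l}$. The starting observation is that all halfspaces in $[x,y]_{\h}$ separate $y$ from $x$, so they are \emph{coherently oriented}, and $t_1 \ni m$ while $\overline{t_1} \ni x$, whereas $t_l \ni y$ and $\overline{t_l} \ni m$. In particular every halfspace $t_i$ of such a segment separates $y$ from $m$ or $m$ from $x$; more precisely, since a segment is a chain, there is an index $i_0$ so that $t_1, \dots, t_{i_0-1}$ separate $m$ from $x$ and $t_{i_0}, \dots, t_l$ separate $y$ from $m$ — wait, one has to be a little careful, because $m$ being interior just says $m \in t_1 \setminus t_l$, and the intermediate $t_i$ could lie on either side of $m$. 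So I would instead record, for the segment $t$, the pair of ``consecutive across $m$'' halfspaces: there is a unique index $a$ with $m \in \overline{t_a}$ but $m \in t_{a-1}$ if $a\geq 2$ (and $t_1,\dots,t_{a-1}$ all contain $m$), meaning $m \in [x,y]$ lies between $t_{a-1}$ and $t_a$ in the chain. Actually, the cleanest bookkeeping: split $[x,y]_{\h} = [x,m]_{\h} \sqcup [m,y]_{\h}$ (valid by the second part of Lemma~\ref{lem:dist:halfspaces}, since $m \in [x,y]$). A segment $t$ with $m$ in its interior has some initial block $t_1 \supset \cdots \supset t_a$ contained in... no: $t_1 \ni m$ means $t_1 \in [m,y]_{\h}$, and $t_l \not\ni m$ with $t_l \ni y$ means $t_l \in [x,m]_{\h}$. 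Since the $t_i$ decrease, once one of them drops out of $[m,y]_{\h}$ it stays in $[x,m]_{\h}$: so there is $a \in \{1,\dots,l-1\}$ with $t_1,\dots,t_a \in [m,y]_{\h}$ and $t_{a+1},\dots,t_l \in [x,m]_{\h}$, and $t_a \supset t_{a+1}$ is a tightly nested pair with $t_a \in [m,y]_{\h}$, $t_{a+1}\in[x,m]_{\h}$.

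So the count is at most $\sum_{a=1}^{l-1} N_a$ where $N_a$ is the number of segments whose ``break'' is at position $a$. The key sub-claim, which is where the finiteness hypotheses enter, is: \emph{the number of tightly nested pairs $(h \supset k)$ with $h \in [m,y]_{\h}$ and $k \in [x,m]_{\h}$ is at most $\sigma$.} Granting this, I finish as follows: for each such pair $(t_a, t_{a+1})$, the halfspaces $t_1 \supset \cdots \supset t_{a-1}$ form a chain of halfspaces in $[m,y]_{\h}$ above $t_a$; but a chain in a $\cat0$ cube complex of dimension $d$ containing a common halfspace can be long, \emph{however} — here I use instead that any halfspace $t_i$ separating $y$ from $m$ either is transverse to very few others... hmm, that's not quite a clean bound either. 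Let me reorganize: fix the pair $(t_a,t_{a+1})$ (at most $\sigma$ choices). Each of the remaining $l-1$ slots $t_i$ (for $i \neq a, a+1$) is a halfspace that is \emph{comparable} (nested either above $t_a$ or below $t_{a+1}$) with the fixed pair; the number of halfspaces nested between two given tightly-nested-adjacent... no. The honest route: I expect the intended argument is that once we fix the tightly nested pair $(t_a,t_{a+1})$ straddling $m$ (at most $\sigma$ by the staircase bound — because a chain of such straddling pairs would, after intersecting with carriers, produce transversalities witnessing a staircase of that length, using Lemma~\ref{lem:carrier} and Lemma~\ref{lem:quadrangle_from_transversality}), the full segment is determined by choosing, for each of the $l$ positions, a halfspace, and at each step there are at most $d$ choices for the next tightly-nested halfspace, because tightly nested halfspaces below a fixed $h$ correspond to edges in a cube of dimension $\leq d$ adjacent to the carrier — giving the factor $d^l$. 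Combining: $(l-1)$ choices of break position, $\sigma$ choices of the straddling pair, and $d^l$ for propagating the chain in both directions yields the bound $(l-1)\sigma d^l$.

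The main obstacle is the sub-claim that there are at most $\sigma$ tightly nested pairs $h \supset k$ with $h$ separating $y$ from $m$ and $k$ separating $m$ from $x$ — i.e.\ relating such ``straddling'' pairs to genuine staircases. The idea: given two such pairs $(h \supset k)$ and $(h' \supset k')$ with, say, $h \supsetneq h'$ (any two distinct halfspaces of $[m,y]_{\h}$ are nested or transverse; if they were all pairwise nested we'd get a long chain, which is fine and handled separately, so the interesting case is a chain), one shows $h' \pitchfork k$ using convexity of halfspaces together with the fact that $m \in h \cap \overline{h'} \cap \overline k$, $y \in h' \cap k \cap h$, etc., checking the four quadrants are nonempty — this is a short point-chasing argument with the gate maps and Lemma~\ref{lem:dist:halfspaces}. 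That transversality pattern, together with $h_i \supsetneq k_i$ (which is exactly $h \supsetneq k$ in our pairs, up to reindexing the chain in decreasing order), is precisely the definition of a length-$\sigma$ staircase, so a chain of $\sigma+1$ straddling pairs would violate finiteness of the staircase length. I would also need the separate, easier remark that a pure chain $h_1 \supsetneq \cdots$ of straddling pairs sharing a comparison with a fixed pair cannot be too long — but actually the transversality argument covers the general position of the pairs, so the whole count of straddling pairs is $\le \sigma$. The factor $d^l$ bound — at most $d$ tightly nested halfspaces immediately below any given halfspace — should follow from the link condition / the structure of the carrier, and I would cite or quickly prove it from the $\cat0$ cube complex basics recalled above.
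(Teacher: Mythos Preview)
Your overall strategy matches the paper's: split at the break index~$a\in\{1,\dots,l-1\}$, bound the number of ``straddling'' tightly nested pairs $(t_a \supset t_{a+1})$ with $m$ between them, then propagate outward with at most $d$ choices per remaining slot. The problem is your central sub-claim: you assert that the number of tightly nested pairs $h \supset k$ with $m \in h \cap \overline{k}$ (and $h,k \in [x,y]_{\h}$) is at most~$\sigma$. This is false.

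Take $X = \mathbb{Z}^2$ (the standard square grid), which has dimension~$d=2$ and staircase length~$\sigma=1$. Let $x=(0,0)$, $y=(2,2)$, $m=(1,1)$. There are two straddling pairs, one in each coordinate direction, so the count is $2 > \sigma = 1$. Your argument breaks precisely where you try to order the straddling pairs so that $h \supsetneq h'$: two halfspaces in $[x,m]_{\h}$ can perfectly well be transverse rather than nested, so you cannot force them into a single chain feeding a staircase.

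What the paper does instead is bound the straddling pairs by~$\sigma d^2$, not~$\sigma$. It uses the fact that the halfspaces separating $x$ from $y$ decompose into at most $d$ chains, so each straddling pair lies in one of $d^2$ ``chain-types''; pigeonhole then gives $\sigma+1$ pairs whose first components form a chain \emph{and} whose second components form a chain, and only then does the staircase contradiction go through. With this corrected bound, the propagation step gives $d^{l-2}$ (not $d^l$: two of the $l$ slots are already fixed by the straddling pair), and the total is $(l-1)\cdot \sigma d^2 \cdot d^{l-2} = (l-1)\sigma d^l$. Your final number happens to be right only because your undercount $\sigma$ and your overcount $d^l$ cancel; neither intermediate claim is correct as stated.

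Two smaller points. First, your bookkeeping has the sides reversed: $t_1 \ni m$ and $t_1 \in [x,y]_{\h}$ means $t_1$ separates $m$ from $x$, so $t_1 \in [x,m]_{\h}$, not $[m,y]_{\h}$. Second, your justification for ``at most $d$ choices for the next tightly nested halfspace'' is vague; the clean argument (as in the paper) is that the candidates for $t_{j+1}$ are maximal elements of the interval of halfspaces contained in $t_j$ and containing $y$, maximal elements are pairwise transverse, and $X$ has dimension~$d$.
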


\begin{proof}
  We start by showing the lemma in the case~$l = 2$.  The statement
  gives the bound~$\sigma d^2$ on the number of pairs of tightly
  nested halfspaces~$h \supset k$ such that $x \in
  \overline{h}$, $m \in h \cap \overline{k}$, and $y \in k$.

  Let $B$ be the set of all such pairs, and \emph{assume}
  by contradiction that~$|B| > \sigma d^2$.
  The set of all halfspaces separating~$x$ from~$y$ decomposes
  into a disjoint union of $d$ (possibly empty)
  chains~\cite[Lemma~1.16]{propertyA}. Hence, each pair in~$B$
  belongs to one of~$d^2$ types of chain-memberships of the
  two components. 
  By the pigeonhole principle, there exist $\sigma +1$~distinct
  pairs in~$B$ such that the first components form a
  chain and simultaneously also the second components form
  a chain.
  We number the pairs~$(h_1,
  k_1), \ldots, (h_{\sigma+1}, k_{\sigma+1})$ in such a way that $h_1
  \supset h_2 \supset \cdots \supset h_{\sigma} \supset h_{\sigma+1}$.
  

  First, we claim that $k_i \supset k_{i+1}$, for all~$i \in
  \{1,\dots,\sigma\}$. As the second components form a
  chain, we have $k_i \supset k_{i+1}$ or~$k_{i+1} \supset k_i$.
  However, in the latter case, we would obtain the
  chain~$h_i \supset h_{i+1} \supset k_{i+1} \supset k_i$.
  Because $(h_i, k_i)$ is a tightly nested pair and $h_{i+1} \neq k_{i+1}$,
  this would imply~$h_i = h_{i+1}$ and $k_{i + 1} = k_i$, which
  would contradict that the pairs~$(h_i,k_i)$ and $(h_{i+1}, k_{i+1})$
  are different. Therefore, $k_i \supset k_{i+1}$.
  
  We obtained the inclusions $h_1 \supset \cdots \supset h_{\sigma+1}$
  and $k_1 \supset \cdots \supset k_{\sigma+1}$.  Next we show that
  they are all proper:  \emph{Assume} for a contradiction
  that there is an~$i \in \{1,\dots, \sigma\}$ with~$h_i = h_{i+1}$
  (the case~$k_i = k_{i+1}$ is symmetric).
  Then $k_i \neq k_{i+1}$, since the pairs~$(h_i,k_i)$ and
  $(h_{i+1}, k_{i+1})$ are different. Thus, $h_{i+1} = h_i
  \supsetneq k_i \supsetneq k_{i+1}$, which contradicts that $h_{i+1}
  \supset k_{i+1}$ are tightly nested.

  Finally, we show that $h_i \pitchfork k_j$ for all~$i \in \{1,\dots,
  \sigma+1\}$ and all~$j \in \{1,\dots, i-1\}$.  This will show that
  $h_1 \supset \cdots \supset h_{\sigma+1}$ and $k_1 \supset \cdots
  \supset k_{\sigma+1}$ form a staircase of length~$\sigma+1$,
  contradicting the hypothesis that $X$ has staircase length~$\sigma$.
  So, let $i \in \{1,\dots, \sigma+1\}$ and $j \in \{1,\dots, i-1\}$.
  Then $\overline {h_i} \cap \overline {k_j} \neq \emptyset$ (because
  of~$x$) and $h_i \cap \overline k_j \neq \emptyset$ (because of~$m$)
  as well as $h_i \cap k_j \neq \emptyset$ (because of~$y$). It
  remains to show that $\overline h_i \cap k_j$ is non-empty. If
  $\overline h_i \cap k_j$ were empty, then we would have~$h_i \supset
  k_j$, and thus $h_j \supsetneq h_i \supsetneq k_j$, which
  contradicts that $h_j \supsetneq k_j$ are tightly nested. Hence,
  $\overline h_i \cap k_j \neq \emptyset$.  This concludes the proof
  in the case~$l=2$.

  Now we prove the general case.
  Consider a segment $t = (h_1 \supset \cdots
  \supset h_{l}) \in [x, y]_{\h}^{(l)}$. Since $m \in [x, y]$,
  there exists~$i\in \{1,\ldots, l-1\}$ such that $m \in h_i \cap \overline{h_{i+1}}$, and the previous case gives at most $\sigma d^2$ options for the pair $(h_i, h_{i+1})$.
  We claim that in addition to the $l-1$ options for $i$, there are at most $d$ options for each of the remaining $(l-2)$ halfspaces.
  Indeed, assume that we have already chosen $h_{i+2}, \ldots, h_j$ and we need to choose $h_{j+1}$.
  The latter needs to lie in $[\overline{h_j}, y]_{\h}$, that is, the set of halfspaces containing $y$ and contained in $h_j$.
  Moreover, it needs to be a maximal element of this set, because $(h_j, h_{j+1})$ must be tightly nested.
  But maximal elements are pairwise transverse, and $X$ is $d$-dimensional, so there are at most $d$ options for $h_{j+1}$.
  The same argument, assuming that $h_j, \ldots, h_{i-1}$ have been chosen, gives at most $d$ options for $h_{j-1}$, and concludes the proof.
\end{proof}

\begin{prop}[defect estimate]
  \label{prop:fs:qm}
  In the situation of Setup~\ref{setup:cat0}, let $X$ be a
  $\cat0$-cube complex of dimension~$d$
  and finite staircase length~$\sigma$.
  Let $l \in \N$ and
  let $s \in X_{\h}^{(l)}$ be a segment. Then $\delta^1 f_s \colon V^3
  \to \R$ is bounded.  More precisely,
  \[ \| \delta^1 f_s \|_\infty \leq 3(l-1) \sigma d^l.
  \]
\end{prop}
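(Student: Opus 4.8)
The plan is to fix vertices $x,y,z \in V$, set $m \coloneqq m(x,y,z)$, and organise the whole computation of $\delta^1 f_s(x,y,z) = f_s(y,z) - f_s(x,z) + f_s(x,y)$ around $m$, exploiting that $m \in [x,y] \cap [y,z] \cap [x,z]$ by the median property. For each of the three pairs $(a,b) \in \{(x,y),(y,z),(x,z)\}$ we have the splitting $[a,b]_\h = [a,m]_\h \sqcup [m,b]_\h$ of $\h$-intervals from the remark following Lemma~\ref{lem:dist:halfspaces}, and the first step is to upgrade this to a splitting at the level of $\h$-segments.

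Concretely, let $t = (h_1 \supset \cdots \supset h_l)$ be a segment all of whose halfspaces lie in $[a,b]_\h$. Since the $h_i$ are nested, the set of indices $i$ with $m \in h_i$ is downward closed, hence a prefix $\{1,\dots,k\}$; for these indices $h_i \in [a,m]_\h$, and for the remaining ones $h_i \in [m,b]_\h$. So exactly one of three things happens: $t$ lies entirely in $[a,m]_\h$ (case $k=l$), $t$ lies entirely in $[m,b]_\h$ (case $k=0$), or $m \in h_1 \cap \overline{h_l}$, i.e.\ $m$ lies in the interior of $t$ (case $1 \le k \le l-1$). These cases partition $[a,b]_\h^{(l)}$, so formula~\eqref{formula:fs} gives
\[
  f_s(a,b) = f_s(a,m) + f_s(m,b) + R(a,b,m), \qquad
  R(a,b,m) \coloneqq \sum_{\substack{t \in [a,b]_\h^{(l)}\\ m \in \mathrm{int}(t)}} \es(t).
\]

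Next I substitute this into $\delta^1 f_s(x,y,z)$. The $f_s(m,z)$ and $f_s(x,m)$ contributions cancel in pairs, and $f_s(y,m) + f_s(m,y) = 0$ because $f_s$ is antisymmetric: the reversal $t \mapsto \overline t$ is a bijection $[a,b]_\h^{(l)} \to [b,a]_\h^{(l)}$ with $\es(\overline t) = -\es(t)$. What survives is $\delta^1 f_s(x,y,z) = R(y,z,m) - R(x,z,m) + R(x,y,m)$. Since $|\es| \le 1$, each $|R(a,b,m)|$ is at most the number of segments in $[a,b]_\h^{(l)}$ containing $m$ in their interior, which is $\le (l-1)\sigma d^l$ by Lemma~\ref{lem:fd:segments:fix}; the triangle inequality then yields $\|\delta^1 f_s\|_\infty \le 3(l-1)\sigma d^l$.

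The finite-dimensionality and finite staircase length hypotheses enter only through Lemma~\ref{lem:fd:segments:fix}, so the remaining work is purely combinatorial bookkeeping. I expect the one point that needs care to be the first step: verifying that ``$t$ straddles $m$'' coincides exactly with ``$m$ lies in the interior of $t$'', and that the three cases genuinely partition $[a,b]_\h^{(l)}$ with no double counting.
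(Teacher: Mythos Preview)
Your proof is correct and follows essentially the same approach as the paper: split each $f_s(a,b)$ through the median $m$, observe that the only segments not captured by $f_s(a,m)+f_s(m,b)$ are those containing $m$ in their interior, bound these via Lemma~\ref{lem:fd:segments:fix}, and use antisymmetry of $f_s$ to cancel the remaining terms. Your treatment of the trichotomy (the ``downward closed'' argument for the index set $\{i : m \in h_i\}$) is in fact more explicit than the paper's; the only minor omission is the trivial case $\Gamma s = \Gamma \bar s$, where $f_s=0$ and formula~\eqref{formula:fs} is not needed.
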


\begin{proof}
  The statement is obvious if $f_s = 0$, so we may assume that $\Gamma
  s \neq \Gamma \bar{s}$ and use Equation~(\ref{formula:fs}) from
  Remark~\ref{rem:inversion}.
  As a first step, we consider the following situation: 
  Let $x, y \in V$, and let $m \in [x, y]$. Then $[x, y]_{\h} =
  [x, m]_{\h} \sqcup [m, y]_{\h}$, and thus
  \begin{align*}
    f_s(x, y) &= \sum\limits_{t \in [x, y]_{\h}^{(l)}} \es(t) \\
    &= \sum\limits_{t \in [x, m]_{\h}^{(l)}} \es(t) + \sum\limits_{t \in [m, y]_{\h}^{(l)}} \es(t)
    + \sum\limits_{t \in [x, y]_{\h}^{(l)} \setminus [x, m]_{\h}^{(l)} \cup [m, y]_{\h}^{(l)}} \es(t) \\
    &= f_s(x, m) + f_s(m, y) + \sum\limits_{t \in [x, y]_{\h}^{(l)} \setminus [x, m]_{\h}^{(l)} \cup [m, y]_{\h}^{(l)}} \es(t).
  \end{align*}
  A segment~$t$ appears in the last sum only if $m$ belongs to the
  interior of~$t$ (Figure~\ref{fig:fs:qm}).  Therefore,
  Lemma~\ref{lem:fd:segments:fix} shows that there are at
  most~$(l-1) \sigma d^l$
  terms in this sum.

  For the general case, let $x, y, z \in V$ and $m \coloneqq m(x, y, z)$. Then
  the previous estimate shows that 
  \begin{align*}
    \bigl| \delta^1 f_s(x, y, z) \bigr|
    &= \bigl| f_s(y, z) - f_s(x, z) + f_s(x, y) \bigr| \\
    &\leq 3 \cdot (l-1) \sigma d^l
     + A ,
  \end{align*}
  where the term
  \[ A := \bigl|(f_s(y, m) + f_s(m, z)) - (f_s(x, m) + f_s(m, z)) + (f_s(x, m) + f_s(m, y))\bigr| 
  \]
  vanishes since $f_s$ is antisymmetric.
\end{proof}

\begin{figure}
  \begin{center}
    \subcaptionbox{}{%
      \begin{tikzpicture}[x=1cm,y=1cm,thick]
      \vertx{(0,0)}
      \draw {(-0.3,0)} node {$y$};
      \vertx{(5,2)}
      \draw {(5.3,2)} node {$x$};
      \draw (0,0) -- (1,0) -- (1,1) -- (4,1) -- (4,2) -- (5,2);
      \begin{scope}[color=\hedgecolor]
        \vertx{(2,1)}
        \draw (2,0.7) node {$m$};
        \fill[opacity=0.1] (2,1) circle (0.9);
      \end{scope}
    \end{tikzpicture}}
    \hfil
    \subcaptionbox{}{%
      \begin{tikzpicture}[x=1cm,y=1cm,thick]
      \vertx{(0,0)}
      \draw {(-0.3,0)} node {$y$};
      \vertx{(5,2)}
      \draw {(5.3,2)} node {$x$};
      \draw (0,0) -- (1,0) -- (1,1) -- (4,1) -- (4,2) -- (5,2);
      \vertx{(2,2)}
      \draw (2,1) -- (2,2);
      \draw (2.3,2) node {$z$};
      \begin{scope}[color=\hedgecolor]
        \vertx{(2,1)}
        \draw (2,0.7) node {$m$};
        \fill[opacity=0.1] (2,1) circle (0.9);
      \end{scope}      
    \end{tikzpicture}}
  \end{center}

  \caption{The critical areas in the two cases of the proof of \cref{prop:fs:qm}.}
  \label{fig:fs:qm}
\end{figure}
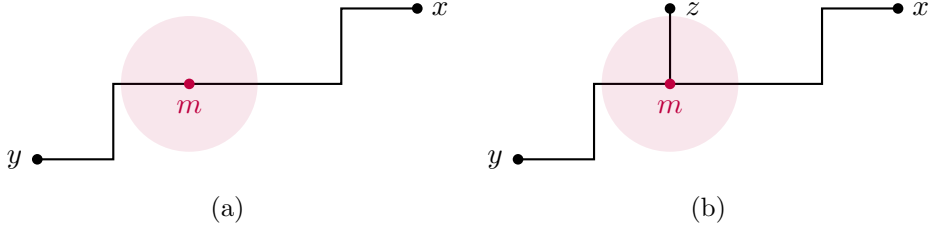

It is instructive to consider the case of a (big) Brooks quasimorphism $H_w$.
Thn $d = \sigma = 1$, $l = |w|$ and the $\ell^\infty$-norm of the coboundary coincides with the defect.
Hence, the bound $3(|w|-1)$ from Proposition~\ref{prop:fs:qm} matches the bound from \cref{ss:qm}.

\begin{defi}[median class]
  In the situation of Proposition~\ref{prop:fs:qm},
  by Remark \ref{rem:qmbc}),
  we obtain a
  bounded cohomology class~$[\delta^1f_s] \in \HH_{\Gamma,b}^2(X;\R)$.
  We call this the \emph{median class} of~$s$.
\end{defi}

\begin{rem}\label{rem:ueber}
  Related bounded cohomology classes appeared in the work of Chatterji,
  Fern\'os, and Iozzi~\cite{CFI}: Note that in that case too, the
  boundedness of the relevant classes is achieved by working in the
  finite-dimensional setting and additional restrictions on the segments.
  Similarly to these restrictions on the segments, we could also
  replace the hypothesis on finite staircase length by imposing more
  refined constraints on the segments than consecutive halfspaces
  being ``tightly nested''.
  
  Also, non-overlapping versions of the quasimorphisms by Chatterji,
  Fern\'os, and Iozzi were considered~\cite{FFT,FFTsimp}; in that
  situation, the finite-dimensionality hypothesis is not needed, and
  the defect is uniformly bounded.
\end{rem}

%

\subsection{Cup products of median classes with non-transverse classes}
\label{ss:cupccc}

In general, cup products of median classes can be non-trivial:

\begin{example}
  \label{ex:FxF}
  Let $F$ be a non-abelian free group and let $T$ be a Cayley tree
  of~$F$.  We consider the product action~$F \times F \actson X \coloneqq T
  \times T$ of the translation action~$F \actson T$. Then $X$ carries
  a canonical structure of a $\cat0$ cube complex.
  
  Let $\varphi \in \HH^2_b(F;\R) \setminus \{0\}$. For the canonical
  projections~$p_1, p_1 \colon F \times F \to F$, we then
  obtain~\cite[Proposition~3.3]{clara}
  \[ \HH^2_b(p_1;\R)(\varphi) \cup \HH^2_b(p_2;\R)(\varphi) \neq 0
  \]
  in~$\HH^4_{b}(F \times F;\R) \cong \HH^4_{F \times F,b}(X;\R)$. 
  Moreover, if $\varphi$ is induced by the Brooks
  quasimorphism of~$w \in F$, then $\HH^2_b(p_i;\R)(\varphi)$
  are the classes given by the median quasimorphisms of the segments
  induced by~$w$ on~$X$ via the two factors~$T$ in~$X$.

  More generally, let $X \coloneqq T_1 \times T_2$ be a product of two
  trees. Every edge~$e$ in~$X$ is of the form $e_1 \times x_2$, where
  $e_1$ is an edge in $T_1$ and $x_2$ is a vertex in $T_2$, or the
  other way around.  If $h_1, \overline{h_1}$ are the halfspaces dual
  to~$e_1$ in~$T_1$, then $h_1 \times T_2$ and $\overline{h_1} \times
  T_2$ are the halfspaces dual to~$e$ in~$X$.  Therefore an
  $\h$-segment~$s_i$ in~$T_i$ defines a segment~$\widehat{s_i}$
  in~$X$.

  Now suppose that $\Gamma$ acts on each $T_i$, and equip $X$ with the
  product action of~$\Gamma \times \Gamma$.  Then $[\delta^1
    f_{\widehat{s_i}}] \in \HH^2_{\Gamma, b}(X; \R)$ is the pullback
  of~$[\delta^1 f_{s_i}] \in \HH^2_{\Gamma, b}(T_i; \R)$ under the
  projection~$X \to T_i$. The same arguments as for the free group case 
  show: If both $[\delta^1 f_{s_i}]$ are non-trivial, then
  \[ [\delta^1 f_{\widehat{s_1}}] \cup [\delta^1 f_{\widehat{s_2}}] \neq 0
  \in \HH^4_{\Gamma\times \Gamma,b}(X;\R),
  \]
  so the cup product is non-trivial.
\end{example}

We will prove the vanishing of cup products under an additional
assumption that avoids the above behaviour.

\begin{figure}
  \begin{center}
    \begin{tikzpicture}[x=1cm,y=1cm,thick]
      \draw (0,-1) -- +(1,0);
      \vertx{(1,-1)}
      \draw (5,-1) -- +(-1,0);
      \draw (5,1) -- +(-1,0);
      \vertx{(4,1)}
      \vertx{(4,-1)}
      \draw (2,0) node {$\dots$};
      \begin{scope}[\hedgecolor]
        \vertx{(0,-1)}
        \draw {(-1,-1)} node[anchor=east] {a tail of~$s$};
        \vertx{(5,-1)}
        \vertx{(5,1)}
        \draw {(5,-1)} node[anchor=west] {another head of~$s$};
        \draw {(5,1)} node[anchor=west] {a head of~$s$};
      \end{scope}
      \halfspace{(4.5,0)}{90}{h_1}
      \halfspace{(3.5,0)}{90}{h_2}
      \halfspace{(1.5,0)}{90}{h_{l-1}}
      \halfspace{(0.5,0)}{90}{h_l}
    \end{tikzpicture}
  \end{center}

  \caption{Heads and tails of segments.}
  \label{fig:headstails}
\end{figure}
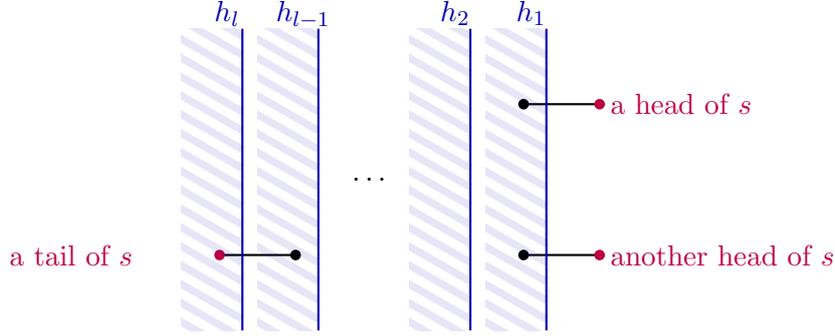

\begin{defi}[heads/tails]
\label{defi:headstails}
  In the situation of Setup~\ref{setup:cat0}, 
  let $s = ( h_1 \supset \cdots \supset h_l ) \in X_{\h}^{(l)}$.  We
  say that~$\alpha \in V$ is a \emph{head} of~$s$ if $\alpha
  \in \overline{h_1}$ and there exists an
  edge dual to~$h_1$ that has $\alpha$ as one of its endpoints
  (Figure~\ref{fig:headstails}).
  We say that~$\omega \in V$ is a \emph{tail}
  of~$s$ if~$\omega \in h_l$ and there exists an edge dual to~$h_l$ 
  that has $\omega$ as one of its endpoints.

  We let $\alpha(s)$ denote the set of heads of~$s$ and we let
  $\omega(s)$ denote the set of tails of~$s$. By definition,
  $\alpha(\overline{s}) = \omega(s)$ and $\omega(\overline{s}) =
  \alpha(s)$.
\end{defi}

\begin{defi}[non-transverse]
  \label{defi:nontransverse}
  In the situation of Setup~\ref{setup:cat0}, 
  let $s \in X_{\h}^{(l)}$ and let $\kappa \in \CC_{\Gamma, b}^n(X,
  \R)$.  We say that $\kappa$ and $s$ are \emph{non-transverse} if for
  all~$x_1, \ldots, x_n \in V$, the value of~$\kappa(\alpha, x_1,
  \ldots, x_n)$ is constant over all~$\alpha \in \alpha(s)$, and the
  value of~$\kappa(\omega, x_1, \ldots, x_n)$ is constant over
  all~$\omega \in \omega(s)$.  Given a set~$S \subset X_{\h}^{(l)}$, we
  say that $\kappa$ and~$S$ are \emph{non-transverse} if $\kappa$
  and~$s$ are non-transverse for all~$s \in S$.

  We define similarly non-transversality with respect to equivariant
  bounded cohomology classes, where a class has the property if it
  admits a representative that does.
\end{defi}

By symmetry of the definitions of heads and tails, $\kappa$
is transverse to~$\Gamma s$ if and only if it is transverse to~$\Gamma
\bar{s}$

\begin{rem}
  \label{rem:notnontransverse}
  In the square grid (with the translation action by~$\Z^2$), we
  consider two ``orthogonal'' segments~$\widehat{s_1}$ and $\widehat{s_2}$
  as in Figure~\ref{fig:notnontransverse}. Then, the
  cochain~$\delta^1 f_{\widehat{s_2}}$ and the orbit~$\Z^2 \widehat{s_1}$
  are not non-transverse.
    
  Similarly, a straightforward computation shows that 
  in Example~\ref{ex:FxF}, the
  cochain~$\delta^1 f_{\widehat{s_2}}$ and the orbit~$\Gamma
  \widehat{s_1}$ are not non-transverse.
\end{rem}

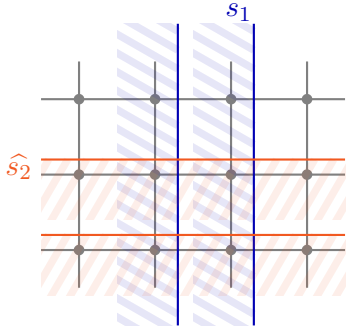
\begin{figure}
  \begin{center}
    \begin{tikzpicture}[x=1cm,y=1cm,thick]
      \begin{scope}[black!50]
        \squaregrid
      \end{scope}
      \halfspace{(1,0)}{90}{\widehat{s_1}}
      \halfspace{(0,0)}{90}{}
      \def\halfspacecolor{\shalfspacecolor}
      \begin{scope}[shift={(0,-0.6)}]
      \halfspace{(0,1)}{180}{\widehat{s_2}\quad}
      \halfspace{(0,0)}{180}{}
      \end{scope}
    \end{tikzpicture}
  \end{center}

  \caption{Not non-transversality in the square grid (\cref{rem:notnontransverse}).}
  \label{fig:notnontransverse}
\end{figure}

The main point of Definition \ref{defi:nontransverse} is that it
allows to define the auxiliary map~$\tilde{\kappa}$ in the proof
below, similarly to the auxiliary maps used in previous work
on cup products of quasimorphisms.

\begin{thm}\label{thm:mainccc}
  Let $\Gamma \actson X$ be an action of a group~$\Gamma$ on a
  finite-di\-men\-sion\-al $\cat0$ cube complex~$X$ with finite staircase
  length.  Let $l \in \N$, let $s \in
  X_{\h}^{(l)}$, and let $\zeta \in \HH^n_{\Gamma, b}(X; \R)$ be
  non-transverse to~$\Gamma s$. Then
  \[ [\delta^1 f_s] \cup \zeta = 0 \in \HH_{\Gamma,b}^{n+2}(X;\R).
  \]
\end{thm}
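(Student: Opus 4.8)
The plan is to exhibit an explicit bounded primitive for the cocycle $\delta^1 f_s \cup \zeta$ in the equivariant bounded cochain complex $\CC^{*}_{\Gamma,b}(X;\R)$. Fix a representative $\kappa \in \CC^n_{\Gamma,b}(X;\R)$ of $\zeta$ that is non-transverse to $\Gamma s$; we may assume $\Gamma s \neq \Gamma \bar s$, as otherwise $f_s = 0$ and the statement is trivial. Since $\delta^1 f_s$ is the relevant degree-$2$ cocycle, the cup product $\delta^1 f_s \cup \kappa$ lives in $\CC^{n+2}_{\Gamma,b}(X;\R)$, and the standard Leibniz identity $\delta^{n+2}(f_s \cup \kappa) = \delta^1 f_s \cup \kappa + (-1)^1 f_s \cup \delta^n \kappa = \delta^1 f_s \cup \kappa - f_s \cup \delta^n\kappa$ shows that, since $\kappa$ is a cocycle, $\delta^1 f_s \cup \kappa = \delta^{n+2}(f_s \cup \kappa)$. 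Of course $f_s \cup \kappa$ itself need not be bounded, since $f_s$ is unbounded. The idea, exactly as in the work of Bucher--Monod, Heuer, and Amontova--Bucher, is to correct $f_s \cup \kappa$ by a $\Gamma$-invariant cochain $\Phi$ so that $f_s \cup \kappa - \Phi$ is bounded while $\delta^{n+2}\Phi = 0$ (or at least $\delta\Phi$ is bounded with an adjustment that still has bounded $\delta$). Concretely, I expect to define, using non-transversality, a modified counting function $\tilde\kappa$ built from $\kappa$ evaluated on heads/tails of the relevant translates of $s$, and then set $\Phi(x_0,\dots,x_{n+1}) := \sum_{t \in [x_0,x_1]_{\h}^{(l)}} \es(t)\cdot \kappa(\omega_t, x_1, \dots, x_{n+1})$ where $\omega_t$ is a tail of the translate $t$ — the point of non-transversality being precisely that this value does not depend on the choice of $\omega_t$, so $\Phi$ is well-defined and $\Gamma$-invariant.

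The first key step is to verify that the difference $\Psi := f_s \cup \kappa - \Phi$ is bounded. Unwinding definitions, $(f_s \cup \kappa)(x_0,\dots,x_{n+1}) = f_s(x_0,x_1)\cdot \kappa(x_1,\dots,x_{n+1})$, while $\Phi(x_0,\dots,x_{n+1}) = \bigl(\sum_{t \in [x_0,x_1]_{\h}^{(l)}} \es(t)\kappa(\omega_t,x_1,\dots,x_{n+1})\bigr)$. Writing $f_s(x_0,x_1) = \sum_{t} \es(t)$ and using that $\kappa$ is $\Gamma$-invariant and bounded, the difference telescopes into a sum over segments $t$ of terms $\es(t)(\kappa(x_1,\dots,x_{n+1}) - \kappa(\omega_t,x_1,\dots,x_{n+1})) = \es(t)\,\delta^n\kappa(\omega_t, x_1,\dots,x_{n+1})\cdot(\pm 1) = 0$ up to boundary terms; but since $\kappa$ is a cocycle this isn't quite a bound by itself — rather, I expect the correct comparison is between $\Phi$ evaluated with tails of $t$ and a companion cochain evaluated with heads of $t$, and the difference $\kappa(\omega_t,\dots) - \kappa(\alpha_t,\dots)$ over a single segment is controlled because $\omega_t$ and $\alpha_t$ differ by a bounded amount in terms of crossed hyperplanes, so cocycle-ness of $\kappa$ bounds the difference. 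The number of segments $t$ contributing a nonzero correction at a simplex is the main quantitative issue, and this is exactly where Lemma~\ref{lem:fd:segments:fix} enters: after intersecting intervals at a median vertex, only segments containing that vertex in their interior survive, and there are at most $(l-1)\sigma d^l$ of them. This gives $\|\Psi\|_\infty \leq C(l,\sigma,d)\cdot \|\kappa\|_\infty$.

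The second key step is to check that $\delta^{n+2}\Phi$ is bounded, which combined with $\delta^{n+2}(f_s\cup\kappa) = \delta^1 f_s \cup \kappa$ and boundedness of $\delta^1 f_s$ (Proposition~\ref{prop:fs:qm}) gives that $\delta^{n+2}\Psi$ is bounded; then $\delta^1 f_s \cup \kappa = \delta^{n+2}(f_s \cup \kappa) = \delta^{n+2}\Phi + \delta^{n+2}\Psi$, and since $\Psi$ is a bounded cochain, $[\delta^1 f_s \cup \kappa] = [\delta^{n+2}\Phi]$ in $\HH^{n+2}_{\Gamma,b}(X;\R)$. So it remains to show $\delta^{n+2}\Phi = 0$, or that $\Phi$ is cohomologous to something with vanishing class. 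I would compute $\delta^{n+2}\Phi$ directly: the first face map removes $x_0$, and $\Phi(x_1,\dots,x_{n+1})$ is interpreted with the interval $[x_1,x_1]_{\h}^{(l)} = \emptyset$, hence that term vanishes; the remaining face maps act only on the $\kappa$-factor, producing $\sum_t \es(t)\,\delta^n\kappa(\omega_t, x_1,\dots) = 0$ by cocycle-ness of $\kappa$, except that the interval $[x_0,x_1]_{\h}$ also changes when we delete $x_1$ — but we delete $x_1$ only in the face that drops $x_0$ as well, so in fact after careful bookkeeping $\delta^{n+2}\Phi$ is a sum of terms each of which vanishes. I expect the genuinely delicate bookkeeping is in this coboundary computation, matching up how the segment-sum index set and the simplicial faces interact; this, together with confirming the well-definedness and $\Gamma$-invariance of $\Phi$ via non-transversality, is the main obstacle. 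Once $\delta^{n+2}\Phi = 0$ is established, we conclude $[\delta^1 f_s]\cup\zeta = [\delta^{n+2}(f_s\cup\kappa)] = [\delta^{n+2}\Psi] = 0$ since $\Psi$ is bounded, completing the proof.
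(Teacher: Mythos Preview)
Your high-level strategy is the right one and matches the paper: correct the unbounded cochain $f_s \cup \kappa$ by a $\Gamma$-invariant cochain built from $\kappa$ evaluated at heads/tails of translates of $s$ (which is well-defined precisely by non-transversality), and then use the median-splitting bound of Lemma~\ref{lem:fd:segments:fix} to control the error. However, the specific implementation you sketch has two genuine gaps.

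First, your claim that $\delta\Phi = 0$ rests on a miscomputation of the simplicial coboundary. With $\Phi(x_0,\dots,x_{n+1}) = \sum_{t\in [x_0,x_1]_{\h}^{(l)}} \es(t)\,\kappa(\omega_t,x_1,\dots,x_{n+1})$, the face that drops $x_0$ gives $\Phi(x_1,x_2,\dots,x_{n+2})$, which is a sum over $[x_1,x_2]_{\h}^{(l)}$, \emph{not} over $[x_1,x_1]_{\h}^{(l)}=\emptyset$; and the face that drops $x_1$ gives a sum over $[x_0,x_2]_{\h}^{(l)}$. These two terms involve genuinely different index sets and do not cancel, nor do they combine with the remaining faces to produce $\sum_t \es(t)\,\delta^n\kappa(\omega_t,\dots)$. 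So $\delta\Phi$ is not zero, and in fact there is no reason for it to be bounded either.

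Second, even if you repair the coboundary issue by passing to a degree-$n$ cochain $\eta$ and setting the correction to be $\delta^n\eta$ (so that it is automatically closed, which is what the paper does), your choice of using only the tail $\omega_t$ will still fail. The paper defines
\[
\tilde\kappa(t,x_1,\dots,x_n)=\tfrac{1}{2}\,\es(t)\bigl(\kappa(\alpha,x_1,\dots,x_n)+\kappa(\omega,x_1,\dots,x_n)\bigr),
\]
symmetrising over head \emph{and} tail, and then $\eta(x_0,\dots,x_n)=\sum_{t\in[x_0,x_1]_{\h}^{(l)}}\tilde\kappa(t,x_1,\dots,x_n)$. The point of the symmetrisation is the identity $\tilde\kappa(\bar t,\cdot)=-\tilde\kappa(t,\cdot)$, which is exactly what makes the two sums $\sum_{t\in[m,x_1]}$ and $\sum_{t\in[x_1,m]}$ cancel after the median splitting. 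With your tail-only version, $\es(\bar t)\,\kappa(\omega_{\bar t},\cdot)=-\es(t)\,\kappa(\alpha_t,\cdot)\neq -\es(t)\,\kappa(\omega_t,\cdot)$ in general, so that cancellation breaks down and the resulting cochain is not bounded. In short: work one degree lower, symmetrise over heads and tails, and then show directly that $\beta=f_s\cup\kappa+\delta^n\eta$ is bounded via the cocycle identity for $\kappa$ and Lemma~\ref{lem:fd:segments:fix}.
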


\begin{proof}
  The statement is obvious if $f_s = 0$, so we may assume that
  $\Gamma s \neq \Gamma \bar{s}$ and use Equation~(\ref{formula:fs})
  from Remark~\ref{rem:inversion}.
  Let $\kappa$ be a cocycle representing $\zeta$ that
  is non-transverse to~$\Gamma s$.
  As in the proofs of the corresponding results for free
  or surface groups~\cite{Heuer:cup, AB:cup, surface:cup},
  we provide an explicit construction of a 
  bounded cochain~$\beta \in \CC_{\Gamma,b}^{n+1}(X;\R)$ with
  \[ \delta^{n+1} \beta = (\delta^1 f_s) \cup \kappa. 
  \]
  More specifically, it suffices to find a cochain~$\eta \in
  \CC_{\Gamma,b}^{n}(X;\R)$ such that
  \[ \beta \coloneqq f_s \cup \kappa + \delta^n \eta
  \]
  is bounded.
  Indeed:
  \[
  \delta^{n+1}(f_s \cup \kappa + \delta^n \eta)
  = \delta^1(f_s) \cup \kappa - f_s \cup \delta^n(\kappa) + \delta^{n+1}(\delta^n \eta)
  = \delta^1 f_s \cup \kappa.
  \]

  \emph{Construction of~$\eta$.}
  For $t\in X_{\h}^{(l)}$ and $x_1, \ldots, x_n \in V$, we define
  \begin{align*}
    \tilde{\kappa} (t,x_1, \ldots, x_n) \coloneqq \frac{1}{2} \cdot \es(t) \cdot
    \bigl(\kappa(\alpha, x_1, \ldots, x_n) + \kappa(\omega, x_1, \ldots, x_{n})\bigr), 
  \end{align*}
  where $\alpha \in \alpha(t)$ and $\omega \in \omega(t)$.  This is
  well-defined, i.e., independent of the choices of $\alpha$
  and~$\omega$: If $t \in \Gamma s \cup \Gamma \bar{s}$, then we apply
  that $\kappa$ and $t$ are non-transverse (recall that $\kappa$ and
  $\Gamma \bar{s}$ are also non-transverse).  Otherwise, by definition
  $\es(t) = 0$ and so the whole term is~$0$.

  By construction, $\tilde{\kappa} (t,x_1, \ldots, x_n) = 0$ if
  $t\not\in \Gamma s \cup \Gamma \bar{s}$; moreover, $\|
  \tilde{\kappa}\|_\infty \leq \| \kappa \|_\infty$, and
  $\tilde{\kappa}$ satisfies the identity
  \begin{equation}
    \label{eq_symmetry_tilde_kappa_ccc}
    \tilde{\kappa} (t,x_1, \ldots, x_n) = - \tilde{\kappa} (\bar{t},x_1, \ldots, x_n).
  \end{equation}
  We now define $\eta \colon V^{n+1} \to \R$ via
  \begin{align*}
    \eta (x_0,x_1, \ldots, x_{n}) \coloneqq
    & \sum_{t\in [x_0,x_1]_{\h}^{(l)}}\tilde{\kappa}(t,x_1, \ldots, x_n).
  \end{align*}

  \emph{Boundedness of $\beta$.}
  Let $x_0, \dots, x_{n+1} \in V$. By definition, we have 
  \begin{align*}
  \beta(x_0, \ldots, x_{n+1})  
        &= f_s(x_0,x_1) \cdot \kappa(x_1, \ldots, x_{n+1}) \\
        &+ \eta(x_1, \ldots, x_{n+1}) - \eta(x_0,x_2, \ldots, x_{n+1}) \\
        &+ \sum_{j = 2}^{n+1} (-1)^{j}\cdot \eta(x_0,x_1, \ldots, \hat{x}_j, \ldots, x_{n+1}).
  \end{align*}
  Entering the definitions of the counting map~$f_s$
  (Remark~\ref{rem:inversion}) and~$\eta$, we get
  \begin{align*}
  \beta(x_0, \ldots, x_{n+1})  
	&= \sum_{t\in [x_0,x_1]_{\h}^{(l)}} \es(t) \cdot \kappa(x_1, \ldots,  x_{n+1})\\
	&+ \sum_{t\in [x_1,x_2]_{\h}^{(l)}} \tilde{\kappa}(t, x_2, \ldots, x_{n+1}) - \sum_{t\in [x_0,x_2]_{\h}^{(l)}} \tilde{\kappa}(t, x_2, \ldots, x_{n+1})\\
	&+ \sum_{j = 2}^{n+1} (-1)^{j} \sum_{t\in [x_0,x_1]_{\h}^{(l)}} \tilde{\kappa}(t, x_1, \ldots, \hat{x}_j, \ldots, x_{n+1}).
  \end{align*}
  The cochain~$\kappa$ is a cocycle. Therefore, for all~$y \in V$, we
  have
  \begin{align*}
0 	&= \delta^n \kappa(y, x_1, \ldots, x_{n+1}) \\
	&= \kappa(x_1, \ldots, x_{n+1}) - \kappa(y, x_2, \ldots, x_{n+1}) \\
	&+ \sum_{j = 2}^{n+1} (-1)^{j} \cdot \kappa(y, x_1, \ldots, \hat{x}_j, \ldots, x_{n+1}).
  \end{align*}
  Using this once for~$\alpha \in \alpha(t)$ and once for~$\omega \in
  \omega(t)$, we see that 
  \begin{alignat*}{2}
    & 2\cdot\sum_{t\in [x_0,x_1]_{\h}^{(l)}} \es(t) \cdot \kappa(x_1, \ldots,  x_{n+1})
    \\ 
    & =  \sum_{t\in [x_0,x_1]_{\h}^{(l)}} \es(t) 
    \cdot \biggl( \kappa(\alpha, x_2, \ldots, x_{n+1})
    - \sum_{j = 2}^{n+1} (-1)^{j} \kappa(\alpha, x_1, \ldots, \hat{x}_j, \ldots, x_{n+1})
    \\
    & 
    \phantom{= \sum_{t\in [x_0,x_1]_{\h}^{(l)}}}
    + \kappa(\omega, x_2, \ldots, x_{n+1})
    - \sum_{j = 2}^{n+1} (-1)^{j} \kappa(\omega, x_1, \ldots, \hat{x}_j, \ldots, x_{n+1})
    \biggr)
    \\
    & = 2 \cdot \sum_{t\in [x_0,x_1]_{\h}^{(l)}}
    \biggl( \tilde{\kappa}(t,x_2, \ldots, x_{n+1})
    -\sum_{j = 2}^{n+1} (-1)^{j} \tilde{\kappa}(t,x_1, \ldots, \hat{x}_j, \ldots, x_{n+1})
    \biggr).
  \end{alignat*}
  This allows us to simplify the above to
  \begin{align*}
    \beta(x_0, \ldots, x_{n+1})   
	&= \sum_{t\in [x_0,x_1]_{\h}^{(l)}} \tilde{\kappa}(t, x_2, \ldots, x_{n+1})\\
	&+ \sum_{t\in [x_1,x_2]_{\h}^{(l)}}  \tilde{\kappa}(t, x_2, \ldots, x_{n+1}) - \sum_{t\in [x_0,x_2]_{\h}^{(l)}}  \tilde{\kappa}(t, x_2, \ldots, x_{n+1}).
  \end{align*}


  Now let $\median$ denote the median of $x_0,x_1,x_2$.  For all
  $i\not = j$, the union $[x_i,m]_{\h}^{(l)}\cup [m,x_j]_{\h}^{(l)}$
  is contained in $[x_i,x_j]_{\h}^{(l)}$ and the only segments in
  $[x_i,x_j]_{\h}^{(l)}$ that are not contained in this union are
  those that contain $m$ in their interior.  There are at most $(l-1) \sigma d^l$
  of those, by Lemma~\ref{lem:fd:segments:fix},
  where $d = \dim(X)$ and $\sigma$ is the staircase length of~$X$.
  Hence, we can rewrite
  \begin{align*}
    \sum_{t\in [x_i,x_j]_{\h}^{(l)}} \tilde{\kappa}(t, x_2, \ldots, x_{n+1}) &=  \sum_{t\in [x_i,m]_{\h}^{(l)}} \tilde{\kappa}(t, x_2, \ldots, x_{n+1})\\
	&+ \sum_{t\in [m,x_j]_{\h}^{(l)}} \tilde{\kappa}(t, x_2, \ldots, x_{n+1}) \\
	&+ \sum_{t\in [x_i,x_j]_{\h}^{(l)} \setminus [x_i,m]_{\h}^{(l)}\cup [m,x_j]_{\h}^{(l)}} \tilde{\kappa}(t, x_2, \ldots, x_{n+1})
  \end{align*}
  and the last sum is bounded by~$(l-1) \sigma d^l
  \cdot \| \tilde{\kappa} \|_\infty \leq (l-1) \sigma d^l \cdot \| \kappa \|_\infty$.

  It follows that up to a uniformly (in~$x_0,\dots,x_{n+1}$)
  bounded error, we have
  \begin{align*}
    \beta(x_0, \ldots, x_{n+1})  
	&= \sum_{t\in [x_0,m]_{\h}^{(l)}} \tilde{\kappa}(t, x_2, \ldots, x_{n+1}) + \sum_{t\in [m,x_1]_{\h}^{(l)}} \tilde{\kappa}(t, x_2, \ldots, x_{n+1})\\
	&+ \sum_{t\in [x_1,m]_{\h}^{(l)}} \tilde{\kappa}(t, x_2, \ldots, x_{n+1}) + \sum_{t\in [m,x_2]_{\h}^{(l)}} \tilde{\kappa}(t, x_2, \ldots, x_{n+1}) \\
	&- \sum_{t\in [x_0,m]_{\h}^{(l)}} \tilde{\kappa}(t, x_2, \ldots, x_{n+1}) - \sum_{t\in [m,x_2]_{\h}^{(l)}} \tilde{\kappa}(t, x_2, \ldots, x_{n+1}).
  \end{align*}
  This is $0$ because by \cref{eq_symmetry_tilde_kappa_ccc}, we have
  \begin{equation*}
    \sum_{t\in [m,x_1]_{\h}^{(l)}} \tilde{\kappa}(t, x_2, \ldots, x_{n+1}) =  - \sum_{t\in [x_1,m]_{\h}^{(l)}} \tilde{\kappa}(t, x_2, \ldots, x_{n+1}).
  \end{equation*}
  In fact, this is the only place in the argument that uses
  the symmetry of \cref{eq_symmetry_tilde_kappa_ccc}.
\end{proof}

By applying the orbit map (Subsection \ref{ss:equivariant}), we obtain a vanishing result in bounded cohomology of groups:

\begin{cor}\label{cor:mainccc:groups}
  Let $\Gamma \actson X$ be an action of a group~$\Gamma$ on a
  finite-dimensional $\cat0$ cube complex~$X$ with finite staircase
  length.  Let $l \in \N$, let $s \in
  X_{\h}^{(l)}$, let $x \in V$ and let $\zeta \in \HH^n_{\Gamma, b}(X; \R)$ be
  non-transverse to~$\Gamma s$. Then
  \[ [\delta^1 \widehat{f_{s, x}}] \cup o_x^n(\zeta) = 0 \in \HH_{b}^{n+2}(\Gamma;\R).
  \]
\end{cor}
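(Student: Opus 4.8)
The plan is to deduce this directly from Theorem~\ref{thm:mainccc} by pushing the vanishing statement from equivariant bounded cohomology of~$X$ down to bounded cohomology of~$\Gamma$ via the orbit map~$o_x$. All the required functoriality has already been set up in Subsection~\ref{ss:equivariant}: the orbit map~$o_x \colon \Gamma \to V$, $\gamma \mapsto \gamma \cdot x$ induces a cochain map, hence a map $o_x^* \colon \HH^*_{\Gamma,b}(X;\R) \to \HH^*_b(\Gamma;\R)$, and this map is compatible with cup products.

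First I would apply Theorem~\ref{thm:mainccc} to the data $\Gamma \actson X$, the segment~$s \in X_{\h}^{(l)}$, and the class $\zeta \in \HH^n_{\Gamma,b}(X;\R)$, which is assumed non-transverse to~$\Gamma s$; this yields $[\delta^1 f_s] \cup \zeta = 0$ in $\HH^{n+2}_{\Gamma,b}(X;\R)$. Next I would apply the ring homomorphism~$o_x^*$ in degree~$n+2$ and use compatibility with cup products to get
\[
  o_x^{n+2}\bigl([\delta^1 f_s] \cup \zeta\bigr)
  = o_x^2\bigl([\delta^1 f_s]\bigr) \cup o_x^n(\zeta)
  = 0 \in \HH^{n+2}_b(\Gamma;\R).
\]
Finally I would identify $o_x^2([\delta^1 f_s])$ with $[\delta^1 \widehat{f_{s,x}}]$: by Remark~\ref{rem:qmtogroups}, the orbit map sends the equivariant quasimorphism~$f_s$ (in the sense of Definition~\ref{defi:qmaction}) to the quasimorphism $f_{s,x} \colon \gamma \mapsto f_s(x, \gamma \cdot x)$ of~$\Gamma$, and the induced classes satisfy $[\delta^1 \widehat{f_{s,x}}] = o_x^*([\delta^1 f_s])$. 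Substituting this gives exactly $[\delta^1 \widehat{f_{s,x}}] \cup o_x^n(\zeta) = 0$, as claimed.

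Since Theorem~\ref{thm:mainccc} does all the real work, this is essentially a formal consequence and there is no genuine obstacle; the only point that requires a line of care is matching up conventions — namely that $\widehat{f_{s,x}}$ is the $\Gamma$-invariant $1$-cochain of Section~\ref{ss:qm} associated with the group quasimorphism $f_{s,x}$, which is precisely the content of Remark~\ref{rem:qmtogroups}, so that the pullback $o_x^*[\delta^1 f_s]$ and the class $[\delta^1 \widehat{f_{s,x}}]$ literally agree in $\HH^2_b(\Gamma;\R)$ rather than merely up to the (trivial) conjugation action. One might also remark that under the stronger hypothesis of amenable vertex stabilisers, $o_x^*$ is an isomorphism by Theorem~\ref{thm:amenable:stab}, so in that situation the corollary covers all of $\HH^*_b(\Gamma;\R)$ that can be described via the action, but this is not needed for the statement as phrased.
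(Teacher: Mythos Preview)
Your proposal is correct and follows exactly the same approach as the paper: the paper's proof is the single line ``This follows directly from Theorem~\ref{thm:mainccc} (see Remark~\ref{rem:qmtogroups})'', and you have simply unpacked the implicit use of the orbit map's compatibility with cup products together with the identification $o_x^*([\delta^1 f_s]) = [\delta^1 \widehat{f_{s,x}}]$ from Remark~\ref{rem:qmtogroups}.
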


\begin{proof}
This follows directly from Theorem \ref{thm:mainccc} (see Remark \ref{rem:qmtogroups}).
\end{proof}

\subsection{Cup products of two median classes}
\label{ss:cupccc2}

As a special case, we consider the cup product of two median classes
$[\delta^1 f_r]$ and~$[\delta^1 f_s]$.

The non-transversality hypothesis in Theorem~\ref{thm:mainccc}
can be easily checked in this case and is closely related to the notion of (non-)transversality for halfspaces from \cref{def:transversality_halfspaces}.

\begin{thm}
  \label{thm:cuptwomedian}
  Let $\Gamma \actson X$ be an action of a group~$\Gamma$ on a
  finite-dimensional $\cat0$ cube complex~$X$ with finite staircase length.
  Let $s = ( h_1 \supset \cdots \supset h_l )$ and $r = ( k_1 \supset
  \cdots \supset k_{p} )$ be $\h$-segments in~$X$.  Suppose
  that each of the four pairs $\Gamma h_1, \Gamma k_1$; $\Gamma h_1,
  \Gamma k_{p}$; $\Gamma h_l, \Gamma k_1$; $\Gamma h_l, \Gamma
  k_{p}$ is non-transverse.  Then $\delta^1 f_r$ and~$\Gamma s$
  are non-transverse. In particular, by Theorem~\ref{thm:mainccc}:
  $$[\delta^1 f_s] \cup [\delta^1 f_r] = 0 \in \HH^4_{\Gamma, b}(X; \R).$$
\end{thm}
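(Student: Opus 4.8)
The plan is to establish the non-transversality assertion; the vanishing of the cup product is then immediate from Theorem~\ref{thm:mainccc} applied with $\zeta = [\delta^1 f_r] \in \HH^2_{\Gamma,b}(X;\R)$, a class represented by the bounded cocycle $\delta^1 f_r$ (bounded by Proposition~\ref{prop:fs:qm}). If $\Gamma r = \Gamma \bar r$ then $f_r \equiv 0$ and there is nothing to prove, so I would assume $\Gamma r \neq \Gamma \bar r$ and work throughout with the formula $f_r(v,x) = \sum_{\tau \in [v,x]_{\h}^{(p)}} \er(\tau)$ of Remark~\ref{rem:inversion}, recalling that $\er$ is supported on $\Gamma r \cup \Gamma \bar r$. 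Since $\delta^1 f_r$ is $\Gamma$-invariant and the four orbit hypotheses are themselves $\Gamma$-invariant, it suffices to prove that $\delta^1 f_r$ is non-transverse to the single segment $s$. Because $\delta^1 f_r(v,x_1,x_2) = f_r(x_1,x_2) - f_r(v,x_2) + f_r(v,x_1)$, this reduces to the following claim: for every fixed $x \in V$, the value $f_r(v,x)$ is the same for all heads $v \in \alpha(s)$, and likewise the same for all tails $v \in \omega(s)$.

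The first genuine step is a carrier argument. Let $\alpha, \alpha' \in \alpha(s)$. Both lie in $\overline{h_1}$, and each is an endpoint of an edge dual to the hyperplane $H_1 = \{h_1,\overline{h_1}\}$, so both lie in the carrier $\mathcal{N}(H_1)$. As $\mathcal{N}(H_1)$ is convex, every combinatorial geodesic from $\alpha$ to $\alpha'$ lies in it, and by Lemma~\ref{lem:dist:halfspaces} it crosses exactly the hyperplanes separating $\alpha$ and $\alpha'$; since $\alpha$ and $\alpha'$ lie on the same side of $H_1$, Lemma~\ref{lem:carrier} shows that every such hyperplane is transverse to $H_1$. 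Running the identical argument with $h_l$ and $H_l := \{h_l,\overline{h_l}\}$ shows that every hyperplane separating two tails $\omega,\omega' \in \omega(s)$ is transverse to $H_l$.

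Next I would combine this with the elementary observation that an $\h$-segment $\tau = (t_1 \supset \cdots \supset t_p)$ belongs to $[v,x]_{\h}^{(p)}$ if and only if $x \in t_p$ and $v \in \overline{t_1}$ -- so that, with $x$ fixed, membership depends on $v$ only through the first halfspace $t_1$. Hence any segment $\tau$ lying in exactly one of $[\alpha,x]_{\h}^{(p)}$ and $[\alpha',x]_{\h}^{(p)}$ has $x \in t_p$ and its first hyperplane $\{t_1,\overline{t_1}\}$ separating $\alpha$ from $\alpha'$, hence transverse to $H_1$ by the previous step. If in addition $\er(\tau) \neq 0$, then $\tau \in \Gamma r \cup \Gamma \bar r$, so its first hyperplane lies in $\Gamma K_1 \cup \Gamma K_p$ (the reverse $\bar r$ starts with $\overline{k_p}$); but the non-transversality of the pairs $\Gamma h_1,\Gamma k_1$ and $\Gamma h_1,\Gamma k_p$ forbids any member of $\Gamma K_1 \cup \Gamma K_p$ from being transverse to $H_1 \in \Gamma H_1$ -- a contradiction. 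Therefore $\er$ vanishes on the symmetric difference of $[\alpha,x]_{\h}^{(p)}$ and $[\alpha',x]_{\h}^{(p)}$, and since $\er$ is supported on $\Gamma r \cup \Gamma \bar r$ the sums of $\er$ over these two intervals agree, i.e. $f_r(\alpha,x) = f_r(\alpha',x)$. The tail case is verbatim the same, now using that $\Gamma h_l,\Gamma k_1$ and $\Gamma h_l,\Gamma k_p$ are non-transverse. This proves that $\delta^1 f_r$ is non-transverse to $s$, hence to $\Gamma s$, and Theorem~\ref{thm:mainccc} then gives $[\delta^1 f_s] \cup [\delta^1 f_r] = 0$.

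I expect the only real content to be the two steps above: the carrier argument pinning down that hyperplanes separating heads (resp.\ tails) of $s$ are transverse to $H_1$ (resp.\ $H_l$), and the observation that $[v,x]_{\h}^{(p)}$-membership sees $v$ only through the segment's first halfspace -- which is precisely what makes the hypothesis on the four extremal pairs $\Gamma h_1,\Gamma k_1$; $\Gamma h_1,\Gamma k_p$; $\Gamma h_l,\Gamma k_1$; $\Gamma h_l,\Gamma k_p$ exactly the right one. The rest is bookkeeping with the orbit-invariant sign $\er$. It is worth noting that finite-dimensionality and finite staircase length are not used in this reduction at all; they enter only so that $f_s$ and $f_r$ are quasimorphisms and Theorem~\ref{thm:mainccc} is available.
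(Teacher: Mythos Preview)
Your proof is correct and follows essentially the same route as the paper's: reduce to showing $f_r(\alpha,x)$ is constant over $\alpha \in \alpha(s)$, use Lemma~\ref{lem:carrier} to see that any hyperplane separating two heads is transverse to $H_1$, and then invoke the orbit non-transversality hypotheses on $\Gamma h_1,\Gamma k_1$ and $\Gamma h_1,\Gamma k_p$ to rule out contributions from $\Gamma r \cup \Gamma \bar r$. The only cosmetic difference is that the paper introduces the median $m = m(\alpha,\alpha',x)$ and shows $[\alpha,x]_{\h}^{(p)} \cap (\Gamma r \cup \Gamma \bar r) = [m,x]_{\h}^{(p)} \cap (\Gamma r \cup \Gamma \bar r)$ (and likewise for $\alpha'$), whereas you compare $[\alpha,x]_{\h}^{(p)}$ and $[\alpha',x]_{\h}^{(p)}$ directly via their symmetric difference; your observation that membership of $\tau$ in $[v,x]_{\h}^{(p)}$ depends on $v$ only through $t_1$ is exactly what makes the paper's median step work, so the two arguments are really the same.
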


\begin{proof}
  Again we may assume that $\Gamma r \neq \Gamma \overline{r}$ and use
  Equation (\ref{formula:fs}) from Remark \ref{rem:inversion}.  We
  show that $\delta^1 f_r(\alpha, x_1, x_2)$ is independent of $\alpha
  \in \alpha(t)$ whenever $t \in \Gamma s$, under the assumption that
  the two pairs $\Gamma h_1, \Gamma k_1$ and $\Gamma h_1, \Gamma
  k_{p}$ are non-transverse (the proof for $\omega \in \omega(t)$ is
  analogous, using that the two remaining pairs are non-transverse).
  For this, it is enough to show that $f_r(\alpha, x)$ is independent
  of $\alpha \in \alpha(t)$ whenever $t \in \Gamma s$.  Since
  $\alpha(\gamma s) = \gamma \alpha(s)$ 
  is $\Gamma$-invariant, it suffices to show the statement for~$t =
  s$.

  So let $\alpha \in \alpha(s)$. By definition, there exists an edge
  $e$ dual to $h_1$ that has $\alpha$ as one of its endpoints and
  $\alpha \in \overline{h_1}$. Let $\alpha' \in \alpha(s)$ and let
  $e'$ be the corresponding edge. Let $m \coloneqq m(\alpha, \alpha',
  x)$. Thus, $m \in \overline{h_1}$, since $m \in [\alpha, \alpha']$
  and $\overline{h_1}$ is convex. The notation is illustrated in
  Figure~\ref{fig:cuptwomedian}.
  
  Note that $\alpha$ and $\alpha'$ belong to $\mathcal{N}(\{ h_1, \overline{h_1} \}) \cap \overline{h_1}$,
  so Lemma \ref{lem:carrier} implies that $[\alpha, \alpha']_{\h}$ intersects each of the orbits
  $\Gamma k_1$, $\Gamma \overline{k_1}$, $\Gamma k_{p}$, and
  $\Gamma \overline{k_{p}}$ only trivially.
  We obtain that the same also holds for~$[\alpha, m]_{\h} \subset
  [\alpha, \alpha']_{\h}$. This implies that every occurrence of
  $\gamma r$ or~$\gamma \overline{r}$ in~$[\alpha, x]_{\h}$
  cannot intersect~$[\alpha, m]_{\h}$, and thus $[\alpha,
    x]_{\h}^{(p)} \cap \Gamma r = [m, x]_{\h}^{(p)} \cap \Gamma
  r$.  By symmetry, the analogous statements hold upon switching
  $\alpha$ and~$\alpha'$. Thus:
  \[f_r(\alpha, x)
  = \sum\limits_{t \in [\alpha, x]_{\h}^{(p)}} \er(t)
  = \sum\limits_{t \in [m, x]_{\h}^{(p)}} \er(t)
  = \sum\limits_{t \in [\alpha', x]_{\h}^{(p)}} \er(t)
  = f_r(\alpha', x).
  \qedhere
  \]
\end{proof}

\begin{figure}
  \begin{center}
    \begin{tikzpicture}[x=1cm,y=1cm,thick]
      \begin{scope}[\geodesiccolor]
        \draw (1,-1) -- (1,1);
        \vertx{(1,0)}
        \draw (1.3,-0.3) node {$m$};
        \draw (1,0) -- (3,0);
        \vertx{(3,0)}
        \draw (3.3,0) node {$x$};
      \end{scope}
      \hedge{(0,1)}{(1,1)}
      \hedge{(0,-1)}{(1,-1)}
      \draw[\hedgecolor] (0.5,1.3) node {$e$};
      \draw (-0.5,1) node {$\omega(e)$};
      \draw (1.3,1) node {$\alpha$};
      \draw[\hedgecolor] (0.5,-1.3) node {$e'$};
      \draw (-0.5,-1) node {$\omega(e')$};
      \draw (1.3,-1) node {$\alpha'$};
      \halfspaces{(0.5,0)}{90}{h_1}
    \end{tikzpicture}
  \end{center}

  \caption{The situation in the proof of \cref{thm:cuptwomedian}.}
  \label{fig:cuptwomedian}
\end{figure}
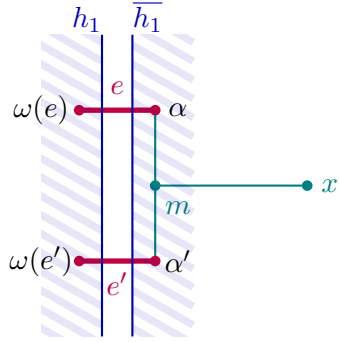

\section{Trees}
\label{s:trees}

We specialise the results of Section~\ref{s:ccc} to the case of
actions on trees.  Fix a tree~$T = (V,E)$ and an action of a
group~$\Gamma$ on~$T$.  Seeing~$T$ as a~$\cat0$ cube complex,
halfspaces in~$T$ correspond to oriented edges, and hyperplanes to
unoriented edges.  Therefore~$\h$-segments and oriented geodesic
segments are in one-to-one correspondence, with the same value of
length.  The definition of the median quasimorphism then takes the
following more familiar form:

\begin{defi}[median quasimorphism, tree case]
  In the situation above, let $s$~be an oriented geodesic segment
  in~$T$.  We define the \emph{median quasimorphism~$f_s \colon V
    \times V \to \R$ for~$s$} for all~$(x,y) \in V \times V$ to be the
  number of translates of~$s$ in~$[x,y]$ minus the number of
  translates of~$s$ in~$[y,x]$.
\end{defi}

Once again, if $\Gamma s = \Gamma \bar{s}$ then $f_s = 0$, and
otherwise it can be computed by a formula as in
Remark~\ref{rem:inversion}~\cite[Section~3.1]{IPS}.

If $x \in V$, the quasimorphism~$f_{s,x}$ on~$\Gamma$ associated with
the quasimorphism~$f_s$ of~$\Gamma \actson T$ via the orbit map of the
base vertex~$x$ is the median quasimorphism as considered by Monod and
Shalom~\cite{Monodshalom}.  For many interesting actions, such median
quasimorphisms produce uncountably many linearly independent
quasimorphisms~\cite{IPS}\footnote{One should note that it is not entirely
clear whether conditions~(i) and~(iii) of~\cite[Theorem~1]{IPS}
actually exclude each other as the argument for this part~\cite[Section~2.2]{IPS}
is incomplete~\cite[p.~74]{fhofmann_msc}}.

\begin{example}[Brooks quasimorphisms as median quasimorphisms]
  Let $F$ be a non-abelian free group with a fixed basis, let $T$ be
  the corresponding Cayley tree.  A reduced word~$w \in F$ defines a
  segment~$s$ in~$T$ of length~$|w|$.  Then $f_{s, 1}$, the pullback
  of the median quasimorphism~$f_s$ under the orbit map at~$1$,
  coincides with the (big) Brooks quasimorphism~$H_w$ on~$F$.
\end{example}

Moreover, given a segment~$s = [x, y]$, its head and tail (Definition
\ref{defi:headstails}) are uniquely defined as~$x = \alpha(s)$, $y =
\omega(s)$.  In particular, for every segment~$s$ and every~$\kappa
\in \CC^n_{\Gamma, b}(T, \mathbb{R})$, we trivially have that $\kappa$
and~$s$ are non-transverse. Moreover, trees are finite-dimensional
and have finite staircase length. So Theorem \ref{thm:mainccc} holds
without any additional assumptions:

\begin{thm}\label{thm:maintree}
  Let $\Gamma \actson T$ be an action of a group~$\Gamma$ on a
  tree~$T$.  Let $s$ be an oriented geodesic segment in~$T$, and let
  $f_s$ be the corresponding median quasimorphism for~$\Gamma \actson
  T$. Then, for all~$n \geq 1$ and all~$\zeta \in
  \HH_{\Gamma,b}^n(T;\R)$, we have
  \[ [\delta^1 f_s] \cup \zeta = 0 \in \HH_{\Gamma,b}^{n+2}(T;\R).
  \]
\end{thm}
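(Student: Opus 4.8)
The plan is to obtain this as an immediate consequence of Theorem~\ref{thm:mainccc}, by verifying that, in the tree case, \emph{all} of its hypotheses are automatically met. First I would record the two finiteness inputs: regarded as a $\cat0$ cube complex, a simplicial tree~$T$ is one-dimensional, and (as noted after the definition of staircase length) every tree has staircase length at most~$1$. Hence the finite-dimensionality and finite-staircase-length assumptions hold, so Proposition~\ref{prop:fs:qm} already applies and tells us that $f_s$ is a bona fide quasimorphism of~$\Gamma \actson T$; in particular the median class $[\delta^1 f_s] \in \HH_{\Gamma,b}^2(T;\R)$ is defined, and the cup product in the statement makes sense.

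The only substantive point to check is the non-transversality hypothesis of Theorem~\ref{thm:mainccc}. Under the identification of $\h$-segments with oriented geodesic segments, a segment $s = [x,y]$ has a single head and a single tail, namely $\alpha(s) = \{x\}$ and $\omega(s) = \{y\}$: the edge of~$T$ dual to the first halfspace with an endpoint in~$\overline{h_1}$ is the initial edge of~$s$, and similarly for the terminal edge. Consequently the two conditions in Definition~\ref{defi:nontransverse} --- that $\kappa(\alpha,x_1,\dots,x_n)$ be independent of $\alpha \in \alpha(s)$ and that $\kappa(\omega,x_1,\dots,x_n)$ be independent of $\omega \in \omega(s)$ --- are vacuous, since each of $\alpha(s)$ and $\omega(s)$ is a one-element set. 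Thus every cochain $\kappa \in \CC_{\Gamma,b}^n(T;\R)$, and hence every class $\zeta \in \HH_{\Gamma,b}^n(T;\R)$, is non-transverse to the orbit $\Gamma s$.

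Having verified both hypotheses, I would simply invoke Theorem~\ref{thm:mainccc} to conclude that $[\delta^1 f_s] \cup \zeta = 0$ in $\HH_{\Gamma,b}^{n+2}(T;\R)$, as desired. I do not anticipate any real obstacle: the whole argument lives in Theorem~\ref{thm:mainccc}, and the tree case is exactly the degenerate situation in which its two running assumptions --- finiteness and non-transversality --- cost nothing to check. (One could alternatively note that the case $f_s = 0$, i.e.\ $\Gamma s = \Gamma \overline{s}$, is trivial, but this is already subsumed in the proof of Theorem~\ref{thm:mainccc}.)
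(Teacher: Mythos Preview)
Your proposal is correct and follows essentially the same approach as the paper: the paper also observes that trees are finite-dimensional with finite staircase length, that heads and tails of segments in a tree are unique so that non-transversality is vacuous, and then invokes Theorem~\ref{thm:mainccc} directly.
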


Similarly, Corollary \ref{cor:mainccc:groups} implies:

\begin{cor}\label{cor:maintreegroup}
  Let $\Gamma \actson T$ be an action of a group~$\Gamma$ on a tree~$T$.
  Let $x$ be a vertex, let $s$ be an oriented geodesic segment, and
  let $f_{s, x}$ be the corresponding median quasimorphism of~$\Gamma$.
  Then for every~$n \geq 1$ and every 
  class~$\zeta \in o_x^n(\HH^n_{\Gamma, b}(T; \R)) \subset
  \HH^n_b(\Gamma; \R)$, we have
  \[[\delta^1 f_{s, x}] \cup \zeta = 0.\]
  In particular, this holds in the following cases:
  \begin{enumerate}
  \item When $\zeta = [\delta^1 \widehat{f_{r, w}}]$ for some other
    segment~$r$ and vertex~$w$ in~$T$;
  \item For every~$\zeta\in \HH^n_b(\Gamma; \R)$ if all vertex
    stabilizers are amenable.
  \end{enumerate}
\end{cor}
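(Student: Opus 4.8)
The plan is to deduce the statement from Corollary~\ref{cor:mainccc:groups}, in the same way that Theorem~\ref{thm:maintree} is deduced from Theorem~\ref{thm:mainccc}. First I would record the two structural facts about trees that make the hypotheses of Corollary~\ref{cor:mainccc:groups} automatic: a tree, regarded as a $\cat0$ cube complex, is one-dimensional and has staircase length at most~$1$, so it is finite-dimensional with finite staircase length; and for every oriented geodesic segment~$s = [x',y']$ in~$T$ one has $\alpha(s) = \{x'\}$ and $\omega(s) = \{y'\}$, so every cochain $\kappa \in \CC^n_{\Gamma,b}(T;\R)$ --- and hence every class in $\HH^n_{\Gamma,b}(T;\R)$ --- is trivially non-transverse to the orbit~$\Gamma s$. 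With these observations in place, Corollary~\ref{cor:mainccc:groups} applied to $X = T$ yields $[\delta^1 f_{s,x}] \cup o_x^n(\xi) = 0$ for every $\xi \in \HH^n_{\Gamma,b}(T;\R)$, which is exactly the first assertion upon writing $\zeta = o_x^n(\xi)$.

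Next I would treat the two special cases. For~(2): if all vertex stabilizers are amenable, then Theorem~\ref{thm:amenable:stab} shows that the orbit map $o_x^n \colon \HH^n_{\Gamma,b}(T;\R) \to \HH^n_b(\Gamma;\R)$ is an isomorphism, in particular surjective, so \emph{every} $\zeta \in \HH^n_b(\Gamma;\R)$ lies in $o_x^n(\HH^n_{\Gamma,b}(T;\R))$ and the first assertion applies verbatim. For~(1): here $n = 2$ and, by Remark~\ref{rem:qmtogroups}, we have $[\delta^1 \widehat{f_{r,w}}] = o_w^2([\delta^1 f_r])$, so this class lies in the image of the orbit map at~$w$. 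To see that it also lies in the image of~$o_x^2$, I would use that the pullback $o_v^* \colon \HH^*_{\Gamma,b}(T;\R) \to \HH^*_b(\Gamma;\R)$ is independent of the vertex~$v$: the orbit maps $o_v, o_{v'} \colon \Gamma \to V$ are $\Gamma$-equivariant, and the usual prism homotopy between the induced cochain maps $\CC^*_{\Gamma,b}(T;\R) \to \CC^*_b(\Gamma;\R)$ shows that $o_v^* = o_{v'}^*$ (cf.\ \cite{Loehsauer, Li}). Thus $[\delta^1 \widehat{f_{r,w}}] = o_x^2([\delta^1 f_r]) \in o_x^2(\HH^2_{\Gamma,b}(T;\R))$, and the first assertion applies once more.

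The only step that is not a mechanical specialisation of the $\cat0$ cube complex results is the vertex-independence of the orbit-map pullback invoked in case~(1); this is where I would be careful to write out, or cite, the prism homotopy. Everything else follows by observing that trees sit inside the theory as the degenerate one-dimensional case in which the non-transversality condition is vacuous. As an alternative route to~(1), one may apply the ring homomorphism $o_x^*$ directly to the identity $[\delta^1 f_s] \cup [\delta^1 f_r] = 0$ furnished by Theorem~\ref{thm:maintree}, but this still requires identifying $[\delta^1 \widehat{f_{r,w}}]$ with $o_x^2([\delta^1 f_r])$, i.e.\ the same vertex-independence.
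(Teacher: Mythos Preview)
Your proof is correct and follows the paper's approach. The paper's own proof is a single sentence: it observes that only case~(2) requires anything specific to trees, namely Theorem~\ref{thm:amenable:stab}, and leaves the main statement and case~(1) as immediate consequences of Corollary~\ref{cor:mainccc:groups}. You have unpacked exactly this, and in doing so you have been more careful than the paper about case~(1): the paper does not spell out why $[\delta^1 \widehat{f_{r,w}}]$ lies in the image of~$o_x^*$ when $w \neq x$, whereas you correctly identify this as the one non-mechanical point and handle it via the basepoint-independence of the orbit-map pullback (the explicit prism homotopy $h(\phi)(\gamma_0,\dots,\gamma_{n-1}) = \sum_i (-1)^i \phi(\gamma_0 x,\dots,\gamma_i x,\gamma_i w,\dots,\gamma_{n-1} w)$ is bounded and $\Gamma$-equivariant, so indeed $o_x^* = o_w^*$).
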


\begin{proof}
  The last statement is the only one that is specific to trees,
  and it follows by applying Theorem~\ref{thm:amenable:stab}.
\end{proof}

\begin{rem}
\label{rem:gog}
Let $\Gamma$ be a group acting on a tree~$T$ with amenable vertex
stabilizers.  Then all edge stabilizers are also amenable.  Therefore up
to taking a subdivision of~$T$, it follows that $\Gamma$ also admits
an action on a tree \emph{without inversions} with amenable vertex
stabilizers.  In other words, $\Gamma$ is the fundamental group of a
graph of groups with amenable vertex groups.
\end{rem}

\section{Right-angled Artin groups}\label{s:raag}

We derive vanishing results for cup products of median classes of
right-angled Artin groups, considering the action on the universal
covering of their Salvetti complexes. Moreover, we give examples of
median classes of right-angled Artin groups that are non-trivial in
bounded cohomology.

\subsection{RAAGs and the Salvetti complex}

We first recall basic definitions and refer the reader to
the literature~\cite{Cha:introductionrightangled}
for more details.

If $G$ is a finite unoriented simplicial graph, the corresponding
\emph{right-angled Artin group (RAAG)}~$\Gamma \coloneqq A(G)$ is the group
with presentation
\[ \bigl\langle V(G) \mid [v, w] = 1 : \{v, w\} \in E(G)
   \bigr\rangle.
\]
The corresponding presentation complex is a square complex, and gluing
higher-dimensional cubes for each clique in~$G$ provides a compact
non-positively curved cube complex, called the \emph{Salvetti
  complex}~$S(G)$.  The universal covering~$\tilde{S}(G)$ of~$S(G)$ is
a finite-dimensional $\cat0$ cube complex, on which $\Gamma$ acts
simplicially, freely, and cocompactly.  We will denote it simply
by~$X$, and let $V$ be its vertex set. Because the action is free,
the orbit maps induce isomorphisms~$\HH^*_{\Gamma, b}(X; \R) \cong
\HH^*_b(\Gamma; \R)$ (Theorem~\ref{thm:amenable:stab}).
Moreover, $S(G)$ is a model of~$K(\Gamma,1)$ (we will not use this
fact directly). More importantly, let us recall that $S(G)$ satisfies the relevant finiteness conditions:

\begin{rem}\label{rem:salvettistaircase}
  Salvetti complexes of finite unoriented simplicial graphs are
  finite-dimensional and have
  finite staircase length~\cite[Lemma~4.17]{elia}.
\end{rem}

Given an edge~$e$ of~$X$, the projection of~$e$ to~$S(G)$ is one of
the edges of the original presentation complex, and so has a
label~$\lambda(e) \in V(G)$.  This label is preserved by the action
of~$\Gamma$. Moreover, if $e$ is dual to a halfspace~$h$, then
$\lambda(h) \coloneqq \lambda(e)$ is well-defined (this can be seen
from an alternative description
of~$\tilde{S}(G)$~\cite[Section~3.6]{Cha:introductionrightangled}).

In the following, we will always consider this setup:

\begin{setup}\label{setup:raag}
  Let $\Gamma$ be the RAAG associated with a finite unoriented
  simplicial graph~$G$ and let $X \coloneqq\tilde S(G)$ be the
  universal covering of the Salvetti complex.
\end{setup}

The following lemma collects several properties given in Setup~\ref{setup:raag}.

\begin{lemma}
\label{lem:basic_properties_RAAGs}
  In the situation of Setup~\ref{setup:raag}, we have:
  \begin{enumerate}
  \item \label{lem:transverse_Salvetti}
  	Let $h$, $k$ be halfspaces in~$X$. Then the following are equivalent:
  	\begin{enumerate}
  	\item There exists~$\gamma \in \Gamma$ with~$h \pitchfork \gamma
    		k$.
  	\item The vertices $\lambda(h)$ and~$\lambda(k)$ of~$G$ are distinct
    	and connected by an edge.
  	\end{enumerate}
  \item  \label{lem:tight_implies_non_commuting}
    Let $h,k$ be halfspaces of~$X$ such that $h\supset k$ is tightly nested. Then $\ls \lambda(h), \lambda(k) \rs$ is independent in~$G$.
	
	\item   \label{lem:normal_form_RAAGs}
  	Let $\gamma\in \Gamma$ be represented by a reduced word~$w$ in~$V(G)^\pm$ such that every two consecutive letters of~$w$ are either equal or do not commute with each other. Then for all~$x\in X(0)$, there is a unique geodesic in~$X$ from~$x$ to~$\gamma x$ and the sequence of labels on the edges occurring in this geodesic is given by~$w$.
  \end{enumerate}
 
\end{lemma}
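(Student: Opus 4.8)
The plan is to prove the three claims in the order \ref{lem:normal_form_RAAGs}, \ref{lem:transverse_Salvetti}, \ref{lem:tight_implies_non_commuting}: the normal-form statement is the combinatorial backbone, and the other two are then read off from the cell structure of $X = \tilde S(G)$. For \ref{lem:normal_form_RAAGs} I would identify $X(1)$ with the Cayley graph of $\Gamma$ with respect to $V(G)$ via the orbit map at $x$ (the ``alternative description'' of $\tilde S(G)$, cf.\ \cite[Section~3.6]{Cha:introductionrightangled}). A word $w = s_1\cdots s_n$ over $V(G)^\pm$ then traces the edge path $x = x_0, x_1, \dots, x_n = \gamma x$ with $x_i = s_1\cdots s_i \cdot x$, whose $i$-th edge carries the label $\lambda(s_i)$. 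The hypothesis on $w$ says precisely that no two consecutive letters are mutually inverse and that no two consecutive letters are distinct but commuting; by the standard theory of geodesic words in right-angled Artin groups --- where the only relations among geodesic representatives of an element are shuffles of adjacent commuting generators, and here no shuffle is available --- $w$ is the unique geodesic word representing $\gamma$, so the traced path is the unique geodesic from $x$ to $\gamma x$, with label sequence $w$.

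For \ref{lem:transverse_Salvetti}, the implication (a)$\Rightarrow$(b) is immediate: by Lemma~\ref{lem:quadrangle_from_transversality} a transversality $h \pitchfork \gamma k$ is witnessed by a square $Q$ in $X$, whose image in $S(G)$ is a $2$-cell; the $2$-cells of $S(G)$ are indexed by the edges of $G$, and the two labels appearing on $Q$ are $\lambda(h)$ and $\lambda(\gamma k) = \lambda(k)$ (using $\Gamma$-invariance of $\lambda$), so $\{\lambda(h),\lambda(k)\} \in E(G)$ and in particular $\lambda(h) \neq \lambda(k)$. For (b)$\Rightarrow$(a), the edge $\{\lambda(h),\lambda(k)\}$ of $G$ yields a $2$-cell of $S(G)$ that lifts to a square $Q_0$ in $X$ exhibiting $H_0 \pitchfork K_0$ for hyperplanes with $\lambda(H_0) = \lambda(h)$ and $\lambda(K_0) = \lambda(k)$. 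Since $X \to S(G)$ is a regular covering with deck group $\Gamma$, the group $\Gamma$ acts transitively on the lifts of each cell, hence on the edges of any fixed label, hence on the hyperplanes of any fixed label (a hyperplane being determined by its dual edges); so $H_0$ and $K_0$ are $\Gamma$-translates of $H$ and $K$, and translating $Q_0$ suitably --- and using that $\pitchfork$ is insensitive to passing to complements --- produces $\gamma$ with $h \pitchfork \gamma k$.

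For \ref{lem:tight_implies_non_commuting}, let $h \supsetneq k$ be tightly nested with hyperplanes $H$ and $K$. The crux is that the carriers $\mathcal{N}(H)$ and $\mathcal{N}(K)$ intersect. One can see this from Lemma~\ref{lem:carrier}: as $h \supset k$ are nested they are not transverse, so no geodesic contained in $\mathcal{N}(H)$ may cross $K$; since $\mathcal{N}(H)$ is convex and meets both sides of $H$, hence meets $\bar k$, this forces $\mathcal{N}(H) \subseteq \bar k$, and symmetrically $\mathcal{N}(K) \subseteq h$, after which a short argument using that no halfspace lies strictly between $h$ and $k$ shows that $\mathcal{N}(H)$ and $\mathcal{N}(K)$ share a vertex $v$. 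At $v$ there is an edge $e$ dual to $H$ and an edge $f$ dual to $K$, and they are not joined in $\mathrm{lk}(v)$: otherwise the link condition would yield a square exhibiting $H \pitchfork K$, contradicting nestedness. Since $\mathrm{lk}(v)$ is isomorphic to the link of the vertex of the Salvetti complex --- the flag complex on $V(G)^\pm$ in which $u^\epsilon$ and $w^\delta$ are adjacent exactly when $u \neq w$ and $\{u,w\} \in E(G)$ --- and $e$, $f$ correspond to vertices $\lambda(h)^\epsilon$, $\lambda(k)^\delta$, their non-adjacency means $\lambda(h) = \lambda(k)$ or $\{\lambda(h),\lambda(k)\} \notin E(G)$, i.e.\ $\{\lambda(h),\lambda(k)\}$ is independent in $G$. (Alternatively, once the carriers are known to meet they contain a length-$2$ geodesic crossing $H$ and then $K$, and part~\ref{lem:normal_form_RAAGs} applies directly, since its hypothesis on consecutive letters is exactly ``equal or non-commuting''.)

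I expect the main obstacle to be establishing that the carriers of a tightly nested pair intersect --- equivalently, that such a pair is realised by a length-$2$ geodesic path --- since that is where the purely order-theoretic notion ``tightly nested'' must be turned into geometry; the remaining parts are bookkeeping with the cells of the Salvetti complex and with standard RAAG normal-form theory.
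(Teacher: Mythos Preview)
For parts~\ref{lem:transverse_Salvetti} and~\ref{lem:normal_form_RAAGs} your argument is essentially the paper's: both reduce~\ref{lem:normal_form_RAAGs} to standard RAAG normal-form theory (the paper simply cites \cite[Section~2.3]{Cha:introductionrightangled}), and both argue~\ref{lem:transverse_Salvetti} via the square/$2$-cell correspondence together with $\Gamma$-transitivity on edges of a fixed label.

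For part~\ref{lem:tight_implies_non_commuting} you take a genuinely different route. The paper does not argue geometrically at all: it cites \cite[Lemma~7.3, cf.\ Definition~7.1(2)]{FFT} for the fact that a tightly nested pair $h\supset k$ admits no $\gamma\in\Gamma$ with $h\pitchfork\gamma k$ (the printed ``$\gamma h$'' is evidently a slip), and then reads off independence from part~\ref{lem:transverse_Salvetti} in the contrapositive. Your carrier/link argument is more self-contained and in fact works in an arbitrary $\cat0$ cube complex, not only in $\tilde S(G)$. The step you correctly flag as the obstacle --- that $\mathcal N(H)\cap\mathcal N(K)\neq\emptyset$ for a tightly nested pair --- is true and can be completed as follows: if the two carriers were disjoint convex subcomplexes, a hyperplane $L=\{\ell,\bar\ell\}$ would separate them; since $\mathcal N(H)$ meets both sides of $H$ but lies on one side of $L$ (and symmetrically for $K$), $L$ is nested with both $H$ and $K$, and the four nesting possibilities force $k\subsetneq\ell\subsetneq h$, contradicting tightness. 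So your sketch is sound, but the ``short argument'' is a real case check rather than a one-liner; the paper's approach is shorter precisely because it outsources this content to~\cite{FFT}, while yours trades the citation for a direct proof.
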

\begin{proof}
To prove \cref{lem:transverse_Salvetti}, we first assume that $\lambda(h)$ and $\lambda(k)$ are distinct and connected by
  an edge.
  Then there is square in~$X$ whose boundary consists of two edges
  with label~$\lambda(h)$ and two edges with label~$\lambda(k)$. Let
  $e_h$ and $e_k$ be such edges with~$\lambda(e_h) = \lambda(h)$
  and $\lambda(e_k) = \lambda(k)$, respectively. By construction,
  the hyperplanes dual to these edges are transverse to each other.
  As $X$ is the universal covering of~$S(G)$, there is exactly one
  $\Gamma$-orbit of edges for each vertex in~$v\in V(G)$; such an orbit
  consists of all edges~$e$ in~$X$ with label~$\lambda(e)=v$. This
  implies that there are~$\gamma_1, \gamma_2 \in \Gamma$ such that~$\ls
  \gamma_1 h, \overline{\gamma_1 h} \rs$ is the hyperplane dual to the
  edge~$e_h$ and such that $\ls \gamma_2 k, \overline{\gamma_2 k}\rs $
  is the hyperplane dual to the edge~$e_k$. In particular, we then
  have $\gamma_1 h \pitchfork \gamma_2 k$, so $h \pitchfork
  \gamma_1^{-1}\gamma_2 k$.

  To show the reverse implication of \cref{lem:transverse_Salvetti}, let $h$, $k$ be
  halfspaces in~$X$ and $\gamma\in \Gamma$ such that $h \pitchfork
  \gamma k$. By \cref{lem:quadrangle_from_transversality}, there is a quadrangle~$e_{H}$, $f_{H}$,
  $e_{\gamma K}$, $f_{\gamma K}$ 
	such that $e_{H}$, $f_{H}$ are dual to the
    hyperplane~$H\coloneqq \ls h, \overline{h} \rs$ and such that
    $e_{\gamma K}$, $f_{\gamma K}$ are dual to the hyperplane~$\gamma K\coloneqq \ls \gamma k,
    \gamma\overline{k} \rs$.  As $X$ is
  $\cat0$, this quadrangle forms the boundary of a 2-cube
  in~$X$~\cite[Lecture 2]{Sageev2014}.  This implies that $\lambda(H)$
  and $\lambda(\gamma K) = \lambda(K)$ form a clique in~$G$, i.e.,
  these vertices are connected by an edge.

Regarding \cref{lem:tight_implies_non_commuting},
  Fern{\'o}s--Forrester--Tao show that if $h\supset k$ is tightly
  nested, then there is no~$\gamma\in \Gamma$ such that $h\pitchfork
  \gamma h$ \cite[Lemma 7.3, cf.~Definition 7.1.(2)]{FFT}. By
  \cref{lem:transverse_Salvetti}, this implies that $\ls \lambda(h),
  \lambda(k) \rs$ is an independent set in~$G$ (consisting of one or
  two vertices).

Item~\ref{lem:normal_form_RAAGs} follows from canonical forms of words in
 RAAGs~\cite[Section~2.3]{Cha:introductionrightangled}.
\end{proof}

\subsection{Vanishing cup products on RAAGs}

We apply Theorem~\ref{thm:mainccc} and Theorem~\ref{thm:cuptwomedian}
to RAAGs.

\begin{cor}
  \label{cor:RAAGs}
  In the situation of Setup~\ref{setup:raag}, let $l \in \N$ and let
  $s = ( h_1 \supset \cdots \supset h_l ) \in X_{\h}^{(l)}$.  If
  $\lambda(h_1)$ and $\lambda(h_l)$ are isolated vertices of~$G$, then
  every~$\kappa \in \CC^n_{\Gamma, b}(V; \R)$ is non-transverse
  to~$\Gamma s$. In particular, for every~$\zeta \in \HH^n_{\Gamma,
    b}(X; \R)$, we have
  \[ [\delta^1 f_s] \cup \zeta
   = 0 \in \HH^{n+2}_{\Gamma, b}(X; \R).
  \]
\end{cor}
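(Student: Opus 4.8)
The plan is to verify, for \emph{every} cochain $\kappa \in \CC^n_{\Gamma,b}(V;\R)$, the non-transversality hypothesis of Theorem~\ref{thm:mainccc} with respect to $\Gamma s$; the vanishing of the cup product then follows at once from Theorem~\ref{thm:mainccc}, since $X$ is finite-dimensional and has finite staircase length by Remark~\ref{rem:salvettistaircase}. Concretely, I would show that the isolation hypothesis forces the head set $\alpha(s)$ and the tail set $\omega(s)$ to be singletons; once this is known, the values $\kappa(\alpha, x_1, \dots, x_n)$ and $\kappa(\omega, x_1, \dots, x_n)$ are trivially constant over $\alpha \in \alpha(s)$ and $\omega \in \omega(s)$, and the same holds for every translate $\gamma s$ because $\alpha(\gamma s) = \gamma\alpha(s)$ and $\omega(\gamma s) = \gamma\omega(s)$ are again singletons, so $\kappa$ is non-transverse to the whole orbit $\Gamma s$.

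The first step is a transversality observation: if $\lambda(h_1)$ is an isolated vertex of $G$, then no halfspace of $X$ is transverse to $h_1$. Indeed, given any halfspace $k$, the labels $\lambda(h_1)$ and $\lambda(k)$ are never both distinct and joined by an edge of $G$ (either $\lambda(k)=\lambda(h_1)$, or $\lambda(k)\neq\lambda(h_1)$ but then they are non-adjacent since $\lambda(h_1)$ is isolated), so by the first part of Lemma~\ref{lem:basic_properties_RAAGs} there is no $\gamma\in\Gamma$ with $h_1 \pitchfork \gamma k$; in particular $h_1 \not\pitchfork k$.

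The second, and really the only non-formal, step is to deduce that $\mathcal{N}(H_1)\cap\overline{h_1}$ is a single vertex, where $H_1 = \{h_1,\overline{h_1}\}$. Since the carrier $\mathcal{N}(H_1)$ is convex, any two of its vertices lying in $\overline{h_1}$ are joined by a geodesic contained in $\mathcal{N}(H_1)$; by the second part of Lemma~\ref{lem:carrier} every hyperplane crossed by such a geodesic is transverse to $H_1$, but by the previous step there are no such hyperplanes, so the geodesic crosses nothing and the two vertices coincide. As any edge dual to $h_1$ crosses $H_1$ and hence lies in $\mathcal{N}(H_1)$, it has an endpoint in $\mathcal{N}(H_1)\cap\overline{h_1}$, so this set is nonempty, hence a singleton $\{\alpha_0\}$. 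By Definition~\ref{defi:headstails} every head of $s$ is an endpoint in $\overline{h_1}$ of an edge dual to $h_1$, so $\alpha(s) = \{\alpha_0\}$. Running the same argument with $h_l$ (also isolated) shows $\omega(s)$ is a singleton, which completes the verification of non-transversality; applying Theorem~\ref{thm:mainccc} to any cocycle representing $\zeta$ then gives $[\delta^1 f_s]\cup\zeta = 0$.
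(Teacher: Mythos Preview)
Your proof is correct and follows the same overall strategy as the paper: show that the head set~$\alpha(s)$ and tail set~$\omega(s)$ are singletons, conclude that every cochain is automatically non-transverse to~$\Gamma s$, and then invoke Theorem~\ref{thm:mainccc}. The only difference is in how you establish the singleton property. The paper argues directly from the structure of the Salvetti complex: since~$\lambda(h_1)$ is isolated, no edge with this label lies in a square (or higher cube) of~$S(G)$, hence the same holds in~$X$, and so the hyperplane~$\{h_1,\overline{h_1}\}$ has a unique dual edge. You instead combine Lemma~\ref{lem:basic_properties_RAAGs}\ref{lem:transverse_Salvetti} (isolation forces~$h_1$ to be non-transverse to every halfspace) with Lemma~\ref{lem:carrier} (so the carrier intersected with~$\overline{h_1}$ is a single point). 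Both arguments are short and valid; yours has the minor advantage of reusing machinery already in the paper rather than appealing to an additional geometric fact about~$\tilde S(G)$, while the paper's is slightly more self-contained in that it does not need the carrier lemma.
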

\begin{proof}
  The finiteness conditions on~$X$ are satisfied
  (Remark~\ref{rem:salvettistaircase}).
  
  As in the case of trees, non-transversality is automatic if heads
  and tails of segments are unique.  The highest dimension of a cube
  containing an edge~$e$ of~$X$ equals the highest cardinality of a
  clique in~$G$ containing~$\lambda(e)$.  In particular, if
  $\lambda(e)$ is isolated in~$G$, then by definition, no edge
  in~$S(G)$ with label~$\lambda(e)$ is contained in a
  higher-dimensional cube. Hence the same is true for all such edges
  in the universal covering~$X$, which implies that each of the two
  halfspaces dual to~$e$ has a unique head and a unique
  tail. Moreover, this property is preserved by the action
  of~$\Gamma$. This proves the non-transversality claim.

  We can then apply Theorem~\ref{thm:mainccc} to obtain the vanishing
  result.
\end{proof}

Applying Theorem~\ref{thm:cuptwomedian} to the setting of
RAAGs, we obtain:

\begin{cor}
  \label{cor:RAAG:twomedian}
  In the situation of Setup~\ref{setup:raag}, let $l \in \N$,  
  let $s = ( h_1 \supset \cdots \supset h_l ) \in X_{\h}^{(l)}$, and $r
  = ( k_1 \supset \cdots \supset k_{p} ) \in X_{\h}^{(p)}$.
  Suppose that each of the four pairs
  $\lambda(h_1), \lambda(k_1)$; $ \lambda(h_1), \lambda(k_{p})$; $
  \lambda(h_l), \lambda(k_1) $; $ \lambda(h_l), \lambda(k_{p})$ are
  \emph{not} connected by an edge in~$G$.  Then
  \[ [\delta^1 f_s] \cup [\delta^1 f_r]
   = 0 \in \HH^4_{\Gamma, b}(X; \R).
  \]
\end{cor}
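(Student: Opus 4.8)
The plan is to deduce this as an immediate consequence of Theorem~\ref{thm:cuptwomedian}, the only work being to translate the combinatorial hypothesis on the labels $\lambda(h_1),\lambda(h_l),\lambda(k_1),\lambda(k_p)$ into the orbit-transversality hypothesis of that theorem. First I would record that the ambient complex $X=\tilde S(G)$ satisfies the standing finiteness assumptions: it is finite-dimensional and has finite staircase length by Remark~\ref{rem:salvettistaircase}. In particular $f_s$ and $f_r$ are genuine quasimorphisms of $\Gamma\actson X$ (Proposition~\ref{prop:fs:qm}), so the median classes $[\delta^1 f_s],[\delta^1 f_r]\in\HH^2_{\Gamma,b}(X;\R)$ make sense, and the hypotheses of Theorem~\ref{thm:cuptwomedian} on $X$ are met.

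The key step is the observation that, for halfspaces $h$ and $k$ of $X$, the pair of orbits $\Gamma h,\Gamma k$ is non-transverse --- i.e., there is no $\gamma\in\Gamma$ with $h\pitchfork\gamma k$ --- if and only if $\lambda(h)$ and $\lambda(k)$ are \emph{not} joined by an edge of $G$. This is exactly Lemma~\ref{lem:basic_properties_RAAGs}\eqref{lem:transverse_Salvetti}: that lemma asserts that $h\pitchfork\gamma k$ for some $\gamma\in\Gamma$ precisely when $\lambda(h)$ and $\lambda(k)$ are distinct and adjacent in $G$; since $G$ is a simplicial graph, an edge always joins two distinct vertices, so the negation of "distinct and adjacent" is simply "not joined by an edge" (which in particular subsumes the case $\lambda(h)=\lambda(k)$). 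Applying this equivalence to each of the four pairs $(h_1,k_1)$, $(h_1,k_p)$, $(h_l,k_1)$, $(h_l,k_p)$, the hypothesis of the corollary yields exactly that each of the four orbit pairs $\Gamma h_1,\Gamma k_1$; $\Gamma h_1,\Gamma k_p$; $\Gamma h_l,\Gamma k_1$; $\Gamma h_l,\Gamma k_p$ is non-transverse in the sense of Theorem~\ref{thm:cuptwomedian}. Here one should note that "$\Gamma h,\Gamma k$ non-transverse" means $\gamma h\not\pitchfork\gamma' k$ for all $\gamma,\gamma'\in\Gamma$, which is equivalent to $h\not\pitchfork\gamma k$ for all $\gamma$ because translating by a group element is a bijection of $V$ and hence preserves (non-)emptiness of the four defining intersections.

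With these verifications in hand, Theorem~\ref{thm:cuptwomedian} applies directly and gives $[\delta^1 f_s]\cup[\delta^1 f_r]=0$ in $\HH^4_{\Gamma,b}(X;\R)$, which is the claim. I do not expect a real obstacle here beyond being careful about the dictionary between the two phrasings of non-transversality; the single point worth a sentence of justification is that, in a simplicial graph, "not connected by an edge" is genuinely the negation of "distinct and connected by an edge", so that the label condition of the corollary matches the orbit condition of Theorem~\ref{thm:cuptwomedian} on the nose.
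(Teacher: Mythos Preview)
Your proposal is correct and follows essentially the same route as the paper's proof: verify the finiteness conditions on~$X$ via Remark~\ref{rem:salvettistaircase}, use Lemma~\ref{lem:basic_properties_RAAGs}\eqref{lem:transverse_Salvetti} to convert the label hypothesis into the orbit non-transversality hypothesis, and then invoke Theorem~\ref{thm:cuptwomedian}. Your write-up is simply more explicit about the dictionary between the two phrasings of non-transversality, which is harmless.
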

\begin{proof}
  The finiteness conditions on~$X$ are satisfied
  (Remark~\ref{rem:salvettistaircase}). 
  The characterisation of \cref{lem:transverse_Salvetti} of \cref{lem:basic_properties_RAAGs} shows
  that in this situation the non-transversality hypothesis of
  Theorem~\ref{thm:cuptwomedian} is satisfied. Applying
  Theorem~\ref{thm:cuptwomedian} to the action of~$\Gamma$ on~$X$ proves the claim.
\end{proof}

Again, let us remark that although the definition of
non-transversality is more natural at the level of the action, the
orbit map gives a canonical isomorphism $\HH^n_{\Gamma, b}(X; \R)
\cong \HH^n_b(\Gamma; \R)$, and so Corollaries \ref{cor:RAAGs}
and~\ref{cor:RAAG:twomedian} are actually statements about the usual
bounded cohomology of RAAGs.

\subsection{Non-trivial median classes in RAAGs}\label{ss:nontrivialmedian}

Finally, we show that there is a wealth of median classes
on RAAGs that are non-trivial in bounded cohomology.

Every halfspace in~$X$ induces an orientation on its dual edges.
Now every \emph{oriented} edge in~$X$
determines an element in
\begin{equation*}
  V(G)^\pm = \bigl\{ v  \bigm| v \in V(G) \bigr\}
  \cup \bigl\{ v^{-1} \bigm| v \in V(G) \bigr\}.
\end{equation*}
Hence, an $\h$-segment~$s\in X_{\h}^{(l)}$ determines a
sequence~$\signedlabels(s)$ of such elements. It is easy to see that
this sequence is reduced, so contains no subsequence of the form~$v
v^{-1}$ or~$v^{-1}v$. Hence, $\signedlabels(s)$ is a reduced word
representing an element of~$\Gamma$.  Similarly, every geodesic in $X$
defines a reduced word in~$V(G)^\pm$. However, one should be
aware that $\lambda^\pm(x)$ does not necessarily determine
the orbit~$\Gamma s$ completely.

To show non-triviality of certain median classes, we use free
subgroups~$\Lambda$ of~$\Gamma$.  Let us recall that a set of vertices
of a graph is \emph{independent} if the graph does not contain an edge
between any two of the vertices in this set.  Whenever $F\subset
V(G)$ is independent, it spans a free subgroup~$\Lambda \coloneqq \langle F
\rangle$ of~$\Gamma$. (These are often called \emph{parabolic} or
\emph{graphical} free subgroups.)

There are two canonical ways of relating the bounded cohomology
of~$\Gamma$ with the one of~$\Lambda$: The group~$\Lambda$ is
a retract of~$\Gamma$ and so~$\HH^*_b(\Lambda;\R)$ is a retract
of~$\HH^*_b(\Gamma;\R)$). More explicitly: 
On the one hand, the
inclusion~$i\colon \Lambda \hookrightarrow \Gamma$ induces a
\emph{restriction map}
\begin{equation}
\label{eq:gamma_to_lambda}
   \HH_b^2(i;\R) \colon \HH_b^2(\Gamma;\R)\to \HH_b^2(\Lambda;\R).
\end{equation}
On the other
and, there is a canonical epimorphism~$p\colon\Gamma \to \Lambda$ that is
given by the identity on~$\Lambda \leq \Gamma$ and sends every other
generator~$v \in V(G) \setminus F$ to the identity. This induces a
\emph{pullback map}
\begin{equation}
  \label{eq:lambda_to_gamma}
  \HH_b^2(p;\R) \colon \HH_b^2(\Lambda;\R)\to \HH_b^2(\Gamma;\R).
\end{equation}
We first use the restriction map (\cref{eq:gamma_to_lambda}) to show
that there are large classes of non-trivial median classes on RAAGs to
which our results apply (\cref{prop:BFbig},
\cref{prop:fsxnontrivial}).  We later show that these classes are not
pullbacks of Brooks quasimorphisms
(\cref{prop:median_qms_are_no_brooks}).

Let $w\in \Lambda$ be a reduced word with respect to the
basis~$F$. Then as explained by Fern{\'o}s--Forrester--Tao~\cite[proof
  of Proposition~4.7]{FFT}, for every vertex~$x\in X(0)$, there exists
a sequence~$s(w)$ of tightly nested halfspaces in~$X$ that has $x \in
\alpha(s(w))$ as a head and such that $\signedlabels(s(w)) = w$.
Similarly to the non-overlapping case considered by
Fern{\'o}s--Forrester--Tao~\cite[proof of Proposition~4.7]{FFT}, one
can show that $f_{s(w),x}$ restricts to the Brooks quasimorphsim~$H_w$
on~$\Lambda$. This is in fact true for every segment $s$ that has $x$ as a head and such that $\signedlabels(s) = w$:

\begin{lemma}[{\cite[proof of Proposition~4.7]{FFT}}]
  \label{lem:restriction_is_brooks}
  In the situation of Setup~\ref{setup:raag}, let $F\subset V(G)$ be
  independent and $\Lambda \coloneqq \langle F \rangle$.  Let $x\in X(0)$ and
  $s$ an $\h$-segment that has $x$ as a head: $x \in
  \alpha(s)$. Then
  \begin{equation*}
    f_{s,x}|_{\Lambda} = H_{w},
  \end{equation*}
  where $w:= \lambda^\pm(s)$ and $H_{w}\colon\Lambda\to \R$ is the big Brooks quasimorphism
  associated to~$w$.
\end{lemma}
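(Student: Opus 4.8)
The plan is to evaluate $f_{s,x}(\gamma)=f_s(x,\gamma x)$ directly for each $\gamma\in\Lambda$, exploiting that inside $X=\tilde S(G)$ the geodesics realising elements of the free subgroup $\Lambda$ behave like geodesics in a tree. Write $w:=\lambda^\pm(s)$, a reduced word of length $l$ over $F^\pm$. Since signed labels of $\h$-segments are $\Gamma$-invariant and $\lambda^\pm(\overline s)=w^{-1}$, an inclusion $\overline s\in\Gamma s$ would force $w=w^{-1}$, impossible for a nonempty reduced word over a free basis; hence $\Gamma s\neq\Gamma\overline s$, the function $\es$ of \cref{rem:inversion} is well-defined, and $f_{s,x}(\gamma)=\sum_{t\in[x,\gamma x]_\h^{(l)}}\es(t)$. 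Now fix $\gamma\in\Lambda$ and let $\gamma=v_1\cdots v_k$ be its unique reduced word over $F^\pm$. Because $F$ is independent, distinct elements of $F$ are non-adjacent in $G$ and hence do not commute in $\Gamma$, so consecutive letters of $v_1\cdots v_k$ (and of $w$) are equal or non-commuting; thus \cref{lem:normal_form_RAAGs} of \cref{lem:basic_properties_RAAGs} applies and gives a unique geodesic $\sigma\colon x=y_0,y_1,\dots,y_k=\gamma x$ whose $j$-th edge carries the signed label $v_j$. Let $h_j$ be the halfspace dual to that edge with $y_j\in h_j$; then by \cref{lem:dist:halfspaces} we have $[x,\gamma x]_\h=\{h_1,\dots,h_k\}$, and $\lambda^\pm(h_j)=v_j$.

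The key step is that $h_1\supsetneq h_2\supsetneq\cdots\supsetneq h_k$ is a single chain with every consecutive pair tightly nested. For $a<b$ the vertices $y_{a-1}\in\overline{h_a}\cap\overline{h_b}$, $y_{b-1}\in h_a\cap\overline{h_b}$ and $y_k\in h_a\cap h_b$ show that the only possibilities are $h_a\supsetneq h_b$ or $h_a\pitchfork h_b$; but $\lambda(h_a),\lambda(h_b)\in F$ are equal or non-adjacent in $G$, so by \cref{lem:transverse_Salvetti} of \cref{lem:basic_properties_RAAGs} no $\Gamma$-translate of $h_b$ is transverse to $h_a$, whence $h_a\supsetneq h_b$. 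A halfspace strictly between $h_j$ and $h_{j+1}$ would separate $\gamma x$ from $x$, hence equal some $h_r$, forcing the absurdity $j<r<j+1$; so the consecutive pairs are tightly nested. Consequently $[x,\gamma x]_\h^{(l)}=\{\,t_i:=(h_i\supset h_{i+1}\supset\cdots\supset h_{i+l-1}) : 1\le i\le k-l+1\,\}$, since a tightly nested sub-chain of $\{h_1,\dots,h_k\}$ must occupy a block of consecutive indices, and $\lambda^\pm(t_i)=v_i v_{i+1}\cdots v_{i+l-1}$.

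Therefore $f_{s,x}(\gamma)=\sum_{i=1}^{k-l+1}\es(t_i)$, while on the other hand $H_w(\gamma)=C_w(\gamma)-C_{w^{-1}}(\gamma)=\#\{i:\lambda^\pm(t_i)=w\}-\#\{i:\lambda^\pm(t_i)=w^{-1}\}$, because a copy of $w$ (respectively $w^{-1}$) starting at position $i$ in the reduced word of $\gamma$ is precisely the statement $\lambda^\pm(t_i)=w$ (respectively $=w^{-1}$), and $w\neq w^{-1}$. Now by $\Gamma$-invariance of signed labels: if $\es(t_i)=1$ then $t_i\in\Gamma s$, so $\lambda^\pm(t_i)=w$; if $\es(t_i)=-1$ then $t_i\in\Gamma\overline s$, so $\lambda^\pm(t_i)=w^{-1}$; and if $\lambda^\pm(t_i)\notin\{w,w^{-1}\}$ then $\es(t_i)=0$. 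So the whole proof reduces to the two implications $\lambda^\pm(t_i)=w\Rightarrow t_i\in\Gamma s$ and $\lambda^\pm(t_i)=w^{-1}\Rightarrow t_i\in\Gamma\overline s$.

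For these I would invoke the structural input of \cite[proof of Proposition~4.7]{FFT}: a segment with a designated head $x$ unfolds into a geodesic starting at $x$ whose sequence of crossed halfspaces is exactly the segment. Applied to $s$ this produces a geodesic $\rho_s$ from $x$ crossing $g_1,\dots,g_l$ with label sequence $w$, whereas $t_i$ is visibly the crossed-halfspace segment of the sub-path of $\sigma$ from $y_{i-1}$ to $y_{i+l-1}$, a geodesic from $y_{i-1}$ with label sequence $\lambda^\pm(t_i)$. If $\lambda^\pm(t_i)=w$, choose $\delta\in\Gamma$ with $\delta x=y_{i-1}$ — possible because $\Gamma$ acts transitively on $V$, as $S(G)$ has a single vertex — so that $\delta\rho_s$ and the sub-path of $\sigma$ are two geodesics from $y_{i-1}$ with the same (admissible) label sequence, hence equal by uniqueness; comparing crossed halfspaces gives $\delta g_j=h_{i+j-1}$ for all $j$, so $t_i=\delta s\in\Gamma s$. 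The case $\lambda^\pm(t_i)=w^{-1}$ is symmetric: there $\lambda^\pm(\overline{t_i})=w$ and $\overline{t_i}$ is the crossed-halfspace segment of the sub-path of $\sigma$ from $y_{i+l-1}$ to $y_{i-1}$ (again a geodesic), so the same argument gives $\overline{t_i}\in\Gamma s$, i.e.\ $t_i\in\Gamma\overline s$. Summing over $i$ then yields $f_{s,x}(\gamma)=H_w(\gamma)$ for every $\gamma\in\Lambda$. I expect the main obstacle to be precisely this unfolding input from \cite{FFT} — that a head-marked segment realises exactly the halfspaces of a genuine geodesic — and, relatedly, keeping the left $\Gamma$-action compatible with the right-multiplication edge labels of $X$; everything else is the chain bookkeeping above together with the routine check that \emph{overlapping} counting produces the big Brooks quasimorphism $H_w$ rather than the small one $h_w$.
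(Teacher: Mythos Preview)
The paper does not give its own proof here; it states the lemma with a citation to \cite[proof of Proposition~4.7]{FFT}, having remarked just before that the non-overlapping analogue is proved there and that the overlapping case is ``similar''. Your write-up is therefore more detailed than anything the paper offers, and the overall scheme --- unique geodesic via item~\ref{lem:normal_form_RAAGs} of \cref{lem:basic_properties_RAAGs}, crossed halfspaces forming a single tightly nested chain, segments as consecutive blocks, then matching by signed labels --- is exactly how one unwinds the FFT argument.

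There is, however, a genuine gap in the step you yourself flag. Your ``unfolding'' input --- that the geodesic from a head~$x$ with label sequence $w=\lambda^\pm(s)$ crosses \emph{exactly} the halfspaces $g_1,\dots,g_l$ of~$s$ --- fails already in Salvetti complexes. Take $G$ with vertex set $\{a,b,c,d\}$ and edge set $\{\{a,b\},\{c,d\}\}$; let $g_1$ be the halfspace dual to $\{e,a\}$ containing~$a$ and $g_2$ the halfspace dual to $\{a,ac\}$ containing~$ac$, so $s=(g_1\supset g_2)$ has $\lambda^\pm(s)=(a,c)$. Then $b$ is a head of~$s$ (the edge $\{b,ab\}$ is parallel to $\{e,a\}$ via the square on $a,b$), but the length-$2$ path from~$b$ with labels $(a,c)$ is $b\to ab\to abc$, and $\{ab,abc\}$ is \emph{not} parallel to $\{a,ac\}$: a connecting square would force $[b,c]=1$, which does not hold in~$G$. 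So your $\rho_s$ crosses $(g_1,g_2')$ with $g_2'\neq g_2$, and the identification ``$\delta g_j=h_{i+j-1}$'' breaks down. What rescues the argument is that $(g_1,g_2')=b\cdot s\in\Gamma s$ nonetheless: in $\tilde S(G)$ any two $\h$-segments with the same signed-label word lie in the same $\Gamma$-orbit. That orbit statement --- rather than the literal unfolding --- is the content to extract from~\cite{FFT}; with it in hand your argument goes through verbatim, replacing ``$t_i=\delta s$'' by ``$t_i\in\Gamma s'=\Gamma s$'' where $s'$ is the segment actually crossed by~$\rho_s$.
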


Now we choose~$x \in X(0)$ and consider the following subspace
of~$\HH^2_{b}(\Gamma;\R)$:
\begin{equation*}
  B_F \coloneqq
  \bigl\{ [\delta^1 \widehat{f_{s,x}}]
  \bigm| l \in \N,\ s = ( h_1 \supset \cdots \supset h_l ) \in X_{\h}^{(l)},
         \ \text{and } \lambda(h_1), \lambda(h_l) \in F
  \bigr\}
\end{equation*}
If $F$ is an independent subset of~$G$, then on the one hand, our
results show that the cup product is trivial on~$B_F$
(Corollary~\ref{cor:RAAG:twomedian}); on the other hand, we can use
\cref{lem:restriction_is_brooks} to show that the subspace~$B_F$ is
very big:

\begin{prop}\label{prop:BFbig}
  In the situation of Setup~\ref{setup:raag}, 
  if $F\subset V(G)$ is an independent set of vertices in~$G$ and $|F|
  \geq 2$, then $\dim B_F = \infty$ and for all~$\phi, \psi \in B_F$,
  we have
  \begin{equation*}
    \phi \cup \psi = 0 \in \HH^4_{b}(\Gamma; \R).
  \end{equation*}
\end{prop}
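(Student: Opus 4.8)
The plan is to split the statement into its two assertions, both of which reduce to results already at hand.

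\emph{Vanishing of the cup product.} I would first observe that the cup product is bilinear, so it suffices to treat the spanning classes of $B_F$: let $\phi = [\delta^1 \widehat{f_{s,x}}]$ and $\psi = [\delta^1 \widehat{f_{r,x}}]$ with $s = (h_1 \supset \cdots \supset h_l)$, $r = (k_1 \supset \cdots \supset k_p)$ and $\lambda(h_1), \lambda(h_l), \lambda(k_1), \lambda(k_p) \in F$. Since $F$ is independent and $G$ is a simplicial graph (no loops), no two of these four labels are joined by an edge; in particular none of the pairs $(\lambda(h_1),\lambda(k_1))$, $(\lambda(h_1),\lambda(k_p))$, $(\lambda(h_l),\lambda(k_1))$, $(\lambda(h_l),\lambda(k_p))$ spans an edge of~$G$, so \cref{cor:RAAG:twomedian} applies and gives $[\delta^1 f_s] \cup [\delta^1 f_r] = 0$. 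I would then transport this along the orbit-map isomorphism $\HH^*_{\Gamma,b}(X;\R) \cong \HH^*_b(\Gamma;\R)$ (available since the $\Gamma$-action on~$X$ is free, hence has amenable stabilizers, \cref{thm:amenable:stab}), which is compatible with cup products and sends $[\delta^1 f_s]$ to $[\delta^1 \widehat{f_{s,x}}]$ by \cref{rem:qmtogroups}; this yields $\phi \cup \psi = 0$ in $\HH^4_b(\Gamma;\R)$ for spanning classes, hence for all $\phi,\psi \in B_F$.

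\emph{Infinite dimension.} Here I would use the restriction map $\HH^2_b(i;\R)\colon \HH^2_b(\Gamma;\R)\to \HH^2_b(\Lambda;\R)$ of \eqref{eq:gamma_to_lambda}, where $\Lambda := \langle F\rangle$ is free of rank $|F|\geq 2$, hence non-abelian. For each reduced word $w$ over~$F$, the Fern{\'o}s--Forrester--Tao construction provides an $\h$-segment $s(w)$ with $x\in\alpha(s(w))$ and $\signedlabels(s(w)) = w$; because $w$ is a word over $F^\pm$, the unsigned labels of the first and last halfspaces of $s(w)$ lie in~$F$, so $[\delta^1\widehat{f_{s(w),x}}]\in B_F$. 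By \cref{lem:restriction_is_brooks}, $f_{s(w),x}|_{\Lambda} = H_w$, and unwinding the restriction map on cochains gives $\HH^2_b(i;\R)\bigl([\delta^1\widehat{f_{s(w),x}}]\bigr) = [\delta^1\widehat{H_w}]$. Since for a non-abelian free group the span of the Brooks classes $\{[\delta^1\widehat{H_w}] : w\in\Lambda\}$ is infinite-dimensional (classical; see \cite{Brooks} and \cite[Section~2.3]{Calegari}, via the injection $\QM(\Lambda)/\QM_0(\Lambda)\hookrightarrow\HH^2_b(\Lambda;\R)$ of \cref{prop:kercomp2}), this span lies in $\HH^2_b(i;\R)(B_F)$ and therefore $\dim B_F = \infty$.

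I do not expect a serious obstacle: the only delicate points are bookkeeping ones in the second part — checking that the Fern{\'o}s--Forrester--Tao segments attached to reduced words over~$F$ satisfy the label condition defining $B_F$, and that the restriction map intertwines $f\mapsto[\delta^1\widehat f]$ with $H_w\mapsto[\delta^1\widehat{H_w}]$ — both of which are immediate once spelled out. If one prefers not to quote the classical infinite-dimensionality, one can instead pick distinct $a,b\in F$ and use that the non-self-overlapping words $w_n = a^n b$ give homogenizations $\overline{H_{w_n}}$ ($n\geq 1$) that are linearly independent, which can be checked by hand (or bootstrapped from \cref{lem:bounded_distance_brooks}).
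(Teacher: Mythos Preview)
Your proposal is correct and follows essentially the same approach as the paper: the cup-product vanishing is reduced to \cref{cor:RAAG:twomedian} (the paper just says ``immediate consequence'' without spelling out the bilinearity and orbit-map transport, which you make explicit), and the infinite-dimensionality is obtained by restricting to a free parabolic subgroup and invoking \cref{lem:restriction_is_brooks} together with the classical fact that Brooks classes span an infinite-dimensional space. The only cosmetic difference is that the paper restricts to a two-element subset~$\{a,b\}\subset F$ rather than to all of~$\langle F\rangle$, which makes no material difference.
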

\begin{proof}
  For the infinite-dimensionality, we follow the argument from the
  non-overlapping case~\cite[Proposition~4.7]{FFT}: Let $\{a,b\}$ be a
  two-element subset of~$F$. Then, the subgroup~$\Lambda
  \subset\Gamma$ generated by~$\{a,b\}$ is free. It suffices to show
  that the restriction~$\HH_b^2(\Gamma;\R) \to \HH_b^2(\Lambda;\R)$
  maps~$B_F$ to an infinite-dimensional subspace
  of~$\HH_b^2(\Lambda;\R)$.  Let $w\in \Lambda$ a reduced word with respect to $\{a,b\}$. As noted above, there
  exists an ${\h}$-segment~$s(w)$ in~$X$ that has some vertex $x$ as a head and such that the median
  quasimorphism~$f_{s(w),x}$ on~$\Gamma$ restricts to the (big) Brooks
  quasimorphism of~$w$ on~$\Lambda$
  (\cref{lem:restriction_is_brooks}). The set of (big) Brooks
  quasimorphisms on~$\Lambda$ spans an infinite-dimensional subspace
  of~$\HH_b^2(\Lambda;\R)$~\cite{Brooks, Grigorchuk} (see also
  Proposition~\ref{prop:kercomp2}). By construction, the restriction
  maps~$B_F$ onto this subspace. Hence, $\dim B_F = \infty$.

  That the cup product vanishes on~$B_F$ is an immediate consequence
  of \cref{cor:RAAG:twomedian}.
\end{proof}

\begin{rem}
  In the case in which $F = \{ v \}$ is a single vertex that is not
  central in~$\Gamma = A(G)$, we still obtain the same result.
  Indeed, suppose that $v' \in V(G)$ is another vertex such that $F' =
  \{ v, v' \}$ is independent.  Then the previous lemma shows that
  $B_{F'}$ is infinite-dimensional, by pairing each element with a
  Brooks quasimorphism of the free group on~$\{ v, v' \}$.  If we only
  consider those Brooks quasimorphisms that start and end with~$v$,
  then we obtain elements in~$B_F$, which are still non-trivial. These
  witness that the space is infinite-dimensional~\cite{Grigorchuk}.

  It is possible to choose such a~$v'$, unless $v$ is central.  But
  the latter case is of little interest, since then the canonical
  epimorphism~$A(G) \to A(G \setminus \{ v \})$ induces an
  isomorphism~$\HH_b^*(A(G\setminus \{v\});\R) \to \HH_b^*(A(G);\R)$
  in bounded cohomology.  This follows directly by applying Gromov's
  Mapping Theorem~\cite{vbc}\cite[Section 5.5]{Frigerio}.
\end{rem}

We point out that every non-trivial element of a RAAG is detected by
the homogenisation of a median quasimorphism fitting in one of the
subspaces~$B_F$ above (\cref{prop:fsxnontrivial}). Together with
\cref{prop:median_qms_are_no_brooks} this indicates that median
quasimorphisms form a rich class of quasimorphisms on RAAGs.

Recall that if $\Gamma$ is a group, the map~$\delta^1$ induces an
isomorphism between the space of homogeneous quasi-morphisms
of~$\Gamma$ and the kernel of the comparison map (see Proposition
\ref{prop:kercomp2}).
For a quasimorphism~$f\colon \Gamma \to \R$, let $\bar{f}$ be the
\emph{homogenisation} of~$f$, i.e., the quasimorphism defined by
$\bar{f}(\gamma) \coloneqq \lim_{n\to \infty} f(\gamma^n)/n$ for
all~$\gamma \in \Gamma$.

\begin{prop}\label{prop:fsxnontrivial}
  In the situation of Setup~\ref{setup:raag}, let $\gamma\in
  \Gamma \setminus \ls 1 \rs$ and $x\in X(0)$. Then there exists an~$l
  \in \N$ and a segment~$s = ( h_1 \supset \cdots \supset h_l )\in
  X_{\h}^{(l)}$ such that $\bar{f}_{s,x}(\gamma) \geq 1$ and such that
  $\ls \lambda(h_1), \, \lambda(h_l) \rs$ is an independent set in~$G$.
\end{prop}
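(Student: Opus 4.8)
The plan is to realise $\gamma$ as a \emph{skewering} isometry of~$X$ and to take $s$ to be a first-return segment along its axis; the independence condition will then hold automatically, since the first and last halfspace of~$s$ will lie in the same $\Gamma$-orbit. Concretely: since $\Gamma = A(G)$ is torsion-free and acts freely and cocompactly on the finite-dimensional $\cat0$ cube complex~$X$, the element~$\gamma$ acts as a hyperbolic combinatorial isometry with positive translation length and a combinatorial axis~$\ell$. By a standard fact, $\gamma$ skewers a hyperplane, i.e.\ there is a halfspace~$h$ with $\gamma h \subsetneq h$, so that $\cdots \supsetneq \gamma^{-1}h \supsetneq h \supsetneq \gamma h \supsetneq \gamma^2 h \supsetneq \cdots$ is a strictly descending bi-infinite chain. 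As $S(G)$ is compact, $X$ is locally finite, and there are only finitely many halfspaces~$k$ with $\gamma h \subsetneq k \subsetneq h$ (each such hyperplane $\ls k, \overline{k}\rs$ separates a fixed vertex on the boundary of~$h$ from a fixed vertex on the boundary of~$\gamma h$). Refining $h \supsetneq \gamma h$ to a maximal finite chain $h = k_0 \supsetneq k_1 \supsetneq \cdots \supsetneq k_m = \gamma h$, maximality forces consecutive terms to be tightly nested, so $s \coloneqq (k_0 \supset \cdots \supset k_m)$ is an $\h$-segment of length $l \coloneqq m+1 \geq 2$ with $h_1 = h$ and $h_l = \gamma h$. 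Since $\lambda$ is $\Gamma$-invariant, $\lambda(h_1) = \lambda(h) = \lambda(\gamma h) = \lambda(h_l)$, so $\ls \lambda(h_1),\, \lambda(h_l) \rs$ is a single vertex, hence an independent set in~$G$.

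By \cref{rem:salvettistaircase} and \cref{prop:fs:qm}, $f_s$ is a quasimorphism of $\Gamma \actson X$; moreover $\Gamma s \neq \Gamma \overline{s}$, since a relation $\delta s = \overline{s}$ would give $\delta\gamma h = \overline{h}$, so that $\delta\gamma$ would interchange the two sides of the hyperplane $\ls h, \overline{h}\rs$ — impossible, as the $\Gamma$-action on $\tilde S(G)$ has no hyperplane inversions. Thus $f_s \neq 0$ and $f_s$ is computed by \eqref{formula:fs}. Next one bounds $\bar f_{s,x}(\gamma) = \lim_{n\to\infty} f_s(x,\gamma^n x)/n$ from below. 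The translates chain up: $\gamma^j s$ runs from $\gamma^j h$ down to $\gamma^{j+1}h$, and since $\gamma^j h$ (resp.\ $\gamma^{-j}h$) shrinks to the attracting (resp.\ repelling) endpoint of~$\ell$ as $j \to \infty$, there are constants $i_0, j_0$ depending only on~$x$ and~$h$ with $x \notin \gamma^j h$ and $\gamma^n x \in \gamma^{j+1}h$ whenever $j_0 \leq j \leq n-1-i_0$. For such~$j$, every halfspace of $\gamma^j s$ is sandwiched between $\gamma^j h$ and $\gamma^{j+1}h$ and hence separates $\gamma^n x$ from~$x$, so $\gamma^j s \in [x,\gamma^n x]_{\h}^{(l)} \cap \Gamma s$; this produces at least $n - O(1)$ distinct elements of $[x,\gamma^n x]_{\h}^{(l)} \cap \Gamma s$.

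The remaining point — and this is the main obstacle — is to bound $\bigl|[x,\gamma^n x]_{\h}^{(l)} \cap \Gamma \overline{s}\bigr|$ by a constant independent of~$n$. I would deduce this from the fact that $\gamma$ is not conjugate to~$\gamma^{-1}$, which holds because RAAGs are bi-orderable: a translate $\delta\overline{s}$ contained in $[x,\gamma^n x]_{\h}$ is ``oriented against'' the axis and so can only occur within bounded distance of~$x$ or of~$\gamma^n x$ — otherwise one would extract a long reversed run along~$\ell$, contradicting $\gamma\not\sim\gamma^{-1}$, exactly as a cyclically reduced word in a free group has no occurrence of its own inverse inside its powers — and near each endpoint only boundedly many $\h$-segments of length~$l$ can fit (cf.\ \cref{lem:fd:segments:fix}). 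Granting this, $f_s(x,\gamma^n x) = \bigl|[x,\gamma^n x]_{\h}^{(l)} \cap \Gamma s\bigr| - \bigl|[x,\gamma^n x]_{\h}^{(l)} \cap \Gamma \overline{s}\bigr| \geq n - O(1)$, whence $\bar f_{s,x}(\gamma) \geq 1$, as required.

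Finally, this delicate count can be bypassed whenever the retraction $p_F \colon \Gamma \to \langle F \rangle$ onto some graphical free subgroup with $F \subset V(G)$ independent satisfies $p_F(\gamma) \neq 1$: one then chooses a reduced word~$w$ over~$F$ with $\overline{H_w}(p_F(\gamma)) \geq 1$ (possible since $p_F(\gamma)\neq 1$ in the free group $\langle F\rangle$) and, as in \cref{lem:restriction_is_brooks}, a segment~$s$ over~$F$ with head~$x$ and $\signedlabels(s) = w$; since every translate of~$s$ or~$\overline{s}$ consists of $F$-labelled halfspaces, $f_{s,x}$ records only the $F$-labelled part of geodesics and hence equals $H_w \circ p_F$, so that $\bar f_{s,x}(\gamma) = \overline{H_w}(p_F(\gamma)) \geq 1$ with $\ls \lambda(h_1),\, \lambda(h_l) \rs \subseteq F$ independent. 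I expect the honest work in the general argument to lie entirely in the $O(1)$ bound on reverse translates above.
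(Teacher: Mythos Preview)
Your construction is close in spirit to the paper's but leaves the hardest step unproved, and your alternative route contains an error.

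The paper does not build $s$ from a skewered halfspace; instead it takes $s$ to be a \emph{maximal $\gamma$-nested} segment in $[x,\gamma x]_{\h}$ (meaning $h_l\supset\gamma h_1$, maximal with this property) and then invokes F\"ohn~\cite{FFTsimp} for two ready-made facts: for all $n\geq 1$, the interval $[x,\gamma^n x]_{\h}$ contains at least $n$ non-overlapping translates of $s$ and \emph{no} translate of $\bar s$. This gives $f_s(x,\gamma^n x)\geq n$ outright, with no separate bound on reverse translates needed. The independence of $\{\lambda(h_1),\lambda(h_l)\}$ comes from a further lemma of F\"ohn --- that for a maximally $\gamma$-nested segment the pair $h_l\supset\gamma h_1$ is tightly nested --- combined with \cref{lem:tight_implies_non_commuting} of \cref{lem:basic_properties_RAAGs}. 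Your choice $h_l=\gamma h_1$ (so $\lambda(h_1)=\lambda(h_l)$ is a singleton) is a nice shortcut for this last step, and your forward count is fine; but the $O(1)$ bound on $\bar s$-translates is exactly the content of F\"ohn's lemmas that the paper imports as a black box. Your sketch via bi-orderability and $\gamma\not\sim\gamma^{-1}$ does not, as written, control translates $\delta\bar s$ for \emph{arbitrary} $\delta\in\Gamma$: the free-group analogy ``no copy of $w^{-1}$ inside $w^n$'' uses that the segment \emph{is} the word, whereas your $s$ is not tied to any normal form for $\gamma$, so nothing prevents a $\Gamma$-translate of $\bar s$ from sitting deep inside $[x,\gamma^n x]_{\h}$ without producing a conjugacy $\gamma\sim\gamma^{-1}$.

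Your final paragraph is incorrect. \Cref{lem:restriction_is_brooks} says only that $f_{s,x}|_{\Lambda}=H_w$; it does \emph{not} follow that $f_{s,x}=H_w\circ p_F$ on all of $\Gamma$. Indeed, \cref{prop:median_qms_are_no_brooks} (proved later in the same section) shows that $f_{s,x}$ is typically not even at bounded distance from $H_w\circ p_F$: one can write down explicit $\gamma$ with $f_{s,x}(\gamma^n)=0$ but $H_w(p_F(\gamma^n))\geq n-1$. So the ``bypass'' does not work, and you are back to needing the reverse-translate bound.
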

\begin{proof}
Call a segment~$s = ( h_1 \supset \cdots \supset h_l )$ \emph{$\gamma$-nested} if $h_l \supset \gamma h_1 $, i.e.~if $s$ and $\gamma s$ assemble to a segment
  $( h_1 \supset \cdots \supset h_l \supset \gamma h_1 \supset \cdots \supset \gamma h_l )$.
  Let $s$ be a maximal $\gamma$-nested segment in $[x, \gamma x]_{\h}$.

  F\"ohn proves~\cite[Lemma~4 and Proof of Theorem 15]{FFTsimp} that
  for all~$n \geq 1$, the number of
  non-overlapping translates of~$s$ in~$[x, \gamma^n x]_{\h}$ is at
  least~$n$ and that $[x, \gamma^n x]_{\h}$ contains no translate
  of~$\bar{s}$. This implies that $f_{s}(x,\gamma^n x)$, which is the
  number of (possibly overlapping) translates of~$s$ in~$[x,\gamma^n
    x]_{\h}$ minus the number of translates of~$\bar{s}$
  in~$[x,\gamma^n x]_{\h}$, is at least~$n$. In particular,
  $\bar{f}_{s,x} (\gamma) = \lim_{n\to \infty} f_s(x,\gamma^n x)/n
  \geq 1$.

  Furthermore, whenever $s = ( h_1 \supset \cdots \supset h_l )$ is
  maximally $\gamma$-nested, then $h_l \supset \gamma h_1$ is tightly
  nested~\cite[Lemma 12]{FFTsimp}. So
  \cref{lem:tight_implies_non_commuting} of \cref{lem:basic_properties_RAAGs} implies that $\ls
  \lambda(h_1), \lambda(h_l) \rs$ is independent.
\end{proof}

\subsection{Comparison with pullbacks of Brooks quasimorphisms}\label{ss:pullbackbrooks}

The non-trivial median classes described above are
genuinely related to the cubical (or median) structure of the $\cat0$
cube complex and not just equal to the classes that one obtains as
pullbacks of Brooks quasimorphisms via \cref{eq:lambda_to_gamma}.

\begin{prop}
  \label{prop:median_qms_are_no_brooks}
  In the situation of Setup~\ref{setup:raag}, let~$s$ be
  an~$\h$-segment, and suppose that one of the following holds:
  \begin{enumerate}
  \item \label{assumption1} The set of labels of halfspaces in~$s$ is
    not independent in $G$;
  \item \label{assumption2} The set of labels of halfspaces in~$s$ is
    an independent set $F \subset V(G)$ of size $2\leq |F| < |V(G)|$, such
    that~$\Lambda := \langle F \rangle$ is not a direct factor of
    $\Gamma$, and~$\signedlabels(s)$ is cyclically
    reduced.
  \end{enumerate}
  Then for every~$x \in X(0)$, the quasimorphism~$f_{s, x}$ is
  \emph{not} at bounded distance from the pullback of a non-zero
  Brooks quasimorphism on a parabolic free subgroup.
  \end{prop}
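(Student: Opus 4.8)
The plan is to argue by contradiction. Suppose $f_{s,x}$ is at bounded distance (write $\sim$) from $p_{F'}^{\,*}H_u$, where $F'\subseteq V(G)$ is independent, $p_{F'}\colon\Gamma\to\langle F'\rangle$ is the canonical retraction, and $u\in\langle F'\rangle\setminus\{1\}$, so that $H_u$ is unbounded. Recall that two quasimorphisms are at bounded distance if and only if they have the same homogenisation, and that homogenisation commutes with restriction to subgroups and with pullback along homomorphisms; we pass between these viewpoints freely. As a preliminary reduction, $f_{s,x}$ does not essentially depend on $x$: using that $f_s$ is $\Gamma$-invariant and antisymmetric together with the estimate on $\delta^{1}f_s$ from \cref{prop:fs:qm}, one sees that $\big|f_s(x,\gamma x)-f_s(y,\gamma y)\big|$ is uniformly bounded in $\gamma$, so $f_{s,x}\sim f_{s,y}$ for all vertices $x,y$. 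Hence we may assume $x\in\alpha(s)$ and apply \cref{lem:restriction_is_brooks} whenever convenient.

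\emph{Assumption~\ref{assumption1}.} Restrict to $\langle F'\rangle$. Since $F'$ is independent while the set of labels of $s$ is not, some label of $s$ lies outside $F'$, so $\lambda^{\pm}(s)\notin\langle F'\rangle$ and \cref{lem:restriction_is_brooks} (with $F=F'$) gives $f_{s,x}|_{\langle F'\rangle}=H_{\lambda^{\pm}(s)}$, which vanishes since $\lambda^{\pm}(s)$ uses a generator outside $F'$. On the other hand $p_{F'}$ is the identity on $\langle F'\rangle$, so $(p_{F'}^{\,*}H_u)|_{\langle F'\rangle}=H_u$, which is unbounded. Restriction preserves bounded distance, so this contradicts $f_{s,x}\sim p_{F'}^{\,*}H_u$.

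\emph{Assumption~\ref{assumption2}.} Set $w:=\lambda^{\pm}(s)$; it is cyclically reduced with $|w|\ge 2$, so $H_w(w^{n})=n-O(1)$ and no copy of $w^{-1}$ occurs in $w^{n}$, whence $\overline{H_w}(w)\ge 1$. Restricting to $\Lambda=\langle F\rangle$, \cref{lem:restriction_is_brooks} gives $f_{s,x}|_{\Lambda}=H_w$, while $p_{F'}|_{\Lambda}$ is the canonical retraction $r$ of the free group $\Lambda$ onto $\langle F\cap F'\rangle$, so $(p_{F'}^{\,*}H_u)|_{\Lambda}$ factors through $r$. Evaluating the identity of homogenisations $\overline{H_w}=\overline{(p_{F'}^{\,*}H_u)|_{\Lambda}}$ at $w$ and at $r(w)$ yields $\overline{H_w}(w)=\overline{H_w}(r(w))$; but if $F\cap F'\subsetneq F$ then $r(w)$ omits a generator occurring in $w$, forcing $\overline{H_w}(r(w))=0<1\le\overline{H_w}(w)$, a contradiction. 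Hence $F\subseteq F'$, $r=\operatorname{id}$, so $\overline{H_w}=\overline{H_u}$ on $\Lambda$, and \cref{lem:bounded_distance_brooks} forces $u=w$. We are thus reduced to refuting $f_{s,x}\sim p_{F'}^{\,*}H_w$ under the assumption $F\subseteq F'$.

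This is where the hypothesis that $\Lambda$ is not a direct factor of $\Gamma$ enters: since $\Lambda=\langle F\rangle$ is a direct factor exactly when $G$ is the join of the edgeless graph on $F$ with $G[V(G)\setminus F]$, the failure of this produces $f_0\in F$ and $v_0\in V(G)\setminus F$ with $\{f_0,v_0\}\notin E(G)$. The goal is then a sequence $(\gamma_{n})$ in $\Gamma$ with $p_{F'}^{\,*}H_w(\gamma_{n})$ unbounded but $f_{s,x}(\gamma_{n})$ bounded. Starting from geodesics in $\widetilde{S}(G)$ realising translates of $w^{n}$ --- controlled via \cref{lem:basic_properties_RAAGs} --- one inserts copies of $v_0$ so that a halfspace dual to $v_0$ becomes \emph{strictly nested} between two consecutive halfspaces of each potential occurrence of a translate of $s$; because $f_0$ and $v_0$ do not commute this halfspace is genuinely nested, not transverse, and therefore destroys tight-nestedness, removing the occurrence from the counting that defines $f_{s,x}$ (the number of surviving occurrences is controlled by \cref{lem:fd:segments:fix}); meanwhile, when $v_0\notin F'$, the projection $p_{F'}$ erases the inserted letters and reassembles the occurrences, so $p_{F'}^{\,*}H_w(\gamma_n)$ still grows. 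I expect this construction to be the main obstacle: one must choose the insertion point so that both labels adjacent to $v_0$ fail to commute with $v_0$ --- which is why cyclic reducedness of $w$ (ensuring genuine repeated internal structure of $w^{n}$) and the freedom in the choice of $f_0$ are used --- and one must then verify that the inserted halfspace obstructs \emph{every} occurrence of every translate of $s$, not just the evident ones, which needs the canonical-form description of $\h$-segments of $\widetilde{S}(G)$. The residual case $v_0\in F'$, where $p_{F'}$ does not erase $v_0$, must be handled apart: then $F\cup\{v_0\}$ is independent, and one either passes to the parabolic free subgroup $\langle F\cup\{v_0\}\rangle$ and induces on $|V(G)|$, or distinguishes $f_{s,x}$ from $p_{F'}^{\,*}H_w$ on elements contained in no parabolic free subgroup.
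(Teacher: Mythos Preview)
Your treatment of Assumption~\ref{assumption1} is essentially the same as the paper's (the paper spells out the observation directly rather than quoting \cref{lem:restriction_is_brooks}, but the content is identical). Your reduction in Assumption~\ref{assumption2} to the situation $F\subseteq F'$ and $u=w=\lambda^{\pm}(s)$ is also correct and parallels the paper; one small point you glossed over is that to apply \cref{lem:bounded_distance_brooks} you need both $H_w$ and $H_u$ to be Brooks quasimorphisms on the \emph{same} free group, so you should first observe that $u\in\Lambda$ (if $u$ used a generator outside~$F$, then $H_u|_\Lambda$ would vanish, contradicting $\overline{H_u}|_\Lambda=\overline{H_w}\neq 0$).

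The genuine gap is that after this reduction you do not prove anything: you describe a strategy (``insert copies of~$v_0$ \dots''), flag it as ``the main obstacle'', and leave both the construction and its verification undone. The paper carries this out explicitly. It writes a cyclic permutation of~$w$ as $\tilde w = w_1 v^{k} w_2$ with $v\in F$ not commuting with some $v'\in V(G)\setminus F$, sets
\[
  w' \coloneqq w_1\, v^{-k}\, v'\, v^{3k}\, v'\, v^{-k}\, w_2,
\]
and checks by hand that $(w')^n$ is the label sequence of the unique geodesic from~$x$ to~$(w')^n x$ (via \cref{lem:basic_properties_RAAGs}\ref{lem:normal_form_RAAGs}) and contains no subword equal to~$w$ or~$w^{-1}$; hence $f_{s,x}((w')^n)=0$. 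On the other hand $p_F(w')=\tilde w$, so $H_w(p_F((w')^n))\geq n-1$. This is exactly the witnessing sequence you were looking for, but you never wrote it down or verified either property. Note also that the paper sidesteps your ``residual case $v_0\in F'$'' by first reducing to $\Lambda'=\Lambda$ (equivalently, working with the retraction onto~$\Lambda$ rather than onto~$\Lambda'$); you neither make this reduction nor handle the case, so your outline is incomplete on that front as well.
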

  
\begin{rem}
  We formulated assumption~(\ref{assumption2}) in order to have a
  comparably short proof that still witnesses that many median classes are not at
  a bounded distance from pullbacks of Brooks quasimorphisms. However
  this assumption is by no means necessary. For instance, with some
  more work, one can drop the assumption that~$\signedlabels(s)$ is
  cyclically reduced.  On the other hand, the hypothesis
  that~$\Lambda$ is not a direct factor is necessary (see
  Example~\ref{ex:FxF}).
\end{rem}
  
\begin{proof}[Proof of \cref{prop:median_qms_are_no_brooks}]
We first show (by contraposition) that Assumption~(\ref{assumption1}) implies the conclusion.
  Let~$s$ be an~$\h$-segment of~$X$ and assume that the conclusion does not hold. Then there exist~$x\in X(0)$, a parabolic subgroup~$\Lambda = \langle F \rangle$ and
  a word~$w \in \Lambda$ such that~$f_{s, x}$ is at a bounded distance
  from~$\widetilde{H_w} := H_w \circ p$, where $p : \Gamma \to
  \Lambda$ is the retraction and $H_w$ is non-zero.
  Since~$H_w|_{\Lambda}$ is unbounded,
  $f_{s, x}|_{\Lambda}$ is also unbounded. Now, if $\gamma \in
  \Lambda$, then by \cref{lem:normal_form_RAAGs} of \cref{lem:basic_properties_RAAGs} there is a unique
  geodesic from~$x$ to~$\gamma x$, and the sequence of labels on the
  edges of this geodesic describes a reduced word in~$F^\pm$
  representing~$\gamma$. This implies that every halfspace~$h$
  in~$[x,\gamma x]_{\h}$ has label~$\lambda(h)\in F$. In particular,
  $[x,\gamma x]_{\h}$ can only contain a translate of~$s$ or
  of~$\overline{s}$ if every element of~$\signedlabels(s)$ is contained
  in~$F^{\pm}$.  This shows that~$\signedlabels(s)$ describes a word
  in~$\Lambda$, so the set of labels of halfspaces of $s$ is independent in $G$.

We now show that  Assumption~(\ref{assumption2}) implies the conclusion.
  Let~$x \in X(0)$ and let~$s$ be an~$\h$-segment such the set of
  labels of halfspaces in~$s$ is an independent set~$F \subset V(G)$
  with~$|F| \geq 2$, such that $\lambda^\pm(s)$ is cyclically reduced,
  and such that $\Lambda := \langle F \rangle$ is
  not a direct factor of~$\Gamma$.

  \emph{Assume} for a contradiction that~$f_{s, x}$ is at a bounded
  distance from the pullback of a Brooks quasimorphism on a parabolic
  free subgroup~$\Lambda'$.  The proof of
  Proposition~\ref{prop:median_qms_are_no_brooks} with
  hypothesis~(\ref{assumption1}) shows that then~$\signedlabels(s) \in
  \Lambda'$.  Moreover, Lemma~\ref{lem:restriction_is_brooks} implies
  that~$f_{s,x}|_{\Lambda'} = H_{\signedlabels(s)}$, which combined
  with Lemma~\ref{lem:bounded_distance_brooks} lets us assume
  that~$\Lambda = \Lambda'$ and~$f_{s, x}$ is at a bounded distance
  from~$\widetilde{H}_w$ with~$w = \signedlabels(s) \in \Lambda$
  and~$\widetilde{H}_w = H_w \circ p$, where~$p : \Gamma \to \Lambda$
  is the retraction.

  Assumption~(\ref{assumption2}) also tells us that every element
  of~$F$ occurs in~$\lambda(s)$, and that~$\Lambda$ is not a direct
  factor of~$\Gamma$.  Therefore there exist~$v \in F$ and~$v' \in
  V(G) \setminus F$ such that $v$ and~$v'$ are not connected by an
  edge.  There is a cyclic permutation of~$w$ of the
  form~$\tilde{w} = v^{l} w v^{-l}$ with~$l\in \Z$ and such that $\tilde{w}$
  does not end with $v$ or~$v^{-1}$.
  
  Up to an automorphism we may assume that the first occurrence of~$v$
  or~$v^{-1}$ in~$\tilde{w}$ is~$v$. This implies that $w$
  neither starts nor ends with~$v^{-1}$. We may write
  \begin{equation*}
  	\tilde{w} = w_1 v^k w_2,
  \end{equation*}
  where~$k >0$, where $w_1$ does not contain $v$ or~$v^{-1}$ and $w_2$ does
  not start with $v$ or $v^{-1}$. (By our assumption on $\tilde{w}$,
  the word~$w_2$ also does not end with $v$ or~$v^{-1}$.) We note the
  following:
  \begin{equation}
  \label{eq:prop_tilde_w1} \text{If $w_1$ is non-trivial, then $\tilde{w} = w$.}
  \end{equation}
  \begin{equation}
  \label{eq:prop_tilde_w2} \text{$w_2$ is a subword of $w$.}
  \end{equation}
  Let
  \begin{equation*}
    w' \coloneqq w_1 v^{-k} v' v^{3k} v' v^{-k} w_2.
  \end{equation*}
  Then~$w'$ is a reduced expression such that each two consecutive
  letters appearing in it are either equal to one another or do not
  commute.  Moreover, since $w$ is cyclically reduced, the same holds
  for powers of~$w'$.

  We claim that for no~$n\geq 1$, the power~$(w')^n$ contains a copy of $w$
  or~$w^{-1}$: First assume that $w$ is a subword of~$(w')^n$ for some~$n \geq 1$.
  Then as $w$ does not contain~$v'$, it must be a subword
  of~$v^{-k} w_2 w_1 v^{-k}$. But as noted above, $w$ neither starts nor
  ends with~$v^{-1}$, so this implies that $w$ is a subword of~$w_2
  w_1$. This is impossible because $\len(w) = \len(\tilde{w})>
  \len(w_2 w_1)$.  Now assume that $w^{-1}$ is contained in~$(w')^n$.
  Then again, it must be contained in~$v^{-k} w_2 w_1
  v^{-k}$, which is equivalent to $w$ being a subword of~$v^{k}
  w_1^{-1} w_2^{-1} v^{k}$. If $w_1$ is non-trivial, by
  \cref{eq:prop_tilde_w1}, the word $w = \tilde{w}$ neither starts nor
  ends with~$v$. Hence, the inequality~$\len(w)> \len(w_2 w_1) =
  \len(w_1^{-1} w_2^{-1})$ again gives a contradiction. If on the
  other hand $w_1$ is trivial, then $w$ is a subword of~$v^{k}
  w_2^{-1} v^{k}$. By \cref{eq:prop_tilde_w2}, the same is true
  for~$w_2$. As $w_2$ is not a power of~$v$, this implies that $w_2$ and
  $w_2^{-1}$ overlap, which is impossible (see, e.g.,
  \cite[Lemma~3.14]{Fournier:cup}).
	
  It follows that~$f_{s, x}((w')^n) = 0$ for all~$n \in \N$. Indeed, as
  noted above, $(w')^n$ is a reduced expression such that each two
  consecutive letters appearing in it are either equal to one another
  or do not commute.  Hence, by \cref{lem:normal_form_RAAGs} of \cref{lem:basic_properties_RAAGs},
  there is a unique geodesic in~$X$ from~$x$ to~$(w')^n x$ and the
  sequence of labels of its edges is given by~$(w')^n$. This implies
  that for every $\h$-segment~$t$ contained in~$[x, (w')^n x ]_{\h}$,
  the sequence~$\signedlabels(t)$ must be a subword of~$(w')^n$.
  However, we showed in the previous paragraph that~$(w')^n$ contains no copy
  of~$w = \signedlabels(s)$ or~$w^{-1} =
  \signedlabels(\bar{s})$.
  Hence, no translate of~$s$ or~$\bar{s}$ is
  contained in in~$[x, (w')^n x ]_{\h}$.
 
  On the other hand, the epimorphism~$p\colon\Gamma \to \Lambda$
  sends~$w'$ to~$\tilde{w}$. So, we obtain~$\widetilde{H}_w((w')^n) =
  H_w(\tilde{w}^n) \geq n-1$ because~$\tilde{w}$ is a cyclic
  permutation of the letters of $w$, the word $w$ is cyclically
  reduced and no occurrence of~$w^{-1}$ can overlap with~$w$.  Hence
  $f_{s,x}$ and~$\widetilde{ H}_w$ are not at bounded distance, which
  is a contradiction.
\end{proof}

\appendix
\section{Cup products and~$\Lex$}\label{appx:lex}

Group epimorphisms induce injections on the level of bounded
cohomology in degree~$2$~\cite{bouarich2}\cite[Section 2.7]{Frigerio}.
It is not known whether this behaviour persists in higher degrees.  We
explain how vanishing results for cup products have the potential to
lead to non-examples in degree~$4$ and all degrees~$\geq 6$.

\begin{defi}
  Let $\Lex$ be the class of all groups~$\Lambda$ with the following
  property: For all groups~$\Gamma$ and all epimorphisms~$f \colon
  \Gamma \longrightarrow \Lambda$, the induced map~$\HH^*_b(f;\R) \colon
  \HH^*_b(\Lambda;\R) \longrightarrow \HH^*_b(\Gamma;\R)$
  is injective.
\end{defi}

Bouarich shows that this class~$\Lex$ contains many geometrically
defined groups (e.g., free and surface groups) and that $\Lex$ is closed with
respect to several ``amenable'' constructions~\cite{bouarichexact}.
Moreover, the class~$\Lex$ has been studied in the context of
boundedly acyclic groups~\cite{bcfp}. It seems likely that not every
group is in~$\Lex$, but no examples seem to be known.

Our recipe relating cup products to~$\Lex$ relies on the interaction
between bounded cohomology and $\ell^1$-homology. The
\emph{$\lone$-homology functor~$\HH_*^{\ell^1}(\args;\R)$} is defined as
the homology of the $\lone$-completion of the homogeneous complex on
groups (or equivalently of the $\lone$-completion of the singular
chain complex of classifying
spaces)~\cite{matsumotomorita,Loehl1}. For a group~$\Gamma$,
evaluation of bounded cocycles on $\lone$-cycles leads to a natural
bilinear map~$\langle\cdot,\cdot\rangle \colon \HH_b^*(\Gamma;\R)
\times \HH_*^{\lone}(\Gamma;\R) \to \R$.

\begin{defi}[$\lone$-detectable]
  Let $\Gamma$ be a group and $n \in \N$. A class~$\varphi \in
  \HH_b^n(\Gamma;\R)$ is \emph{$\lone$-detectable} if there exists
  an~$\alpha \in \HH_n^{\lone}(\Gamma;\R)$ with~$\langle
  \varphi,\alpha\rangle \neq 0$.
\end{defi}

\begin{prop}\label{prop:eval}
  \hfil
  \begin{enumerate}
  \item
    Cross-products of $\lone$-detectable classes are
    $\lone$-detectable and hence non-trivial.
  \item
    All non-trivial classes in~$\HH_b^2(\args;\R)$ are
    $\lone$-detectable.
  \item
    If $M$ is an oriented closed connected hyperbolic manifold
    of dimension~$n\geq 2$, then there exists an $\lone$-detectable class
    in~$\HH_b^n(\pi_1(M);\R)$.
  \end{enumerate}
\end{prop}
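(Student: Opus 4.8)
The plan is to derive all three statements from the standard duality theory between bounded cohomology and $\ell^1$-homology \cite{matsumotomorita, Loehl1}. Recall that $\CC^*_b(\Gamma;\R)$ is the topological dual of the $\ell^1$-chain complex $\CC_*^{\lone}(\Gamma;\R)$, so the evaluation pairing $\HH^*_b(\Gamma;\R)\times\HH_*^{\lone}(\Gamma;\R)\to\R$ only depends on a cycle modulo the \emph{closure} of the boundaries, and it induces a topological isomorphism $\overline{\HH}^n_b(\Gamma;\R)\cong\bigl(\overline{\HH}_n^{\lone}(\Gamma;\R)\bigr)'$ onto the dual of reduced $\ell^1$-homology (bars denote reduced (co)homology, i.e., the quotients by the closures of the (co)boundaries). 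The three parts amount to feeding suitable classes into this machine. I expect that the only place needing genuine care is part~(1), where one must set up an $\ell^1$-cross-product; parts~(2) and~(3) are short once the right inputs are in place.

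For part~(1), I would use that the $\ell^1$-completed bar complexes carry a cross-product: the shuffle (Eilenberg--Zilber) map is bounded, and since the $\ell^1$-completed tensor product of $\ell^1(A)$ and $\ell^1(B)$ is $\ell^1(A\times B)$ everything stays inside the $\ell^1$-world. This yields a cross-product $\HH_p^{\lone}(\Gamma_1;\R)\otimes_\R\HH_q^{\lone}(\Gamma_2;\R)\to\HH_{p+q}^{\lone}(\Gamma_1\times\Gamma_2;\R)$ that is dual to the cross-product on bounded cohomology in the sense of a Fubini identity $\langle\varphi_1\times\varphi_2,\ \alpha_1\times\alpha_2\rangle=\langle\varphi_1,\alpha_1\rangle\cdot\langle\varphi_2,\alpha_2\rangle$. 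If $\alpha_i$ detects $\varphi_i$, then $\alpha_1\times\alpha_2$ detects $\varphi_1\times\varphi_2$, which is in particular nontrivial.

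For part~(2), the extra ingredient is that $\HH^2_b(\Gamma;\R)$ is always reduced. This follows from the observation that $\delta^1\colon\ell^\infty(\Gamma)\to\ell^\infty(\Gamma^2)$ is bounded below: in the inhomogeneous description one has $|f(g^2)|\ge 2|f(g)|-\|\delta^1 f\|_\infty$, hence $|f(g^{2^k})|\ge 2^k|f(g)|-(2^k-1)\|\delta^1 f\|_\infty$ for all $k\ge 1$, and since $f$ is bounded this forces $\|f\|_\infty\le\|\delta^1 f\|_\infty$; thus $\im\delta^1$ is closed and the seminorm on $\HH^2_b(\Gamma;\R)$ is a norm. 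Consequently a nonzero $\varphi\in\HH^2_b(\Gamma;\R)$ stays nonzero in $\overline{\HH}^2_b(\Gamma;\R)\cong\bigl(\overline{\HH}_2^{\lone}(\Gamma;\R)\bigr)'$, so it pairs nontrivially with some $\bar\alpha\in\overline{\HH}_2^{\lone}(\Gamma;\R)$; lifting $\bar\alpha$ along the surjection $\HH_2^{\lone}(\Gamma;\R)\twoheadrightarrow\overline{\HH}_2^{\lone}(\Gamma;\R)$ and using that the pairing factors through reduced $\ell^1$-homology produces the desired $\ell^1$-detecting class.

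For part~(3), I would invoke Gromov's theorem that a closed oriented connected hyperbolic $n$-manifold $M$ with $n\ge 2$ has positive simplicial volume $\|M\|$ \cite{vbc}. Since $M$ is aspherical, $\HH_n^{\lone}(M;\R)\cong\HH_n^{\lone}(\pi_1(M);\R)$ and $\HH^n_b(M;\R)\cong\HH^n_b(\pi_1(M);\R)$. Positivity of $\|M\|$ means precisely that the image of the fundamental class $[M]_\R$ under the natural map $\HH_n(M;\R)\to\overline{\HH}_n^{\lone}(M;\R)$ is nonzero; by the duality isomorphism there is then a class $\omega\in\HH^n_b(\pi_1(M);\R)$ with $\langle\omega,\ c_*[M]_\R\rangle\ne 0$, where $c_*[M]_\R\in\HH_n^{\lone}(\pi_1(M);\R)$ denotes the image of $[M]_\R$ under $\HH_n(M;\R)\to\HH_n^{\lone}(M;\R)$. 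This $\omega$ is the required $\ell^1$-detectable class (and in particular nontrivial). The hypothesis $n\ge 2$ is genuinely needed, since for $n=1$ one has $\pi_1(M)\cong\Z$, $\HH^1_b(\Z;\R)=0$, and $\|S^1\|=0$.
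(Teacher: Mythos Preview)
Your proposal is correct and follows essentially the same route as the paper: multiplicativity of the evaluation pairing under cross-products for~(1), the Matsumoto--Morita observation that~$\HH^2_b$ carries a genuine norm combined with duality for~(2), and positivity of simplicial volume plus the duality principle for~(3). The paper's proof is terser---it cites~\cite[Proposition~2.5]{clara} for the Fubini identity (with a sign~$\pm$ you omitted, though this is immaterial for detectability), cites Matsumoto--Morita directly for~(2), and names the dual fundamental class explicitly in~(3) rather than extracting a detecting class abstractly from duality---but the underlying arguments are the same.
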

\begin{proof}
  The first part follows from the fact that cross products
  are multiplicative with respect to evaluation in the sense that
  \[ \langle \varphi \times \psi, \alpha \times \beta \rangle
  = \pm \langle\varphi, \alpha\rangle \cdot \langle \psi, \beta\rangle 
  \]
  holds for all bounded cohomology classes~$\varphi$, $\psi$ and all
  $\lone$-homology classes~$\alpha$, $\beta$ in compatible
  degrees~\cite[Proposition~2.5]{clara}.
  
  The second part follows from an observation by Matsumoto and
  Morita~\cite[Corollary~2.7, Theorem~2.3]{matsumotomorita}.

  The third part is a consequence of the duality principle and the
  fact that the simplicial volume of oriented closed connected
  hyperbolic manifolds is non-zero~\cite{vbc}. Thus, the dual
  fundamental class is in the image of the comparison map and the
  fundamental class witnesses $\lone$-detectability.
\end{proof}

\begin{thm}\label{thm:epi}
  Let $\Lambda_1$, $\Lambda_2$, $\Lambda_3$ be groups. We consider the
  (co)product groups
  $\Gamma  \coloneqq (\Lambda_1 * \Lambda_2) \times \Lambda_3$
  and 
  $\Lambda \coloneqq (\Lambda_1 \times \Lambda_2) \times \Lambda_3$ 
  and the group homomorphism
  \[ f \coloneqq \bigl((f_1,f_2) \circ \Delta \bigr) \times \id_{\Lambda_3}
       \colon \Gamma \longrightarrow \Lambda
  \]
  given by the projections~$f_j \colon \Gamma \longrightarrow
  \Lambda_j$ to the first and second free factor, respectively, and
  the diagonal~$\Delta \colon \Gamma \longrightarrow \Gamma \times
  \Gamma$. 
  Furthermore, we assume that there exist~$n_1, n_2, n_3 \in \N$
  with~$n_1 + n_2 + n_3 = n$ and bounded cohomology classes $\varphi_j
  \in \HH_b^{n_j}(\Lambda_j;\R)$ for each~$j \in \{1,2,3\}$ with the
  following properties:
  \begin{enumerate}
  \item[(D)]
    The classes $\varphi_1$, $\varphi_2$, $\varphi_3$ are $\lone$-detectable.
  \item[(C)] We have
    $\HH_b^{n_1}(f_1;\R) (\varphi_1) \cup \HH_b^{n_2}(f_2;\R) (\varphi_2) = 0
       \in \HH_b^{n}(\Gamma;\R).
    $ 
  \end{enumerate}
  Then $\HH_b^n(f;\R)$ is not injective. In particular, the group~$\Lambda$
  is not in~$\Lex$.
\end{thm}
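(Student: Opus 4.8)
The plan is to produce a single nonzero class in $\HH_b^n(\Lambda;\R)$ that is killed by $\HH_b^n(f;\R)$; since $f$ is surjective --- given $a \in \Lambda_1$, $b \in \Lambda_2$, $c \in \Lambda_3$, the element $(ab,c) \in (\Lambda_1 * \Lambda_2) \times \Lambda_3 = \Gamma$ is sent by $f$ to $((a,b),c)$ --- this will immediately give that $\HH_b^*(f;\R)$ is not injective, hence that $\Lambda \notin \Lex$. The test class is the external cross product $\varphi \coloneqq \varphi_1 \times \varphi_2 \times \varphi_3 \in \HH_b^n(\Lambda;\R)$, where we use $n_1 + n_2 + n_3 = n$.

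First I would verify that $\varphi \neq 0$. By hypothesis~(D) each $\varphi_j$ is $\ell^1$-detectable, so two applications of Proposition~\ref{prop:eval}(1) --- first to $\varphi_1$ and $\varphi_2$, then to $\varphi_1 \times \varphi_2$ and $\varphi_3$ --- show that $\varphi$ is $\ell^1$-detectable, and in particular nonzero.

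Next I would compute $\HH_b^n(f;\R)(\varphi)$ and show it vanishes. Writing $\pi_j \colon \Lambda \to \Lambda_j$ for the three coordinate projections, the cross product is by definition a cup product of pullbacks:
\[ \varphi = \HH_b^*(\pi_1;\R)(\varphi_1) \cup \HH_b^*(\pi_2;\R)(\varphi_2) \cup \HH_b^*(\pi_3;\R)(\varphi_3). \]
Pushing this forward along $f$ and using naturality of the cup product together with contravariant functoriality of bounded cohomology, one obtains
\[ \HH_b^n(f;\R)(\varphi) = \HH_b^*(\pi_1 \circ f;\R)(\varphi_1) \cup \HH_b^*(\pi_2 \circ f;\R)(\varphi_2) \cup \HH_b^*(\pi_3 \circ f;\R)(\varphi_3). \]
The step that requires a little care --- and which I expect to be the only one --- is to read off from the definition of $f$ that $\pi_1 \circ f = f_1$, that $\pi_2 \circ f = f_2$, and that $\pi_3 \circ f$ is the projection of $\Gamma = (\Lambda_1 * \Lambda_2) \times \Lambda_3$ onto its direct factor $\Lambda_3$. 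Granting these identifications and using associativity of the cup product,
\[ \HH_b^n(f;\R)(\varphi) = \bigl(\HH_b^*(f_1;\R)(\varphi_1) \cup \HH_b^*(f_2;\R)(\varphi_2)\bigr) \cup \HH_b^*(\pi_3 \circ f;\R)(\varphi_3), \]
and the bracketed factor is zero by hypothesis~(C). Hence $\HH_b^n(f;\R)(\varphi) = 0$.

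Combining the two parts, $\varphi$ is a nonzero class in $\HH_b^n(\Lambda;\R)$ in the kernel of $\HH_b^n(f;\R)$, so $\HH_b^n(f;\R)$ --- and therefore $\HH_b^*(f;\R)$ --- fails to be injective, and $\Lambda \notin \Lex$. I do not anticipate a real obstacle here: the content is entirely in choosing the cross product as the test class and in the elementary observation that it is a cup product of pullbacks along the maps $\pi_1, \pi_2$ which compose with $f$ to the $f_j$ of hypothesis~(C), so that~(C) forces the pushforward to vanish. Everything else is formal: naturality of cup products and Proposition~\ref{prop:eval}.
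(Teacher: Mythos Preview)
Your proof is correct and is essentially the same argument as the paper's: both take $\varphi = \varphi_1 \times \varphi_2 \times \varphi_3$, use Proposition~\ref{prop:eval} with hypothesis~(D) to show $\varphi \neq 0$, and use hypothesis~(C) to show $\HH_b^n(f;\R)(\varphi)=0$. The only cosmetic difference is that you unpack the cross product directly as a cup product of pullbacks along the projections~$\pi_j$ and identify $\pi_j \circ f = f_j$, whereas the paper phrases the same computation via the diagonal, writing $\HH_b^{n_1+n_2}((f_1,f_2)\circ\Delta;\R)(\varphi_1\times\varphi_2) = f_1^*\varphi_1 \cup f_2^*\varphi_2 = 0$ and then $0 \times \varphi_3 = 0$; your explicit check of surjectivity is also a small addition over the paper, which simply asserts it.
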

\begin{proof}
  Under these hypotheses, the class
  $\varphi \coloneqq \varphi_1 \times \varphi_2 \times \varphi_3 \in \HH_b^n(\Lambda;\R)$ 
  witnesses that $\HH_b^n(f;\R)$ is not injective:
  \begin{itemize}
  \item The $\lone$-dectability~(D) and Proposition~\ref{prop:eval}
    imply $\varphi \neq 0$ in~$\HH_b^n(\Lambda;\R)$.
  \item Moreover, property~(C) and the relation between the
    cup and the cross product allow us to compute that
    \begin{align*}
      & 
      \HH_b^{n_1+n_2}\bigl((f_1, f_2) \circ \Delta;\R\bigr)
                      (\varphi_1 \times \varphi_2)
      \\
      & = \HH_b^{n_1+n_2}(\Delta;\R) \bigl(\HH_b^{n_1}(f_1;\R)(\varphi_1)
                     \times \HH_b^{n_2}(f_2;\R)(\varphi_2)\bigr)
      \\
      & = \HH_b^{n_1}(f_1;\R)(\varphi_1) \cup \HH_b^{n_2}(f_2;\R)(\varphi_2)
      \\
      & = 0.
    \end{align*}
    In particular, we obtain
    \begin{align*}
      \HH_b^n(f;\R)(\varphi)
      & = \HH_b^{n_1+n_2}\bigl((f_1, f_2) \circ \Delta;\R\bigr)
                     (\varphi_1 \times \varphi_2)
          \times \HH_b^{n_3}(\id_{\Lambda_3};\R)(\varphi_3)
      \\
      & = 0 \times \varphi_3
        = 0. 
    \end{align*}
  \end{itemize}
  Therefore, $\HH_b^n(f;\R)\colon \HH_b^n(\Lambda;\R) \longrightarrow
  \HH_b^n(\Gamma;\R)$ is \emph{not} injective.

  By construction, $f$ is an epimorphism. Therefore, the non-injectivity
  of~$\HH_b^n(f;\R)$ shows that $\Lambda$ is not in~$\Lex$.
\end{proof}

What are candidates for finding groups and classes as in
Theorem~\ref{thm:epi}\;? 
It is tempting to consider the following situation:
Let $n \in \{ 4 \} \cup \N_{\geq 6}$. We set~$n_1 \coloneqq 2$, $n_2 \coloneqq 2$,
and $n_3 \coloneqq n-4$.
\begin{itemize}
  \item
    If $n=4$, then we take~$\Lambda_3$ as the trivial group; then, 
    every~$\varphi_3 \in \HH_b^0(\Lambda_3 ;\R) \setminus \{0\}$ is
    $\lone$-detectable.

    If $n \geq 6$, then we choose $\Lambda_3$ as the fundamental group
    of an oriented closed connected hyperbolic $(n-4)$-manifold (such
    manifolds do exist); in view of
    Proposition~\ref{prop:eval}, there then also exists an
    $\lone$-detectable class~$\varphi_3 \in \HH_b^{n_3}(\Lambda_3;\R)$.
  \item Let $\Lambda_1 \coloneqq F_2$, $\Lambda_2 \coloneqq F_2$ and
    let $\varphi_1, \varphi_2 \in \HH_b^2(F_2;\R)$ be classes
    corresponding to non-trivial quasi-morphisms, e.g.,
    Brooks quasimorphisms or Rolli quasimorphisms.
    Then $\varphi_1$ and $\varphi_2$ are $\lone$-detectable
    (Proposition~\ref{prop:eval}).

    What about property~(C) from Theorem~\ref{thm:epi}\;? For~$j \in
    \{1,2\}$, the pull-back~$\HH_b^2(f_j;\R)(\varphi_j) \in \HH_b^4(F_2 *
    F_2;\R)$ unfortunately does \emph{not} obviously fall into one of
    the classes for which it is known that the cup products vanish
    (see, e.g., Proposition~\ref{prop:median_qms_are_no_brooks}).
\end{itemize}
In summary, if $\HH_b^2(f_1;\R)(\varphi_1) \cup \HH_b^2(f_2;\R)(\varphi_2)
\in \HH_b^4(F_4;\R)$ were known to be zero, then we would have a rich
class of examples of non-$\Lex$ groups.

 
\bibliographystyle{abbrv}
\bibliography{cuptreebib}

\end{document}